\documentclass[leqno,a4paper,10pt]{article}

\usepackage[utf8x]{inputenc}
\usepackage{amsfonts}
\usepackage{amsmath}\usepackage{amssymb}
\usepackage{amsthm}
\usepackage{mathrsfs}
\usepackage{latexsym}
\usepackage{graphicx}
\usepackage{bm}
\usepackage{inputenc}
\usepackage{enumitem}

\usepackage{hyperref}
\usepackage{graphicx}

\swapnumbers

\newtheorem{Lemma}{Lemma}[section]

\newtheorem{Theorem}[Lemma]{Theorem}

\theoremstyle{definition}

\newtheorem{Definition}[Lemma]{Definition}

\theoremstyle{remark}

\newtheorem{Remark}[Lemma]{Remark}

\newtheoremstyle{citing}
{3pt}
{3pt}
{\itshape}
{}
{\bfseries}
{.}
{.5em}
{\thmnote{#3}}
\theoremstyle{citing}


\newtheoremstyle{proof*}
{3pt}
{3pt}
{\rmfamily}
{}
{ \itshape}
{.}
{.5em}
{\thmnote{#3}}
\theoremstyle{proof*}
\newtheorem*{proof*}{}

\DeclareMathOperator{\restrict}{\llcorner}

\DeclareMathOperator{\Tan}{Tan}     
     
\DeclareMathOperator{\im}{im}       
\DeclareMathOperator{\Lip}{Lip}     
\DeclareMathOperator{\grad}{grad}  
\DeclareMathOperator{\dmn}{dmn}     
\DeclareMathOperator{\Nor}{Nor}
     
\DeclareMathOperator{\Der}{D}       
       
\DeclareMathOperator{\ap}{ap}

\DeclareMathOperator{\reach}{reach}

\DeclareMathOperator{\gr}{gr}

\DeclareMathOperator{\Dis}{Dis}

\newcommand{\Real}[1]{ \mathbf{R}^{#1}}
\newcommand{\Haus}[1]{ \mathscr{H}^{#1} }
\newcommand{\Leb}[1]{ \mathscr{L}^{#1} }
\newcommand{\rect}[1]{(\mathscr{H}^{#1},#1)}

\newcommand{\Hdensity}[3]{\bm{\Theta}^{#1}(\mathscr{H}^{#1}\restrict \,#2,#3 )}
\newcommand{\supLdensity}[3]{\bm{\Theta}^{*#1}(\mathscr{L}^{#1}\restrict \,#2,#3 )}
\newcommand{\Ldensity}[3]{\bm{\Theta}^{#1}(\mathscr{L}^{#1}\restrict \,#2,#3 )}

\title{Fine properties of the curvature of arbitrary closed sets}
\author{Mario Santilli\footnote{email: \texttt{mario.santilli@math.uni-augsburg.de}}\\ Augsburg Universit\"at}

\begin{document}
	\maketitle
	
	\begin{abstract}
Given an arbitrary closed set $ A $ of $ \Real{n} $, we establish the relation between the eigenvalues of the approximate differential of the spherical image map of $ A $ and the principal curvatures of $ A $ introduced by Hug-Last-Weil, thus extending a well known relation for sets of positive reach by Federer and Z\"ahle. Then we provide for every $ m = 1, \ldots , n-1 $ an integral representation for the support measure $ \mu_{m} $ of $ A $ with respect to the $ m $ dimensional Hausdorff measure. 

Moreover a notion of second fundamental form $Q_{A}$ for an arbitrary closed set $ A $ is introduced so that the finite principal curvatures of $ A $ correspond to the eigenvalues of $ Q_{A} $. We prove that the approximate differential of order $ 2 $, introduced in a previous work of the author, equals in a certain sense the absolutely continuous part of $ Q_{A} $, thus providing a natural generalization to higher order differentiability of the classical result of Calderon and Zygmund on the approximate differentiability of functions of bounded variation.
	\end{abstract}

\paragraph{\small MSC-classes 2010.}{\small 52A22, 53C65 (Primary); 28A75, 60D05 (Secondary).}
\paragraph{\small Keywords.}{\small Parallel sets, nearest point projection, approximate differentiability, second fundamental form, support measures, second order rectifiability.}

\section{Introduction}

\subsection*{Background}
The theory of curvature of arbitrary closed subsets of the Euclidean space, which finds its roots in the landmark paper of Federer \cite{MR0110078} on sets of positive reach, has been initiated by Stach\'o in \cite{MR534512} and continued by Hug-Last-Weil in \cite{MR2031455}. If $ A \subseteq \Real{n} $ is a closed set and $\bm{\delta}_{A}$ is the distance function from $ A $, these authors introduced the \emph{generalized normal bundle of $ A $}, 
\begin{equation*}
N(A) = (A \times \mathbf{S}^{n-1}) \cap \{ (a,u) :  \bm{\delta}_{A}(a+su)=s \; \textrm{for some $ s > 0 $}\}
\end{equation*}
and they observed that there exists a countable collection $A_{1},A_{2}, \ldots $ of closed sets of positive reach and compact boundary such that
\begin{equation*}
N(A) \subseteq \bigcup_{i=1}^{\infty}N(A_{i}).
\end{equation*}
On the basis of this fact, it follows that $N(A)$ is a countably $n-1$ rectifiable subset of $ \Real{n} \times \mathbf{S}^{n-1} $ and its $n-1$ dimensional approximate tangent plane coincides with that of one of the sets $ A_{n} $ at $ \Haus{n-1} $ a.e.\ $ (a,u) \in N(A) $. This observation allows to introduce the \emph{principal curvatures of $ A $}, 
\begin{equation}\label{principal curvatures HLW}
-\infty< \lambda_{A,1}(a,u) \leq \ldots \leq \lambda_{A,n-1}(a,u) \leq \infty,
\end{equation}
at $ \Haus{n-1} $ a.e.\ $ (a,u) \in N(A) $, using the notion of principal curvature for sets of positive reach introduced by Z\"ahle in \cite{MR849863}. The support measures $ \mu_{0}, \ldots ,\mu_{n-1} $ of $ A $ are then introduced by
\begin{equation}\label{explicit formula for support measures}
\mu_{i}(D) = \frac{1}{(n-i)\bm{\alpha}(n-i)} \int_{D}H_{n-i-1}d\Haus{n-1},
\end{equation}
whenever $ D \subseteq \Real{n} \times \mathbf{S}^{n-1} $ is an $ \Haus{n-1} $ measurable set such that the integral on the right side exists (finite or infinite). Here $ H_{j} $ denotes \emph{the $ j $-th symmetric function of the principal curvatures of $ A $},
\begin{equation}\label{symmetric functions}
H_{j} =  \sum_{\{l_{1}, \ldots , l_{j}\} \subseteq \{1, \ldots , n-1\}} \;  \bigg(\prod_{i=1}^{n-1}(1 + \lambda_{A,i}^{2})^{-1/2}\bigg)\prod_{i=1}^{j}\lambda_{A,l_{i}}.
\end{equation}
The main result of the theory, the \emph{Steiner formula} in \cite[Theorem 2.1]{MR2031455}, is phrased in terms of these support measures; see also \cite[Theorem 1]{MR3153586} where a corrected version of this formula is pointed out. Despite this important result, several basic questions in the theory remain undisclosed and it is our aim in this work to investigate some of them.

The theory of curvature for arbitrary closed sets has found applications so far in the study of random closed sets in stochastic geometry (see \cite[sections 7-8]{MR2031455}, \cite{MR2307063}) and in spatial statistics (see \cite{MR3153586}). On the other hand, the fact that this is a theory developed with no a priori assumptions on the structure of the sets (e.g.\ convex, positive reach, etc..), makes it certainly appealing in the study of singular surfaces arising as solutions of variational problems (e.g.\ varifolds). We will present these applications in subsequent works. 

\subsection*{Results of the present paper}
\paragraph{Relating the principal curvatures to the eigenvalues of the differential of the spherical image map.}
If $ A \subseteq \Real{n} $ is a closed set, let $ \bm{\xi}_{A} $ be the nearest point projection onto $ A $ and let $\bm{\nu}_{A} $ be the spherical image map, i.e.\ $\bm{\nu}_{A}(x) = \bm{\delta}_{A}(x)^{-1}(x-\bm{\xi}_{A}(x))$ for $ x \in \dmn \bm{\xi}_{A} \sim A $. If $ A $ is a set of positive reach then it is well known (Federer \cite[4.8]{MR0110078} and Z\"ahle \cite{MR849863}) that $ \bm{\nu}_{A} $ is differentiable with symmetric differential at $ \Leb{n} $ a.e.\ $ x \in \{ y : 0 < \bm{\delta}_{A}(y) < \reach(A)  \} $ and, denoting by $ \chi_{A,1}(x) \leq \ldots \leq \chi_{A,n-1}(x) $ the eigenvalues of $ \Der\bm{\nu}_{A}(x)|\{v : v \bullet \bm{\nu}_{A}(x) =0   \} $, it follows that the principal curvature $ \lambda_{A,i}(a,u) $ of $ A $ at $ \Haus{n-1} $ a.e.\ $ (a,u)\in N(A) $ is given by
\begin{equation}\label{relation between principal curvatures: positive reach}
\lambda_{A,i}(a,u)= \frac{\chi_{A,i}(a+ru)}{1-r\chi_{A,i}(a+ru)} \quad \textrm{for $ 0 < r < \reach(A) $;}
\end{equation}
in fact the right side does not depend on $ r \in (0, \reach(A)) $. On the other hand, it has been proved in \cite{MR0310150} that if $ A $ is an arbitrary closed subset of $ \Real{n} $ then a certain extension of the nearest point projection $ \bm{\xi}_{A} $ on $ \Real{n}$ is differentiable with symmetric differential at $ \Leb{n} $ almost every $ x \in \Real{n} $ (the nearest point projection is not well defined on $ \Real{n} $ unless $ A $ is convex). Therefore it is a natural question to understand if the principal curvatures of an arbitrary closed set introduced in \cite{MR2031455} can be realized by mean of a suitable extension of \eqref{relation between principal curvatures: positive reach}. We provide the answer in sections \ref{section: approx. diff of n.p.p.} and \ref{section: s.f.f.}, whose content we now briefly describe. The main purpose of section \ref{section: approx. diff of n.p.p.} is to analyse the set of approximate differentiability points of $ \bm{\xi}_{A} $ for an arbitrary closed set $ A $ and to describe the tangential and curvature properties of the level sets $ S(A,r)$ of the distance function $ \bm{\delta}_{A} $ in terms of $ \bm{\nu}_{A} $ and its approximate differential, see \ref{fine properties of level sets}. This is done introducing a meaningful reach-type function $ \rho(A, \cdot) $ in \ref{definition of rho} and analysing the behaviour of $ \bm{\xi}_{A} $ on the super-level sets of $ \rho(A, \cdot) $, see \ref{A lambda}. A first consequence of this analysis is contained in \ref{regular points}-\ref{approximate diff of nearest point pr}, where we provide a refined version of the differentiability theorem in \cite{MR0310150}. As a second consequence, we obtain in section \ref{section: s.f.f.} the answer to the question posed at the beginning of this paragraph, which we summarize in the following theorem.

\begin{Theorem}\label{relating principal curvatures}
	If $ A \subseteq \Real{n} $ is a closed set then $ \bm{\nu}_{A} $ is approximately differentiable with symmetric approximate differential at $ \Leb{n} $ a.e.\ $ x \in \Real{n} \sim A $ and, denoting by $ \chi_{A,1}(x) \leq \ldots \leq \chi_{A,n-1}(x) $ the eigenvalues of $ \ap\Der\bm{\nu}_{A}(x)|\{v : v \bullet \bm{\nu}_{A}(x) =0   \} $,
	\begin{equation*}
		\lambda_{A,i}(a,u)= \frac{\chi_{A,i}(a+ru)}{1-r\chi_{A,i}(a+ru)}
	\end{equation*}
	for $ \Haus{n-1} $ a.e.\ $ (a,u) \in N(A) $, for every $ 0 < r < \sup\{ s : \bm{\delta}_{A}(a+su) =s   \} $ and $ i = 1, \ldots , n-1 $.
\end{Theorem}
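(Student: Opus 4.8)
The plan is to parametrise the generalised normal bundle $N(A)$ by the map $\Psi:=(\bm\xi_A,\bm\nu_A)$, to compute its approximate differential in terms of $\ap\Der\bm\nu_A$, and to match the image of that differential with the approximate tangent plane of $N(A)$, whose description in terms of the $\lambda_{A,i}$ is — via the Hug--Last--Weil covering $N(A)\subseteq\bigcup_iN(A_i)$ by sets of positive reach — inherited from the classical theory. First I would record the approximate differentiability: by the refinement of \cite{MR0310150} obtained in \ref{regular points}--\ref{approximate diff of nearest point pr}, the extension of $\bm\xi_A$ is approximately differentiable with symmetric approximate differential at $\Leb n$ a.e.\ $x\in\Real n\sim A$; since $\bm\delta_A$ is Lipschitz with $\grad\bm\delta_A=\bm\nu_A$ on $\dmn\bm\xi_A\sim A$ and $\bm\nu_A=\bm\delta_A^{-1}(\mathrm{id}-\bm\xi_A)$, the maps $\bm\delta_A$, $\bm\nu_A$ and $\Psi$ are approximately differentiable at $\Leb n$ a.e.\ $x\in\Real n\sim A$. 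At such an $x$, the identity $\bm\nu_A=\grad\bm\delta_A$ makes $\ap\Der\bm\nu_A(x)$ a symmetric endomorphism; differentiating $|\bm\nu_A|^2\equiv1$ shows it maps into $\bm\nu_A(x)^\perp$, and since $\bm\delta_A$ is affine of slope one along the normal segment through $x$ it annihilates $\bm\nu_A(x)$, so $\ap\Der\bm\nu_A(x)|\{v:v\bullet\bm\nu_A(x)=0\}$ is a well-defined symmetric endomorphism with real eigenvalues $\chi_{A,1}(x)\le\dots\le\chi_{A,n-1}(x)$.

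Next comes the algebraic heart of the matter. Fix $x$ as above with $s:=\bm\delta_A(x)>0$, set $u:=\bm\nu_A(x)$ and $T:=\ap\Der\bm\nu_A(x)|u^\perp$. Differentiating $\bm\xi_A=\mathrm{id}-\bm\delta_A\bm\nu_A$ gives $\ap\Der\bm\xi_A(x)=\mathrm{id}-u\otimes u-s\,\ap\Der\bm\nu_A(x)$, hence $\ap\Der\bm\xi_A(x)(u)=0$ and $\ap\Der\bm\xi_A(x)|u^\perp=\mathrm{id}_{u^\perp}-sT$. Therefore $\ap\Der\Psi(x)(u)=0$, while $\ap\Der\Psi(x)|u^\perp$ is the injective linear map $v\mapsto\bigl((\mathrm{id}_{u^\perp}-sT)v,Tv\bigr)$; choosing an orthonormal basis $e_1,\dots,e_{n-1}$ of $u^\perp$ with $Te_j=\chi_{A,j}(x)e_j$, the image of $\ap\Der\Psi(x)$ is the $(n-1)$-plane
\begin{equation*}
W(x)=\operatorname{span}\bigl\{\bigl((1-s\chi_{A,j}(x))e_j,\ \chi_{A,j}(x)e_j\bigr):j=1,\dots,n-1\bigr\}\subseteq\Real n\times\Real n .
\end{equation*}

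Now I would transfer this to $N(A)$. The point $\Psi(x)$ belongs to $N(A)$, and $\Psi$ carries each level set $S(A,r)=\{\bm\delta_A=r\}$ into $N(A)$. Using the fine properties of the sets $S(A,r)$ and of $\bm\xi_A$ on the super-level sets of $\rho(A,\cdot)$ from \ref{A lambda}--\ref{fine properties of level sets} — namely that for $\Leb1$ a.e.\ $r>0$ the set $S(A,r)$ is countably $(n-1)$ rectifiable, that $\Psi$ restricted to it is locally Lipschitz and covers $\Haus{n-1}$-almost all of $N(A)$ with controlled multiplicity and the tangent-plane property, and the coarea factorisation of $\Leb n$ along $\bm\delta_A$ — one feeds in the $\Leb n$-null exceptional set of the previous steps to obtain: for $\Haus{n-1}$ a.e.\ $(a,u)\in N(A)$ and $\Leb1$ a.e.\ $r\in(0,R(a,u))$, with $R(a,u):=\sup\{s:\bm\delta_A(a+su)=s\}$, the point $x=a+ru$ is a point of approximate differentiability of $\Psi$ at which the approximate tangent plane $\Tan^{n-1}(\Haus{n-1}\restrict N(A),(a,u))$ equals $W(x)$. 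On the other hand, since $N(A)\subseteq\bigcup_iN(A_i)$ with the $A_i$ of positive reach and matching approximate tangent planes, the very definition of the $\lambda_{A,j}$ via Z\"ahle's normalisation exhibits that same plane as
\begin{equation*}
\Tan^{n-1}(\Haus{n-1}\restrict N(A),(a,u))=\operatorname{span}\bigl\{\bigl(e_j',\ \lambda_{A,j}(a,u)e_j'\bigr):j=1,\dots,n-1\bigr\}
\end{equation*}
for an orthonormal basis $e_1',\dots,e_{n-1}'$ of $u^\perp$, with the convention that $(e_j',\infty\cdot e_j')$ means $(0,e_j')$. Comparing these two descriptions of the same $(n-1)$-plane, and invoking the bound $\lambda_{A,j}(a,u)\ge -R(a,u)^{-1}$ (valid for $\Haus{n-1}$ a.e.\ $(a,u)$ because the open ball of radius $R(a,u)$ centred at $a+R(a,u)u$ does not meet $A$, and forcing $\chi_{A,j}(a+ru)\le r^{-1}$, so that the orderings of the eigenvalues correspond), I conclude
\begin{equation*}
\lambda_{A,j}(a,u)=\frac{\chi_{A,j}(a+ru)}{1-r\chi_{A,j}(a+ru)}
\end{equation*}
for every $j$ and $\Leb1$ a.e.\ $r\in(0,R(a,u))$. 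Finally the ``almost every $r$'' is upgraded to every $r$ by the transformation rule along normal segments: since $y+t\bm\nu_A(y)=\bm\xi_A(y)+(\bm\delta_A(y)+t)\bm\nu_A(y)$, for $0<\bm\delta_A(y)+t<R(\bm\xi_A(y),\bm\nu_A(y))$ the metric projection of $y+t\bm\nu_A(y)$ is again $\bm\xi_A(y)$ and $\bm\nu_A\circ(\mathrm{id}+t\bm\nu_A)=\bm\nu_A$ there; the chain rule for approximate differentials then gives $\ap\Der\bm\nu_A(y+tu)|u^\perp=\bigl(\ap\Der\bm\nu_A(y)|u^\perp\bigr)\bigl(\mathrm{id}_{u^\perp}+t\,\ap\Der\bm\nu_A(y)|u^\perp\bigr)^{-1}$ whenever the factor is invertible, and the bound $\lambda_{A,j}(a,u)\ge -R(a,u)^{-1}$ rules out focal points strictly inside the normal segment, so existence of $\ap\Der\bm\nu_A$ at one point $a+r_0u$ of the segment propagates to every point of it, with the stated formula, for every $0<r<R(a,u)$ and every $i$ (with the usual convention when $\lambda_{A,i}(a,u)=\infty$).

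The decisive step is the transfer through $\Psi$: establishing that the $\Leb n$-null exceptional set and the tangent-plane identity pass correctly through a parametrisation that is neither injective nor globally Lipschitz. This is precisely where the structural analysis of the level sets $S(A,r)$ and of the nearest point projection on the super-level sets of $\rho(A,\cdot)$ from \ref{A lambda}--\ref{fine properties of level sets} is indispensable — one needs the rectifiability of $S(A,r)$ for a.e.\ $r$, the good behaviour of $\Psi$ on these hypersurfaces, and the coarea factorisation of $\Leb n$ along $\bm\delta_A$, together with the quantitative estimate $\lambda_{A,j}(a,u)\ge -R(a,u)^{-1}$ that guarantees the absence of focal points in the interior of the normal segments. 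By contrast, the approximate differentiability and symmetry are the Rademacher and Calder\'on--Zygmund-type input of \ref{approximate diff of nearest point pr}, and the passage from eigenvalues to principal curvatures is the elementary algebra of parallel surfaces together with Z\"ahle's definition.
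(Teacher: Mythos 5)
Your proposal is correct and follows essentially the same route as the paper: it rests on the same ingredients, namely the a.e.\ symmetric approximate differentiability of $\bm{\xi}_{A}$ and the dilation propagation along normal segments from \ref{A lambda} and \ref{approximate diff of nearest point pr}, the level-set parametrization $\bm{\psi}_{A}|S_{\lambda}(A,r)$ of $N(A)$ with the comparison of $\Tan^{n-1}(\Haus{n-1}\restrict N(A),(a,u))$ against the image of $\ap\Der\bm{\psi}_{A}$ (this is exactly \ref{basic facts about s.f.f.}\eqref{basic facts about s.f.f. 1}--\eqref{basic facts about s.f.f. 2}), and the identification with the Hug--Last--Weil/Z\"ahle curvatures as in \ref{comparison with Hug et all.}, together with the elementary eigenvalue algebra of \ref{range of the ap diff of the n.p.p.} and \ref{representation of principal curvatures}. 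The only difference is presentational: you compare the two descriptions of the tangent plane directly instead of introducing $Q_{A}$ and the $\kappa_{A,i}$ as an intermediate object, and your ``chain rule along the normal segment'' is precisely the content of \ref{A lambda}\eqref{n.p.p.and dilations}, on which you explicitly rely.
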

The number $\sup\{ s : \bm{\delta}_{A}(a+su) =s   \}$ equals the \emph{reach function of $ A $} at $(a,u)$ introduced in \cite[p.\ 292]{MR3153586} and it naturally appears in the Steiner formula (see \cite[Theorem 1]{MR3153586}). Moreover we introduce a symmetric bilinear form (which we call \emph{second fundamental form of $ A $ at $ a $ in the direction $ u $})
\begin{equation}\label{intro: sff}
Q_{A}(a,u): T_{A}(a,u) \times T_{A}(a,u) \rightarrow \Real{},
\end{equation}
at $ \Haus{n-1} $ a.e.\ $ (a,u)\in N(A) $, whose eigenvalues coincide with the finite principal curvatures of $ A $. Here $ T_{A}(a,u) $ is a linear subspace of $ \Real{n} $ whose dimension can vary from $ 0 $ to $ n-1 $. The second fundamental form will be further investigated in sections \ref{section: curvature and strata} and \ref{section: relation with second order rectifiability}.

\paragraph{Integral representation of the support measures.}In section \ref{section: curvature and strata} we consider the following natural stratification of a closed set $ A $: for each $ 0 \leq m \leq n $, we define \textit{the $m$-th stratum of $A$} as
\begin{equation*}
A^{(m)} = A \cap \{ a : \dim \bm{\xi}_{A}^{-1}\{a\} = n-m \} =  A \cap \{ a :   0 < \Haus{n-m-1}(N(A,a)) < \infty \}
\end{equation*}
(recall that $ \bm{\xi}_{A}^{-1}\{a\}$ is a convex set for every $ a \in A $). The structure of this stratification has been investigated in \cite{2017arXiv170309561M}, where it is proved (notice \ref{characterization of the strata in terms of Normal bundle}) that $ A^{(m)} $ is always countably $ (\Haus{m},m) $ rectifiable of class $ 2 $, see \cite[4.12]{2017arXiv170309561M}. The main point here is to analyse the behaviour of the principal curvatures of $ A $ on each strata, see \ref{Area formula Gauss map:remark} and \ref{representation of support measures}\eqref{representation of support measures:2}. Then for each integer $ 1 \leq m \leq n-1 $ we obtain the following integral representation formula of the support measure $ \mu_{m} $ with respect to the $ m $ dimensional Hausdorff measure $ \Haus{m} $. For arbitrary closed sets this result appears to be known only if $ m = n-1 $, see \cite[4.1]{MR2031455} (see also \cite[5.5]{MR1742247} for the special case of sets of positive reach).
\begin{Theorem}(see \ref{representation of support measures})
If $ A \subseteq \Real{n} $ is a closed set, $ \mu_{0}, \ldots , \mu_{n-1} $ are the support measures of $ A $, $ 1 \leq m \leq n-1 $ is an integer, $ S $ is a countable union of Borel subsets with finite $ \Haus{m} $ measure and $ T \subseteq N(A)|S $ is $ \Haus{n-1} $ measurable then
	\begin{equation*}
	\mu_{m}(T) = \frac{1}{(n-m)\bm{\alpha}(n-m)}\int_{A^{(m)}} \Haus{n-m-1}\{v : (z,v) \in T \}d\Haus{m}z.
	\end{equation*}
\end{Theorem}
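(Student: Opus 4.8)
\begin{proof*}[Strategy of proof]
Unravelling the definition \eqref{explicit formula for support measures} of $ \mu_{m} $, the assertion is equivalent to the identity
\begin{equation*}
\int_{T} H_{n-m-1}\, d\Haus{n-1} = \int_{A^{(m)}} \Haus{n-m-1}\{ v : (z,v) \in T \}\, d\Haus{m}z ,
\end{equation*}
so the normalising constant plays no role. The plan is to obtain the right-hand side from the coarea formula for the Lipschitz projection $ \pi : N(A)\to A $, $ \pi(a,u)=a $, by identifying the $ m $-dimensional coarea factor of $ \pi|N(A)|A^{(m)} $ with $ H_{n-m-1} $.

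First I would reduce $ T $ to the stratum $ A^{(m)} $. Using the structure of the strata (see \ref{characterization of the strata in terms of Normal bundle}, \cite[4.12]{2017arXiv170309561M}, together with the analysis of the principal curvatures on the strata carried out in section \ref{section: curvature and strata}), for $ \Haus{n-1} $ a.e.\ $ (a,u)\in N(A) $ the base point $ a $ belongs to exactly one stratum $ A^{(l)} $ and exactly $ l $ of the curvatures $ \lambda_{A,1}(a,u),\dots,\lambda_{A,n-1}(a,u) $ are finite. Regrouping the general summand of \eqref{symmetric functions} as $ \prod_{i\notin L}(1+\lambda_{A,i}^{2})^{-1/2}\cdot\prod_{i\in L}\big[(1+\lambda_{A,i}^{2})^{-1/2}\lambda_{A,i}\big] $ with $ L=\{l_{1},\dots,l_{n-m-1}\} $, and using that $ (1+\lambda^{2})^{-1/2}\to 0 $ and $ (1+\lambda^{2})^{-1/2}\lambda\to 1 $ as $ \lambda\to\infty $, one sees that a summand can be non-zero only if $ L $ contains every index with infinite curvature; hence $ H_{n-m-1}=0 $ $ \Haus{n-1} $ a.e.\ on $ N(A)|A^{(l)} $ for $ l<m $, while for $ l=m $ only the summand $ L=\{\,i:\lambda_{A,i}=\infty\,\} $ survives and
\begin{equation*}
H_{n-m-1}(a,u)=\prod_{i\,:\,\lambda_{A,i}(a,u)<\infty}\big(1+\lambda_{A,i}(a,u)^{2}\big)^{-1/2}\qquad\text{for $\Haus{n-1}$-a.e.\ $(a,u)\in N(A)|A^{(m)}$.}
\end{equation*}
For $ l>m $, since $ S $ is a countable union of Borel sets of finite $ \Haus{m} $ measure, $ S\cap A^{(l)} $ has $ \sigma $-finite $ \Haus{m} $ measure and therefore $ \Haus{l}(S\cap A^{(l)})=0 $; applying the coarea formula to $ \pi|N(A)|A^{(l)} $, whose $ l $-dimensional coarea factor is a finite product of positive numbers and hence positive $ \Haus{n-1} $ a.e., one gets $ \Haus{n-1}\big(N(A)|(S\cap A^{(l)})\big)=0 $. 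Consequently both sides of the displayed identity are unchanged if $ T $ is replaced by $ T\cap(A^{(m)}\times\mathbf{S}^{n-1}) $, and I may assume $ T\subseteq N(A)|A^{(m)} $.

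Next I would compute the $ m $-dimensional coarea factor of $ \pi $ on $ N(A)|A^{(m)} $. By the description of the approximate tangent plane of $ N(A) $ in terms of the principal curvatures (as developed in the proof of Theorem \ref{relating principal curvatures}; cf.\ \cite{MR849863}), for $ \Haus{n-1} $ a.e.\ $ (a,u)\in N(A)|A^{(m)} $ there is an orthonormal basis $ b_{1},\dots,b_{n-1} $ of $ u^{\perp} $ of principal directions such that the $ n-1 $ vectors $ \big((1+\lambda_{i}^{2})^{-1/2}b_{i},\ \lambda_{i}(1+\lambda_{i}^{2})^{-1/2}b_{i}\big) $ (for $ \lambda_{i}<\infty $) and $ (0,b_{i}) $ (for $ \lambda_{i}=\infty $), with $ \lambda_{i}=\lambda_{A,i}(a,u) $, form an orthonormal basis of $ \Tan^{n-1}(\Haus{n-1}\restrict N(A),(a,u)) $. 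Since $ A^{(m)} $ is countably $ (\Haus{m},m) $ rectifiable and $ \pi $ maps $ N(A)|A^{(m)} $ into $ A^{(m)} $, the image of $ \ap\Der(\pi|N(A)|A^{(m)})(a,u) $ is $ \operatorname{span}\{b_{i}:\lambda_{i}<\infty\} $, which has dimension $ m $ and is contained in, hence equals, $ \Tan^{m}(\Haus{m}\restrict A^{(m)},a) $; as $ \pi $ carries the listed orthonormal basis vectors with $ \lambda_{i}<\infty $ to the pairwise orthogonal vectors $ (1+\lambda_{i}^{2})^{-1/2}b_{i} $ and the remaining ones to $ 0 $, the $ m $ non-zero singular values are exactly the numbers $ (1+\lambda_{i}^{2})^{-1/2} $ with $ \lambda_{i}<\infty $, so the coarea factor equals $ \prod_{\lambda_{i}<\infty}(1+\lambda_{i}^{2})^{-1/2}=H_{n-m-1}(a,u) $ by the previous step. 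Applying the coarea formula for the Lipschitz map $ \pi|N(A)|A^{(m)} $ with $ m $-rectifiable target $ A^{(m)} $ (after the usual localisation on Lipschitz patches) to the characteristic function of $ T $, and using that $ \pi^{-1}\{z\}=\{z\}\times N(A,z) $ is isometric to $ N(A,z)\subseteq\mathbf{S}^{n-1} $, yields
\begin{equation*}
\int_{T} H_{n-m-1}\, d\Haus{n-1}=\int_{A^{(m)}}\Haus{n-m-1}\big(T\cap\pi^{-1}\{z\}\big)\, d\Haus{m}z=\int_{A^{(m)}}\Haus{n-m-1}\{ v:(z,v)\in T\}\, d\Haus{m}z ,
\end{equation*}
which is the identity sought.

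The main obstacle is that the argument rests entirely on the structural input quoted at the start of the reduction step: one must know, $ \Haus{n-1} $ almost everywhere on $ N(A) $, both the exact number of finite principal curvatures on each stratum and the fibration/dimension information for $ N(A)|A^{(l)} $ over $ A^{(l)} $ — precisely what the analysis of section \ref{section: curvature and strata}, building on \cite{2017arXiv170309561M}, must supply. Granting this, what remains is the essentially linear-algebraic identification of the coarea factor with $ H_{n-m-1} $ and a routine, if slightly technical, application of the coarea formula for maps between rectifiable sets.
\end{proof*}
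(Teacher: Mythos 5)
Your overall route---run the coarea formula for $\mathbf{p}$ on the rectifiable set $N(A)$ and identify the $m$-dimensional coarea factor over $A^{(m)}$ with $H_{n-m-1}$---is exactly the paper's (it is the content of \ref{Area formula Gauss map} together with \ref{basic facts about s.f.f.}). However, your reduction step rests on a false structural claim: that for $\Haus{n-1}$ a.e.\ $(a,u)\in N(A)$ with $a\in A^{(l)}$ \emph{exactly} $l$ principal curvatures are finite, i.e.\ $\dim T_{A}(a,u)=l$. In general only the inequality $\dim T_{A}(a,u)\leq l$ holds (\ref{upper bound on the dimension of the tangent space}); the paper's own example \ref{Cantor function} gives a convex $A\subseteq\Real{2}$ and $T\subseteq A^{(1)}$ with $\Haus{1}(N(A)|T)>0$ and $T_{A}(a,u)=\{0\}$ a.e.\ on $N(A)|T$, so ``exactly $l$ finite'' fails on a set of positive measure. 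The step you derive from it for $l>m$---that the $l$-dimensional coarea factor is a.e.\ positive on $N(A)|A^{(l)}$, so that $\Haus{l}(S\cap A^{(l)})=0$ forces $\Haus{n-1}\big(N(A)|(S\cap A^{(l)})\big)=0$---is likewise false. For instance, with $g$ as in \ref{Cantor function} and $C$ self-similar with $s<1/2$, let $A\subseteq\Real{3}$ be the epigraph of $(x,y)\mapsto g(x)+g(y)$; then $\Bd A=A^{(2)}$, and if $E$ is the part of the graph lying over $C\times C$ then $\Haus{1}(E)=0$ (its Hausdorff dimension is at most $2s<1$), while $\Haus{2}(N(A)|E)>0$, because $\mathbf{q}[N(A)]$ has positive $\Haus{2}$ measure whereas $\mathbf{q}[N(A)|(\Bd A\sim E)]$ is contained in countably many smooth curves of $\mathbf{S}^{2}$. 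So with $S=E$, $m=1$, $l=2$, the portion of $N(A)$ you propose to discard as $\Haus{n-1}$-null is not null.

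What is true---and is all that is needed---is the weaker statement that the paper isolates as \ref{representation of support measures}\eqref{representation of support measures:2}: for $l>m$ one has $\kappa_{A,m}=\infty$, hence $H_{n-m-1}=0$, $\Haus{n-1}$ a.e.\ on $N(A)|(S\cap A^{(l)})$. This follows by applying the $m$-dimensional (not $l$-dimensional) coarea formula to $\mathbf{p}$ over $S\cap A^{(l)}$: the fibres $N(A,z)$ with $z\in A^{(l)}$ have dimension at most $n-l-1<n-m-1$, so the fibre integrals vanish, forcing the $m$-dimensional coarea factor, i.e.\ the rank of $\mathbf{p}$ on the approximate tangent planes, to be $<m$ a.e.\ there; this is where the hypothesis on $S$ enters, and the paper carries it out on the truncations $\bm{\psi}_{A}[S_{\lambda}(A,1/i)]$, letting $i\to\infty$, so as to work with sets to which the coarea formula applies. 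The same caveat affects your computation on $A^{(m)}$ itself: you may not assume exactly $m$ finite curvatures, but on the exceptional subset where fewer are finite both the $m$-dimensional coarea factor and $H_{n-m-1}$ vanish, so the identity ``coarea factor $=H_{n-m-1}$'' that you feed into the coarea formula still holds $\Haus{n-1}$ a.e.\ on $N(A)|A^{(m)}$. With these two repairs your argument coincides with the paper's proof of \ref{representation of support measures}\eqref{representation of support measures:3}.
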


\paragraph{Second order approximate differentiability.} Finally in section \ref{section: relation with second order rectifiability} we analyse the relation of the present notion of curvature with the notion of approximate curvature for second-order rectifiable sets introduced by the author in \cite{2017arXiv170107286S}. In the latter, second order rectifiable sets are characterized by the existence of the approximate differential of order $ 2 $ at almost every point (we refer to \cite[1.2]{2017arXiv170107286S} for a precise statement, which actually holds for all possible orders of rectifiability). In this section we complement this characterization with the following result:
\begin{Theorem}\label{agreement with classical second fundamental form: intro}(see \ref{ap diff for sets}, \ref{ap differentials} and \ref{agreement with classical second fundamental form})
	Let $ A \subseteq \Real{n} $ be a closed set, $ 1 \leq m \leq n-1 $  and let $ S \subseteq A $ be $ \Haus{m} $ measurable and $ \rect{m} $ rectifiable of class $ 2 $. Then there exists $ R \subseteq S $ such that $ \Haus{m}(S \sim R) = 0 $ and 
	\begin{equation*}
	\textstyle	\ap \Tan(S,a) = T_{A}(a,u) \quad \ap \Der^{2}S(a)(\tau, \upsilon) \bullet u = - Q_{A}(a,u)(\tau, \upsilon) 
	\end{equation*}
	for every $ \tau, \upsilon \in T_{A}(a,u) $ and for $ \Haus{n-1} $ a.e.\ $ (a,u) \in N(A)|R $.
\end{Theorem}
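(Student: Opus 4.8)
\emph{Step 1 (reduction to $ S=A^{(m)} $, locally a $ C^{2} $ submanifold).}
The plan is to reduce to the case in which $ S $ coincides, in a density sense, with the stratum $ A^{(m)} $ and the latter is a $ C^{2} $ submanifold near $ a $, and then to recover its second fundamental form from the approximate differential of $ \bm{\nu}_{A} $. Recall from \cite{2017arXiv170107286S} (see \ref{ap diff for sets}, \ref{ap differentials}) that $ \ap\Tan(S,\cdot) $ and $ \ap\Der^{2}S(\cdot) $ depend, at $ \Haus{m} $ a.e.\ point, only on the $ \Haus{m} $ density germ of $ S $, so that they agree with the corresponding objects of any other $ \rect{m} $ rectifiable set of class $ 2 $ at $ \Haus{m} $ a.e.\ point of the common part. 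Writing $ S \sim A^{(m)} \subseteq \big(\bigcup_{j\neq m}A^{(j)}\big)\cup\big(A\sim\bigcup_{j}A^{(j)}\big) $, the last set carries no $ \Haus{n-1} $ mass of $ N(A) $ by \cite{2017arXiv170309561M} (see \ref{characterization of the strata in terms of Normal bundle}); for $ j<m $ the stratum $ A^{(j)} $ is countably $ \rect{j} $ rectifiable, hence $ \Haus{m} $ null; and for $ j>m $, since over $ \Haus{j} $ a.e.\ point of $ A^{(j)} $ the fibre $ N(A,a) $ is $ (n-1-j) $ dimensional, $ N(A)|(S\cap A^{(j)}) $ has Hausdorff dimension at most $ m+(n-1-j)<n-1 $ and is $ \Haus{n-1} $ null. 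Therefore it suffices to produce $ Z\subseteq A^{(m)} $ with $ \Haus{m}(Z)=0 $ such that the two identities hold for $ \Haus{n-1} $ a.e.\ $ (a,u)\in N(A)|(A^{(m)}\sim Z) $, and to take $ R=S\sim\big(Z\cup\bigcup_{j<m}A^{(j)}\big) $. By \cite[4.12]{2017arXiv170309561M}, $ A^{(m)} $ is $ \rect{m} $ rectifiable of class $ 2 $, so for $ \Haus{m} $ a.e.\ $ a\in A^{(m)} $ there is an $ m $ dimensional $ C^{2} $ submanifold $ M $ with $ a\in M $ and $ \Haus{m}(A^{(m)}\sim M) $ of $ \Haus{m} $ density $ 0 $ at $ a $; for such $ a $ one has $ \ap\Tan(A^{(m)},a)=\Tan(M,a)=:P $ and $ \ap\Der^{2}A^{(m)}(a)(\tau,\upsilon)\bullet u=\mathrm{II}_{M}(a)(\tau,\upsilon)\bullet u $ for every unit $ u\perp P $, where $ \mathrm{II}_{M} $ is the classical second fundamental form of $ M $.

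\emph{Step 2 (the first identity, and standing facts).}
That $ P=T_{A}(a,u) $ for $ \Haus{n-1} $ a.e.\ $ (a,u)\in N(A)|A^{(m)} $ is the first asserted identity; it is contained in the analysis of the principal curvatures on the strata in section \ref{section: curvature and strata} (see \ref{Area formula Gauss map:remark}, \ref{representation of support measures}), where for $ \Haus{n-1} $ a.e.\ such $ (a,u) $ exactly $ m $ of the $ \lambda_{A,i}(a,u) $ are finite and their principal directions span $ \ap\Tan(A^{(m)},a) $. Fix such an $ (a,u) $ and an admissible $ r $, i.e.\ $ 0<r<\sup\{s:\bm{\delta}_{A}(a+su)=s\} $, and put $ x=a+ru $. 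From sections \ref{section: approx. diff of n.p.p.}--\ref{section: s.f.f.} we use, for $ \Haus{n-1} $ a.e.\ $ (a,u)\in N(A)|A^{(m)} $ and a.e.\ admissible $ r $: the map $ \bm{\nu}_{A} $ is approximately differentiable at $ x $; the endomorphism $ C:=\ap\Der\bm{\nu}_{A}(x) $ annihilates $ u $ and restricts to a self-adjoint endomorphism of $ u^{\perp} $ which, in the orthonormal basis of principal directions of $ A $ at $ (a,u) $, acts as $ \lambda_{A,i}(a,u)(1+r\lambda_{A,i}(a,u))^{-1} $ on $ T_{A}(a,u)=P $ and as $ r^{-1} $ on $ P^{\perp}\cap u^{\perp} $ (the directions of infinite principal curvature); since $ -1/r $ is never a finite principal curvature, $ \mathrm{id}_{P}+rQ^{\mathrm{op}} $ and $ \mathrm{id}_{P}-r(C|P) $ are invertible and $ C|P=Q^{\mathrm{op}}(\mathrm{id}_{P}+rQ^{\mathrm{op}})^{-1} $, where $ Q^{\mathrm{op}} $ is the self-adjoint endomorphism of $ P=T_{A}(a,u) $ associated with $ Q_{A}(a,u) $. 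Moreover, by the fine description of $ N(A) $ over $ A^{(m)} $ near $ (a,u) $ in sections \ref{section: approx. diff of n.p.p.}--\ref{section: curvature and strata} (and \cite{2017arXiv170309561M}), together with lower semicontinuity of $ (a',v)\mapsto\sup\{s:\bm{\delta}_{A}(a'+sv)=s\} $ at $ (a,u) $, there is a Lipschitz map $ n $ from a neighbourhood of $ a $ in $ M\cap A^{(m)} $ into $ \mathbf{S}^{n-1} $ with $ n(a)=u $, $ (a',n(a'))\in N(A) $ and $ \sup\{s:\bm{\delta}_{A}(a'+sn(a'))=s\}>r $ for every $ a' $ there; since $ A^{(m)} $ is tangent to $ M $ at $ \Haus{m} $ a.e.\ point, $ n(a')\bullet\tau=0 $ for $ \tau\in\Tan(M,a') $, and $ x(a'):=a'+r\,n(a') $ satisfies $ \bm{\xi}_{A}(x(a'))=a' $ and $ \bm{\nu}_{A}(x(a'))=n(a') $.

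\emph{Step 3 (differentiation and conclusion).}
Discarding from $ A^{(m)} $ a further $ \Haus{m} $ null set where $ n $, $ M $, or the bi-Lipschitz chart of a neighbourhood of $ x $ adapted to the fibres of $ N(A) $ misbehaves, and composing $ \bm{\nu}_{A}\circ x=n $ with that chart so as to use the $ \Leb{n} $ approximate differentiability of $ \bm{\nu}_{A} $ at $ x $ along the $ m $ dimensional image $ x(M) $, we differentiate $ \bm{\nu}_{A}\circ x=n $ at $ a $ and obtain $ C\circ\Der x(a)=\Der n(a) $ on $ P $. Write $ \Der n(a)\tau=\beta\tau+\zeta(\tau) $ with $ \beta\tau\in P $ and $ \zeta(\tau)\in P^{\perp}\cap u^{\perp} $ (legitimate since $ \Der n(a)\tau\bullet u=0 $ and $ u\perp P $); the Weingarten relation gives $ \mathrm{II}_{M}(a)(\tau,\upsilon)\bullet u=-\beta\tau\bullet\upsilon $ for $ \tau,\upsilon\in P $. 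Since $ \Der x(a)\tau=\tau+r\,\Der n(a)\tau=(\mathrm{id}_{P}+r\beta)\tau+r\,\zeta(\tau) $ and $ C $ acts as recorded in Step 2, the components in $ P^{\perp} $ cancel and those in $ P $ give $ (C|P)(\mathrm{id}_{P}+r\beta)=\beta $, i.e.\ $ C|P=(\mathrm{id}_{P}-r(C|P))\,\beta $, whence $ \beta=(\mathrm{id}_{P}-r(C|P))^{-1}(C|P) $. Inserting $ C|P=Q^{\mathrm{op}}(\mathrm{id}_{P}+rQ^{\mathrm{op}})^{-1} $ gives $ \mathrm{id}_{P}-r(C|P)=(\mathrm{id}_{P}+rQ^{\mathrm{op}})^{-1} $, hence $ \beta=(\mathrm{id}_{P}+rQ^{\mathrm{op}})(C|P)=Q^{\mathrm{op}} $. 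Therefore
\begin{equation*}
\ap\Der^{2}A^{(m)}(a)(\tau,\upsilon)\bullet u=\mathrm{II}_{M}(a)(\tau,\upsilon)\bullet u=-\beta\tau\bullet\upsilon=-Q_{A}(a,u)(\tau,\upsilon)
\end{equation*}
for all $ \tau,\upsilon\in P=T_{A}(a,u) $, and transferring this back from $ A^{(m)} $ to $ S $ by Step 1 completes the argument.

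\emph{Main obstacle.}
The displayed computation is forced once two structural facts about $ N(A) $ near $ \Haus{n-1} $ a.e.\ $ (a,u)\in N(A)|A^{(m)} $ are available: that $ \bm{\xi}_{A}\restrict S(A,r) $ maps, up to $ \Haus{m} $ density one, onto $ A^{(m)} $ near $ a $ (so that $ T_{A}(a,u)=\ap\Tan(A^{(m)},a) $ and $ \bm{\nu}_{A}\circ x=n $ holds on a genuine neighbourhood in $ M $), and that $ N(A) $ admits near $ (a,u) $ a fibration over $ A^{(m)} $ carrying a Lipschitz normal section with a uniform lower bound on the reach function, which is what lets the $ \Leb{n} $ approximate differentiability of $ \bm{\nu}_{A} $ at $ x $ be transported to the $ m $ dimensional surface $ x(M) $ through a bi-Lipschitz ambient chart. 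These are precisely the outcomes of the fine study of the super-level sets of $ \rho(A,\cdot) $ and of the level sets $ S(A,r) $ in sections \ref{section: approx. diff of n.p.p.}--\ref{section: curvature and strata} (and of \cite{2017arXiv170309561M}); granting them, what remains is the routine bookkeeping of intersecting the various $ \Haus{m} $ a.e.\ differentiability sets so that all the maps involved are differentiable at the same points.
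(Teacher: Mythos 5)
The decisive gap is in Step 1, in your treatment of the higher strata. You claim that for $j>m$ the set $N(A)|(S\cap A^{(j)})$ is $\Haus{n-1}$ null because its Hausdorff dimension is at most $m+(n-1-j)$. Neither the argument nor the conclusion is correct: the Hausdorff dimension of a set fibred over a base is not bounded by the dimension of the base plus the dimension of the fibres, and $\Haus{m}$ null subsets of $A$ can carry positive $\Haus{n-1}$ measure of $N(A)$ --- this is precisely the phenomenon of \ref{Cantor function}, and it has a three-dimensional version that breaks your reduction. Let $g,\tilde g$ be primitives of Cantor-type functions built on compact sets $C,\tilde C\subseteq\Real{}$ so small that $\Haus{1}(C\times\tilde C)=0$, let $A=\Real{3}\cap\{(x,y,z):z\ge g(x)+\tilde g(y)\}$ and let $T$ be the part of $\Bd A$ lying over $C\times\tilde C$. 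Since $\Bd A$ is a convex $C^{1}$ graph, $T\subseteq A^{(2)}$ and $\Haus{1}(T)=0$; but the normals over $T$ are proportional to $(g'(x),\tilde g'(y),-1)$ with $(g'(x),\tilde g'(y))$ filling a full square, so $\mathbf{q}[N(A)|T]$ has positive $\Haus{2}$ measure and hence $\Haus{2}(N(A)|T)>0$, while $\dim T_{A}(a,u)=0$ for $\Haus{2}$ a.e.\ $(a,u)\in N(A)|T$ by \ref{representation of support measures}\eqref{representation of support measures:2}. Taking $m=1$ and $S\supseteq T$, your prescription $R=S\sim(Z\cup\bigcup_{j<m}A^{(j)})$ with $Z\subseteq A^{(m)}$ keeps $T$ inside $R$, and on the positive-measure set $N(A)|T$ the asserted identity $\ap\Tan(S,a)=T_{A}(a,u)$ fails. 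The same oversight recurs every time you pass from ``for $\Haus{m}$ a.e.\ $a$'' to ``for $\Haus{n-1}$ a.e.\ $(a,u)\in N(A)|R$'': an $\Haus{m}$ null exceptional set of base points may carry positive normal measure, so it must be removed from $R$ explicitly (which the statement permits), never argued to be invisible in $N(A)$. Note also that you cannot instead delete $S\cap A^{(j)}$, $j>m$, from $R$, since it may have positive $\Haus{m}$ measure (a curve on the smooth part of a convex surface, say), so a correct proof has to treat all strata at once rather than reduce to $A^{(m)}$; your Step 2 claim that the first identity over $A^{(m)}$ ``is contained in section \ref{section: curvature and strata}'' is likewise not available --- that section only gives $\dim T_{A}(a,u)\le m$ there, and the identification of $T_{A}$ with the approximate tangent plane is part of what is to be proved.

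Beyond this, Steps 2--3 rest on structural inputs that are nontrivial and not established in the paper: a Lipschitz normal section $n$ on a full neighbourhood of $a$ in $M\cap A^{(m)}$ with $n(a)=u$, $(a',n(a'))\in N(A)$ and a uniform lower bound on the reach function, and the transport of the $\Leb{n}$ approximate differentiability of $\bm{\nu}_{A}$ at $x=a+ru$ to differentiation along the $m$ dimensional image $x(M)$ (an $\Leb{n}$ approximate differential gives no control along a prescribed lower-dimensional set, and the ``bi-Lipschitz chart adapted to the fibres'' you invoke is not constructed anywhere). The paper's proof avoids both issues and all strata considerations: it covers $S$ by $C^{2}$ submanifolds $M_{i}$, sets $R_{i}=A\cap M_{i}\cap\{a:N(A,a)\subseteq\Nor(M_{i},a)\}$ (a set of full $\Haus{m}$ measure in $A\cap M_{i}$), observes that $N(A)|R_{i}$ is contained in the $C^{1}$ unit normal bundle of $M_{i}$, identifies $\Tan^{n-1}$ of $\Haus{n-1}\restrict N(A)|R_{i}$ with the tangent planes of that bundle and with $\Tan^{n-1}(\psi,\cdot)$ from \ref{basic facts about s.f.f.}, and then reads off $T_{A}$ and $Q_{A}=-\mathbf{b}_{M_{i}}\bullet u$ from \ref{basic facts about s.f.f.}\eqref{basic facts about s.f.f. 2} and \ref{remark on s.f.f for smooth submanifolds}; crucially these identifications are stated and proved $\Haus{n-1}$ a.e.\ on $N(A)|R_{i}$, which is exactly the form of statement your proposal needs but does not secure. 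If you wish to keep your Weingarten-type computation, you would at minimum have to route it through \ref{basic facts about s.f.f.} or Lemma \ref{agreement with classical second fundamental form: lemma} rather than through the unproved section and chain-rule claims.
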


In other words this theorem asserts that "the absolutely continuous part of the second fundamental form $ Q_{A} $, when restricted on a second order rectifiable subset $ S $ of $ A $, coincides with the approximate differential of order $ 2 $ of $ S $". This result has an interesting analogy with the classical theorem of Calderon and Zygmund asserting that the absolutely continuous part of the total differential of a function of bounded variation coincides with its approximate gradient. This analogy is further strengthened if we look at the primitive $ g $ of the Cantor function $ f $ ($ f $ is a function of bounded variation whose total differential cannot be fully described by the approximate derivative), see \ref{Cantor function}. The epigraph of $ g $ is a closed convex set $ A $ of $ \Real{2} $ which admits a subset $ T \subseteq \partial A $ such that $ \Haus{1}(N(A)|T) > 0 $ and 
\begin{equation*}
	T_{A}(a,u) = \{0\} \quad \textrm{for $ \Haus{1} $ a.e.\ $ (a,u) \in N(A)|T $.}
\end{equation*}
It follows that the second fundamental form cannot be fully described by the approximate differential of order $ 2 $. 

\paragraph{Acknowledgements.} The author thanks Prof.\ Ulrich Menne for conversations concerning the content of section \ref{section: relation with second order rectifiability}; moreover the author thanks the referee for his or her careful reading of the manuscript.

\section{Preliminaries}\label{section: Preliminaries}

The notation and the terminology agree with \cite[pp.\ 669--676]{MR0257325}. Let $ m $ be a non negative integer. The symbols $ \mathbf{U}(a,r) $ and $ \mathbf{B}(a,r) $ denote the open and closed ball with centre $ a $ and radius $ r $ (\cite[2.8.1]{MR0257325}); $ \mathbf{S}^{m} $ is the $ m $ dimensional unit sphere in $ \Real{m+1} $ (\cite[3.2.13]{MR0257325}); $ \Leb{m} $ and $ \Haus{m} $ are the $ m $ dimensional Lebesgue and Hausdorff measure (\cite[2.10.2]{MR0257325}); $ \bm{\alpha}(m) = \Leb{m}(\mathbf{U}(0,1)) $; given a measure $ \mu $, we denote by $ \bm{\Theta}^{\ast m}(\mu, \cdot) $, $ \bm{\Theta}_{\ast}^{m}(\mu, \cdot) $ and $ \bm{\Theta}^{m}(\mu, \cdot) $ the $ m $ dimensional densities of $ \mu $ (\cite[2.10.19]{MR0257325}); $ \mathbf{G}(m,k) $ is the Grassmann manifold of all $ k $ dimensional subspaces in $ \Real{m} $ (\cite[1.6.2]{MR0257325}). Moreover, given a function $ f $, we denote by $ \dmn f $ and $ \im f $ the domain and the image of $ f $. The symbol $ \bullet $ denotes the standard inner product of $\Real{m}$. If $T$ is a linear subspace of $\Real{m}$, then $T_{\natural} : \Real{m} \rightarrow \Real{m}$ is the orthogonal projection onto $T$ and
\begin{equation*}
	T^{\perp} = \Real{m} \cap \{ v : v \bullet u =0 \; \textrm{for $u \in T$} \}.
\end{equation*}
If $X$ and $Y$ are sets, $ Z \subseteq X \times Y $ and $S \subseteq X$, then
\begin{equation*}
	Z | S = Z \cap \{ (x,y) : x \in S  \}.
\end{equation*}
The maps $ \mathbf{p}, \mathbf{q} : \Real{m} \times \Real{m} \rightarrow \Real{m} $ are defined by
\begin{equation*}
	\mathbf{p}(x,v)= x, \quad \mathbf{q}(x,v) = v.
\end{equation*}
To treat the classical concept of rectifiable sets we adopt the terminology introduced in \cite[3.2.14]{MR0257325}. Moreover, if $A \subseteq \Real{n}$ we say that \textit{$A$ is countably $\rect{m}$ rectifiable of class $2$} if $A$ can be $\Haus{m}$ almost covered by the union of countably many $m$ dimensional submanifolds of class $2$ of $\Real{n}$; we omit the prefix ``countably'' when $\Haus{m}(A)<\infty$. We refer to \cite[3.1.21]{MR0257325} for the notions of \emph{tangent and normal cone of a set}; moreover, given a measure $ \mu $ and a positive integer $ m $, the \emph{approximate tangent cone $ \Tan^{m}(\mu, \cdot) $} is defined as in \cite[3.2.16]{MR0257325}. Finally, if $X$ and $Y$ are metric spaces and $f : X \rightarrow Y$ is a function such that $f$ and $f^{-1}$ are Lipschitzian functions, then we say that $f$ is a \emph{bi-Lipschitzian homeomorphism.}

\subsection*{Second fundamental form and normal bundle of submanifolds of class $ 2 $}

\begin{Definition}
	Suppose $ 1 \leq m \leq n $ are integers, $ M $ is an $ m $ dimensional submanifold of class $ 2 $ of $ \Real{n} $ and $ a \in M $. Then we call second fundamental form of $ M $ at $ a $ the unique symmetric $ 2 $ linear function
	\begin{equation*}
	\mathbf{b}_{M}(a) : \Tan(M,a) \times \Tan(M,a) \rightarrow \Nor(M,a)
	\end{equation*}
	such that $ \mathbf{b}_{M}(a)(u,v) \bullet \nu(a) = - \Der \nu(a)(u) \bullet v $ for each $ u, v \in \Tan(M,a) $, whenever $ \nu : M \rightarrow \Real{n} $ is of class $ 1 $ relative to $ M $ with $ \nu(x) \in \Nor(M,x) $ for every $ x \in M $.
\end{Definition}

The following lemma is well known in differential geometry.

\begin{Lemma}\label{normal bundle of smooth submanifolds}
	Let $ M \subseteq \Real{n} $ be an $ m $ dimensional submanifold of class $ 2 $ and let $N=\Nor(M) \cap (M \times \mathbf{S}^{n-1}) $.
	
	Then $N$ is an $n-1$ dimensional submanifold of class $1$ of $\Real{n} \times \Real{n} $ and, if $(a,u)\in N$ then $\Tan(N,(a,u))$ is the set of $(\tau, v + \Der \nu(a)(\tau) )$ such that \mbox{$\tau \in \Tan(M,a)$,} $v \in \Nor(M,a)$ is orthogonal to $u$ and $ \nu $ is a unit normal vector field of class $1$ on an open neighborhood of $a$ such that $\nu(a)=u$.
\end{Lemma}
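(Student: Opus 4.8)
The plan is to work locally and build an explicit parametrization of $N = \Nor(M)\cap(M\times\mathbf{S}^{n-1})$ near a given point $(a,u)$, then compute its tangent space by differentiating that parametrization. First I would fix $(a,u)\in N$ and choose a local $2$-parameter description of $M$: a $C^2$ embedding $\varphi : U \to \Real{n}$ with $U\subseteq\Real{m}$ open, $\varphi(p_0)=a$, together with a $C^1$ orthonormal frame $\nu_1,\dots,\nu_{n-m}$ of the normal bundle over $\varphi(U)$ (such a frame exists after shrinking $U$, since the normal bundle of a $C^2$ submanifold is a $C^1$ vector bundle, hence trivial over a small enough coordinate neighborhood). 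Then the map
\begin{equation*}
	\Psi : U \times \mathbf{S}^{n-m-1} \longrightarrow N, \qquad \Psi(p,\omega) = \Bigl(\varphi(p),\ \textstyle\sum_{j=1}^{n-m}\omega_j\,\nu_j(\varphi(p))\Bigr)
\end{equation*}
is a $C^1$ local parametrization of $N$ near $(a,u)$; it is injective with injective differential because $D\varphi$ has rank $m$ and the $\nu_j$ are linearly independent at each point, and it is a homeomorphism onto a relative neighborhood of $(a,u)$ in $N$. Since $\dim(U\times\mathbf{S}^{n-m-1}) = m + (n-m-1) = n-1$, this shows $N$ is an $(n-1)$-dimensional $C^1$ submanifold of $\Real{n}\times\Real{n}$.

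Next I would compute $\Tan(N,(a,u))$ as the image of $D\Psi$ at the point $(p_0,\omega_0)$ with $\Psi(p_0,\omega_0)=(a,u)$. Differentiating in the $U$-directions gives vectors of the form $\bigl(D\varphi(p_0)(h),\ \sum_j \omega_{0,j}\,D(\nu_j\circ\varphi)(p_0)(h)\bigr)$ for $h\in\Real{m}$; differentiating in the $\mathbf{S}^{n-m-1}$-directions gives $\bigl(0,\ \sum_j \dot\omega_j\,\nu_j(a)\bigr)$ where $(\dot\omega_j)$ ranges over $T_{\omega_0}\mathbf{S}^{n-m-1}$, i.e.\ over all normal vectors at $a$ orthogonal to $u=\sum_j\omega_{0,j}\nu_j(a)$. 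Setting $\nu := \sum_j \omega_{0,j}\,\nu_j$ (a $C^1$ unit normal field near $a$ with $\nu(a)=u$) and $\tau := D\varphi(p_0)(h)\in\Tan(M,a)$, the first family becomes $\bigl(\tau,\ \Der(\nu\circ\varphi)(p_0)(h) + \sum_j(\,\cdot\,)\bigr)$; here one must be slightly careful, since varying $p$ changes both the coefficients (no, the $\omega_{0,j}$ are fixed) and the frame vectors, so in fact $\sum_j\omega_{0,j}\,D(\nu_j\circ\varphi)(p_0)(h) = D(\nu\circ\varphi)(p_0)(h) = \Der\nu(a)(\tau)$ directly. Adding the two families and observing that $\Der\nu(a)(\tau)\in\Real{n}$ is unconstrained modulo this formula, while the second component can absorb any $v\in\Nor(M,a)$ with $v\bullet u = 0$, yields exactly the claimed description: $\Tan(N,(a,u))$ is the set of $(\tau,\ v+\Der\nu(a)(\tau))$ with $\tau\in\Tan(M,a)$, $v\in\Nor(M,a)$, $v\bullet u=0$.

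The remaining point is independence of the auxiliary choices, namely that the description does not depend on which $C^1$ unit normal extension $\nu$ of $u$ is used: if $\tilde\nu$ is another such field, then $(\tilde\nu-\nu)(a)=0$, and since $\nu\bullet\nu\equiv 1$ one has $\Der\nu(a)(\tau)\bullet u = 0$, so $\Der\nu(a)(\tau)$ and $\Der\tilde\nu(a)(\tau)$ differ by a vector orthogonal to $u$ that lies in the span of the range — more precisely, decomposing $\Der\nu(a)(\tau)$ into tangential and normal parts and noting the set $\{v\in\Nor(M,a): v\bullet u=0\}$ is a free parameter in the formula, the tangential part is common (it is $-$the Weingarten map $W_u(\tau)$, independent of the extension) and the normal-orthogonal part is arbitrary anyway. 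I expect the only mildly delicate step to be this bookkeeping of which quantities are genuinely intrinsic versus which are free parameters in the parametrization; the construction of the local $C^1$ normal frame and the rank computation for $D\Psi$ are routine. Everything is local, so no global triviality of $\Nor(M)$ is needed.
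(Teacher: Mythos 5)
Your argument is correct and is essentially the paper's own proof: the paper likewise parametrizes $N$ near $(a,u)$ by $(M\cap Z)\times\mathbf{S}^{n-m-1}$ via a local $C^{1}$ orthonormal normal frame, and the tangent-space description follows by differentiating that parametrization exactly as you do. Your extra paragraph checking independence of the chosen unit normal extension $\nu$ (tangential part intrinsic, $\Der\nu(a)(\tau)\bullet u=0$, normal part absorbed into $v$) is a correct and welcome elaboration of what the paper leaves implicit.
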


\begin{proof}
	The conclusion is a direct consequence of the fact that, using a normal frame of $M$ in an open neighborhood $Z$ of $a$, we can locally parametrize $N$ at $(a,u)$ using the product manifold $(M \cap Z) \times \mathbf{S}^{n-m-1}$. 
\end{proof}
\begin{Remark}\label{remark on s.f.f for smooth submanifolds}
	If $(a,u)\in N$, $\tau \in \Tan(M,a)$, $\tau_{1} \in \Tan(M,a)$ and $\sigma_{1} \in \Real{n}$ is such that $(\tau_{1}, \sigma_{1}) \in \Tan(N,(a,u)) $, then
	\begin{equation*}
	\tau \bullet \sigma_{1} = - \mathbf{b}_{M}(a)(\tau, \tau_{1}) \bullet u.
	\end{equation*}
\end{Remark}

\subsection*{Approximate differentiability for functions and sets}

First we recall the following measure-theoretic notions of limit and differentiability for functions, which play a key role in section \ref{section: approx. diff of n.p.p.}.

\begin{Definition}\label{approximate limits}
	Let $ f $ be a function mapping a subset of $ \Real{n} $ into some set $ Y $ and let $ a \in \Real{n} $. If $ Y $ is a normed vector space, a point $ y \in Y $ is \emph{the approximate limit of $ f $ at $ a $} if and only if
	\begin{equation*}
	\Ldensity{n}{\Real{n} \sim \{ x : | f(x) - y |  \leq \epsilon \}}{a} = 0 \quad \textrm{for every $ \epsilon > 0 $}
	\end{equation*}
	and we denote it by $ \ap\lim_{x \to a} f(x) $. If $ Y = \overline{\Real{}} $, a point $ t \in \overline{\Real{}} $ is \emph{the approximate lower limit of $ f $ at $ a $} [\emph{the approximate upper limit of $ f $ at $ a $}] if and only if
	\begin{equation*}
	t = \sup\{ s: \Ldensity{n}{ \{ x :  f(x) < s \}}{a} = 0 \}
	\end{equation*}
	\begin{equation*}
	\big[t = \inf\{ s: \Ldensity{n}{ \{ x :  f(x) > s \}}{a} = 0 \}\big]
	\end{equation*}
	and we denote it by $ \ap \liminf_{x \to a} f(x) $ [$ \ap \limsup_{x \to a} f(x) $].
\end{Definition}

\begin{Definition}\label{approximate differentiability for functions}
	Let $ n \geq 1 $, $ \nu \geq 1 $ and $ k \geq 0 $ be integers, $ A \subset \mathbf{R}^{n} $, $ f : A \rightarrow \Real{\nu} $ and $ a \in \mathbf{R}^{n} $.
	
	We say that $ f $ is \textit{approximately differentiable of order $ k $ at $ a $} if there exists a polynomial function $ P :\Real{n} \rightarrow \Real{\nu} $ of degree at most $ k $ such that $ P(a)= f(a) $ if $ a \in A $, and
	\begin{equation*}
	\ap\lim_{x \to a} \frac{|f(x)-P(x)|}{|x-a|^{k}}=0.
	\end{equation*}
	We let $ \ap \Der^{i}f(a) = \Der^{i}P(a) $ for $ i = 1, \ldots , k $.
\end{Definition}

\begin{Remark}\label{approximate differentiability for functions remark}
The following statement follows immediately from \ref{approximate limits} and \ref{approximate differentiability for functions}. \emph{Suppose $ n,\nu, k, A, f, a $ are as in \ref{approximate differentiability for functions} and $ B \subseteq A $. Then $ f|B $ is approximately differentiable of order $ k $ at $ a $ if and only if $ f $ is approximately differentiable of order $ k $ at $ a $ and $ \Ldensity{n}{\Real{n} \sim B}{a} =0 $. In this case $ \ap \Der^{i}(f|B)(a) = \ap \Der^{i}f(a) $ for $ i = 1, \ldots , k $.}
\end{Remark}

We recall now from \cite[3.8, 3.19, 3.20]{2017arXiv170107286S} the notion of approximate differentiability for sets.
\begin{Definition}\label{ap diff for sets}
	Let $ n \geq 1 $ and $ k \geq 1 $ be integers, $ A \subseteq \Real{n} $, $ a \in \Real{n} $. We say that $ A $ is \textit{approximately differentiable of order $ k $ at $ a $} if and only if there exist an integer $ 1 \leq m \leq n $, \mbox{$ T \in \mathbf{G}(n,m) $} and a polynomial function $ P : T \rightarrow T^{\perp} $ of degree at most $ k $ such that $ P(0) = 0 $, $ \Der P(0) = 0 $ and the following two conditions hold:
	\begin{enumerate}
		\item \label{ap diff for sets: dimension} for every $ \epsilon> 0 $ there exists $ \eta > 0 $ such that 
		\begin{equation*}
		\Haus{m}(\mathbf{B}(z, \epsilon r) \cap \{ x-a : x \in A  \}) \geq \eta r^{m}
		\end{equation*}
		for every $ z \in T \cap \mathbf{B}(0,r) $ and $ 0 \leq r \leq \eta $,
		\item \label{ap diff for sets: approx by polynomials} for every $ \epsilon > 0 $,
		\begin{equation*}
		\lim_{r \to 0} \frac{ \Haus{m}\left( \{ x-a : x \in A  \} \cap \mathbf{B}(0,r) \cap \{ z: \bm{\delta}_{\gr(P)}(z) > \epsilon\,r^{k} \} \right) }{\bm{\alpha}(m)r^{m}  }  =0,
		\end{equation*}
		where $ \gr P = \{ \chi + P(\chi) : \chi \in T \} $.
	\end{enumerate}
\end{Definition}

\begin{Definition}\label{ap differentials}
	Let $ n $, $ k $, $ A $, $ a $, $ m $, $ T $ and $ P $ as in \ref{ap diff for sets}. Then we define 
	\begin{equation*}
	\ap \Tan(A,a) = T, \quad \ap \Nor(A,a) = T^{\perp},
	\end{equation*}
	\begin{equation*}
	\ap \Der^{k}A(a) = \Der^{k}(P \circ T_{\natural})(0).
	\end{equation*} 
\end{Definition}

\begin{Remark}
One can prove, using a standard density-argument, that if $ M $ is an $ m $ dimensional submanifold of class $ 1 $ [class $ 2 $] in $ \Real{n} $ and $ A \subseteq M $ is $ \Haus{m} $ measurable with $ \Haus{m}(A)< \infty $, then 
	\begin{equation*}
	\Tan(M,a) = \ap \Tan(A,a) \quad \textrm{for $ \Haus{m} $ a.e.\ $ a \in A $}
	\end{equation*}
	\begin{equation*}
	[\ap \Der^{2}A(a)|\ap\Tan(A,a)\times \ap \Tan(A,a) = \mathbf{b}_{M}(a) \quad \textrm{for $ \Haus{m} $ a.e.\ $ a \in A $.}]
	\end{equation*}
\end{Remark}

\begin{Remark}
For a set $ A \subseteq \Real{n} $ other notions of measure-theoretic tangent planes are well known, see \cite[1.3, 1.4]{2017arXiv170107286S}. If $ A $ is $ \Haus{m} $ measurable and $ \Haus{m}(A) < \infty $ then the sets of points where these tangent planes exist and belong to $ \mathbf{G}(n,m) $ are $ \Haus{m} $ almost equal to the set of points where $ \ap \Tan(A, \cdot) $ exists and belongs to $ \mathbf{G}(n,m) $.
\end{Remark}

\begin{Remark}
	A characterization of higher order rectifiable sets is obtained in \cite[3.23, 5.6]{2017arXiv170107286S} in terms of the approximate differentiability given in \ref{ap diff for sets}.
\end{Remark}

\subsection*{Level sets of distance function}

\begin{Definition}\label{definition: distance function}
	Let $ A \subseteq \Real{n} $ be a closed set. We define
	\begin{equation*}
	\bm{\delta}_{A}(x) = \inf\{|x-a| : a \in A   \} \quad \textrm{for $ x \in \Real{n} $,}
	\end{equation*}
	\begin{equation*}
	S(A,r) = \{ x : \bm{\delta}_{A}(x) = r  \} \quad \textrm{for $ r > 0 $.}
	\end{equation*}
\end{Definition}


In this paper we need the following result on the rectifiability properties of the level sets of $ \bm{\delta}_{A} $.


\begin{Theorem}\label{structure of level sets}
	Let $ A $ be a closed subset of $ \Real{n} $ and $ r > 0 $. 
	\begin{enumerate}
		\item\label{structure of level sets:1} If $ K \subseteq \Real{n} $ is compact then $ S(A,r) \cap K $ is $n-1$ rectifiable.
		\item\label{structure of level sets:3} $ S(A,r)$ is countably $ \rect{n-1} $ rectifiable of class $ 2 $.
	\end{enumerate}
\end{Theorem}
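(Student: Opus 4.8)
The plan is to reduce everything to the known theory of sets of positive reach together with the covering of the normal bundle recalled in the introduction. First I would fix $r>0$ and recall the elementary fact that $S(A,r)\subseteq \partial A_r$, where $A_r=\{x:\bm{\delta}_A(x)\le r\}$ is the $r$-parallel set of $A$; moreover the unit outer normal at a point $x\in S(A,r)$ admitting one is precisely $\bm{\nu}_A(x)=\bm{\delta}_A(x)^{-1}(x-\bm{\xi}_A(x))$, wherever the nearest point projection is single-valued. The key structural input is that the map $F_r\colon (a,u)\mapsto a+ru$ carries (a suitable subset of) $N(A)$ onto $S(A,r)$: indeed $x\in S(A,r)$ with $\bm{\xi}_A(x)=a$ gives $(a,\bm{\nu}_A(x))\in N(A)$ and $x=F_r(a,\bm{\nu}_A(x))$. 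Since $N(A)$ is countably $(n-1)$-rectifiable and $F_r$ is (locally) Lipschitz on the relevant piece, $S(A,r)$ is the image of a countably $(n-1)$-rectifiable set under a Lipschitz map, hence itself countably $(n-1)$-rectifiable; intersecting with a compact set $K$ and using that $S(A,r)\cap K$ has finite $\Haus{n-1}$ measure (a standard consequence of the Steiner/coarea estimates for parallel sets, or of the fact that $S(A,r)\cap K$ is a subset of the reduced boundary of a set of finite perimeter on compacta) gives part \eqref{structure of level sets:1}.

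For part \eqref{structure of level sets:3} I would invoke the Hug–Last–Weil covering $N(A)\subseteq\bigcup_i N(A_i)$ with the $A_i$ of positive reach and compact boundary. For a fixed $i$ and for $0<r<\reach(A_i)$, classical Federer theory tells us that the boundary $S(A_i,r)=\partial (A_i)_r$ is a $C^{1,1}$ hypersurface, and in fact the nearest point projection $\bm{\xi}_{A_i}$ restricted to $S(A_i,r)$ is bi-Lipschitz onto $\partial A_i$; more to the point, $S(A_i,r)$ is $\rect{n-1}$ rectifiable of class $2$ because the map $a\mapsto a+r\,\bm{\nu}$ parametrizing it over the rectifiable-of-class-$2$ set $N(A_i)$ is built from the (Lipschitz, hence a.e.\ differentiable) Gauss map, and a Lipschitz-graph refinement of Lusin's theorem upgrades this to a countable union of $C^2$ graphs. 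The subtlety is that the fixed $r$ we are interested in need not be smaller than $\reach(A_i)$. This is handled by a localisation: for each point $z\in S(A,r)$ with $\bm{\xi}_A(z)=a$, the pair $(a,\bm{\nu}_A(z))$ lies in some $N(A_i)$ and, by definition of $N(A)$, there is $s>0$ with $\bm{\delta}_A(a+tu)=t$ for $0<t\le s$; choosing $A_i$ appropriately (the construction in \cite{MR2031455} allows the $A_i$ to have arbitrarily large reach near any prescribed point of $N(A)$, or one rescales) one arranges $r<\reach(A_i)$ in a neighbourhood of $z$, so that $S(A,r)$ agrees near $z$ with $S(A_i,r)$, which is locally $C^2$-rectifiable. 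Covering $S(A,r)$ by countably many such neighbourhoods yields \eqref{structure of level sets:3}.

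I expect the main obstacle to be the bookkeeping in this last localisation step: one must ensure that the positive-reach approximants $A_i$ can simultaneously have reach exceeding the given $r$ near each point of $S(A,r)$ and still satisfy $N(A)\subseteq\bigcup_i N(A_i)$, so that no point of $S(A,r)$ escapes the covering. The cleanest route is probably to prove directly that, for $\Haus{n-1}$-a.e.\ $(a,u)\in N(A)$, the reach function $\sup\{s:\bm{\delta}_A(a+su)=s\}$ is positive and the map $F_r$ is approximately differentiable along $N(A)$ with a bound on its approximate Jacobian that degenerates only where $r$ meets a principal radius of curvature — this is exactly the content foreshadowed by Theorem \ref{relating principal curvatures} and formula \eqref{relation between principal curvatures: positive reach} — and then to note that the bad set (where $1-r\chi_{A,i}=0$ for some $i$) is $\Haus{n-1}$-null for a.e.\ $r$ by a Fubini/coarea argument, while on its complement $F_r$ is locally bi-Lipschitz onto a piece of $S(A,r)$ built out of class-$2$ rectifiable data. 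The part \eqref{structure of level sets:1} is comparatively soft and I would dispatch it first as a warm-up, since it only needs rectifiability, not the class-$2$ refinement.
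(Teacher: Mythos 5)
There are genuine gaps in both parts, and they sit exactly at the points your sketch treats as routine. For part \eqref{structure of level sets:1}, recall that ``$n-1$ rectifiable'' in Federer's sense includes finiteness of $\Haus{n-1}(S(A,r)\cap K)$, and this is the hard content of the statement: it must hold for \emph{every} $r>0$, not almost every $r$. Your two suggested justifications do not give it. The coarea estimate for $\bm{\delta}_{A}$ only yields finiteness for $\Leb{1}$ a.e.\ $r$; and the containment $S(A,r)\subseteq\Bd A_{r}$ (let alone in a reduced boundary) is false in general --- e.g.\ for $A=\{y:|y|\geq r\}$ one has $S(A,r)=\{0\}$ while $A_{r}=\Real{n}$, so $\Bd A_{r}=\varnothing$; more generally $S(A,r)$ may meet the interior of $A_{r}$, and even on $\Bd A_{r}$ the topological boundary can exceed the reduced boundary in measure. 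The paper does not reprove this finiteness: it quotes \cite[2.3]{MR2865426} (which rests on \cite{MR816398}) for bounded $A$ and reduces the unbounded case to it by replacing $A$ with the compact set of foot points of $S(A,r)\cap K$. Also note that inside this paper the countable rectifiability of $N(A)$, which you use to start, is itself deduced later from \ref{structure of level sets}\eqref{structure of level sets:1} (via \ref{remark on S lambda} and \ref{bi-lipschitz parametrization of the unit normal bundle}), so you must invoke the independent proof of \cite[p.\ 243]{MR2031455} to avoid circularity.

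For part \eqref{structure of level sets:3}, the localisation you propose does not work as described: the Hug--Last--Weil approximants do not have ``arbitrarily large reach near any prescribed point of $N(A)$'' (the sets $\{\bm{\delta}_{A}\geq s\}$ have reach at least $s$, no more can be guaranteed), rescaling rescales $r$ as well, and even when $(a,u)\in N(A_{i})$ with $r<\reach(A_{i})$ there is no reason for $S(A,r)$ to agree with $S(A_{i},r)$ near $a+ru$. Your fallback via approximate differentiability of $F_{r}$ and a Fubini/coarea argument again only treats a.e.\ $r$ and leans on machinery the paper builds \emph{after} (and partly upon) this theorem. The fix is much softer and is what the paper does: for $x\in S(A,r)$ choose $a\in A$ with $|x-a|=r$ and observe that the open ball $\mathbf{U}(a,r)$ is disjoint from $S(A,r)$ and touches it at $x$; thus every point of the closed set $S(A,r)$ admits an exterior tangent ball, and \cite[4.12]{2017arXiv170309561M} then gives countable $\rect{n-1}$ rectifiability of class $2$ directly. (Equivalently, along the lines you were groping for: $S(A,r)\subseteq\Bd\{\bm{\delta}_{A}\geq r\}$ and the latter set has reach at least $r$, after which semiconcavity/Alexandrov arguments apply --- but some such input replacing your localisation is indispensable.)
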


\begin{proof}
	If $ A $ is bounded then the proof of \eqref{structure of level sets:1} is contained in \cite[2.3]{MR2865426} (which relies on \cite{MR816398}). If $ A $ is unbounded then the proof can be readily reduced to the previous case noting that if $ r > 0 $ and $ K \subseteq \Real{n} $ is compact then the set
	\begin{equation*}
	C = \bigcup_{x \in S(A,r) \cap K} A \cap \{ a : |x-a|= \bm{\delta}_{A}(x)   \}
	\end{equation*}
	is compact and $ S(A,r) \cap K \subseteq S(C,r) $.
	
	We notice that for each $ x \in S(A,r) $ there exists $ v \in \Real{n} \sim \{0\} $ such that $ \mathbf{U}(x+v, |v|) = \varnothing $. In fact, we can choose $ v = a - x $ for $ a \in A $ such that $ |x-a| = r $. Therefore \eqref{structure of level sets:3} comes from \cite[4.12]{2017arXiv170309561M}. Notice that \cite[4.12]{2017arXiv170309561M} also implies that $S(A,r)$ is countably $ n-1 $ rectifiable, a piece of information already contained in \eqref{structure of level sets:1}.
\end{proof}

\begin{Remark}
The local structure of the level sets of the distance function has been thoroughly studied in the last decades; see \cite{MR0413112}, \cite{MR0287442}, \cite{MR816398} and \cite{MR2954647}. However, here we only use the rectifiability properties in \ref{structure of level sets}.
\end{Remark}

\begin{Definition}
	If $ A \subseteq \Real{n} $ is a closed set, we define \emph{the positive boundary} $ \partial^{+}A $ of $ A $ as the set of all $ x \in A $ such that there exists $ v \in \Real{n} \sim \{0\} $ with $ A \cap \mathbf{U}(x+v, |v|) =\varnothing $. 
\end{Definition}

The following result is contained in \cite[2.5]{MR2865426} when $ A $ is a compact set. 

\begin{Lemma}\label{Extension of a lemme of Rataj-Winter}
Let $ A \subseteq \Real{n} $ be a closed set and let $ P_{r} = \{ x : \bm{\delta}_{A}(x) \leq r  \} $ for $ r > 0 $. Then for all $ r > 0 $ up to a countable set, 
\begin{equation*}
	\Haus{n-1}(S(A,r) \sim \partial^{+} P_{r}  ) =0.
\end{equation*}
\end{Lemma}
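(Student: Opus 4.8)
The plan is to reduce the unbounded case to the bounded case of \cite[2.5]{MR2865426}, exactly as was done for Theorem~\ref{structure of level sets}\eqref{structure of level sets:1}. Fix $r>0$ and an exhaustion of $\Real{n}$ by closed balls $K_{j} = \mathbf{B}(0,j)$. For each $j$ set
\begin{equation*}
C_{j} = \bigcup_{x \in S(A,r) \cap K_{j}} A \cap \{ a : |x-a| = \bm{\delta}_{A}(x) \},
\end{equation*}
which is compact (this was already observed in the proof of Theorem~\ref{structure of level sets}), and note $S(A,r) \cap K_{j} \subseteq S(C_{j},r)$ and moreover $\bm{\delta}_{C_{j}} = \bm{\delta}_{A}$ on a neighborhood of $S(A,r) \cap \Int K_{j}$, because a nearest point of $A$ to any $x$ near $S(A,r) \cap \Int K_{j}$ already lies in $C_{j}$. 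Consequently the parallel sets agree locally: $P_{r}^{A} \cap \Int K_{j}$ and $P_{r}^{C_{j}} \cap \Int K_{j}$ coincide as sets, and hence their positive boundaries agree inside $\Int K_{j}$, i.e.\ $\partial^{+}P_{r}^{A} \cap \Int K_{j} = \partial^{+}P_{r}^{C_{j}} \cap \Int K_{j}$. The point is that the condition defining $\partial^{+}$ at a point $x$ — existence of $v \neq 0$ with $A \cap \mathbf{U}(x+v,|v|) = \varnothing$ — only involves the set near $x$ once one restricts $|v|$ to be small, and $x \in \partial^{+}P_{r}$ with the witnessing ball contained in $\mathbf{U}(x, \bm{\delta}_{A}(x) + \text{const})$; a short argument shrinking the witnessing ball handles the case of large $v$.

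Next, apply \cite[2.5]{MR2865426} to the compact set $C_{j}$: there is a countable set $E_{j} \subseteq (0,\infty)$ such that for all $r \notin E_{j}$,
\begin{equation*}
\Haus{n-1}\bigl(S(C_{j},r) \without \partial^{+}P_{r}^{C_{j}}\bigr) = 0.
\end{equation*}
Let $E = \bigcup_{j} E_{j}$, a countable set. Fix $r \notin E$. Since $S(A,r) = \bigcup_{j} \bigl(S(A,r) \cap \Int K_{j}\bigr)$, it suffices to show $\Haus{n-1}\bigl((S(A,r) \cap \Int K_{j}) \without \partial^{+}P_{r}^{A}\bigr) = 0$ for each $j$. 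Using $S(A,r) \cap \Int K_{j} \subseteq S(C_{j},r)$, the set $(S(A,r)\cap \Int K_{j}) \without \partial^{+}P_{r}^{A}$ is contained in $\bigl(S(C_{j},r) \without \partial^{+}P_{r}^{C_{j}}\bigr) \cap \Int K_{j}$, which is $\Haus{n-1}$-null by the previous display together with the local identification $\partial^{+}P_{r}^{A} \cap \Int K_{j} = \partial^{+}P_{r}^{C_{j}} \cap \Int K_{j}$. This proves the claim, and taking the countable union over $j$ finishes the proof.

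The main obstacle is the geometric bookkeeping in the first paragraph: one must verify carefully that $\partial^{+}$ is genuinely a local notion near $S(A,r)$, so that passing from $A$ to the compact approximant $C_{j}$ does not create or destroy positive-boundary points inside $\Int K_{j}$. This requires checking that if $x \in S(A,r) \cap \Int K_{j}$ admits a witnessing vector $v$ with $A \cap \mathbf{U}(x+v,|v|) = \varnothing$, then one may replace $v$ by a short vector $v'$ (say with $|v'|$ comparable to a fixed multiple of $r$, small enough that $\mathbf{U}(x+v',|v'|) \subseteq \Int K_{j}$) still witnessing $x \in \partial^{+}A$; this is the standard fact that the exterior-ball condition can be localized by sliding the ball toward $x$ along the segment to its center, shrinking the radius. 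Once this is in hand, everything else is the routine exhaustion argument already used in the excerpt.
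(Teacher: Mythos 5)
Your reduction to the compact case of \cite[2.5]{MR2865426} is the right general strategy (it is also the paper's), but as written it has two genuine gaps, both located exactly where the paper's choice of truncation does real work. First, your compact approximant $C_{j}$ is built from $S(A,r)\cap K_{j}$, so it depends on the radius $r$ you fixed at the outset; consequently the countable exceptional set $E_{j}$ produced by \cite[2.5]{MR2865426} for $C_{j}$ also depends on $r$. The step ``let $E=\bigcup_{j}E_{j}$ and fix $r\notin E$'' is therefore circular: for each $r$ you only obtain a countable bad set $E(r)$, you cannot guarantee $r\notin E(r)$, and the union of the sets $E(r)$ over uncountably many $r$ need not be countable. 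The paper avoids this by truncating with $A\cap\mathbf{B}(0,i+j)$, a set independent of $r$, which serves all $r\in(0,j)$ and all points of $\mathbf{U}(0,i)$ simultaneously, so a single countable exceptional set per pair $(i,j)$ suffices.

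Second, the geometric identification you invoke is false. Since $C_{j}\subseteq A$ one has $\bm{\delta}_{C_{j}}\geq\bm{\delta}_{A}$, hence $P^{C_{j}}_{r}\subseteq P^{A}_{r}$, and the direction you need --- an exterior ball for $P^{C_{j}}_{r}$ at $x\in S(A,r)$ yields one for the \emph{larger} set $P^{A}_{r}$ --- would require $\bm{\delta}_{C_{j}}=\bm{\delta}_{A}$ near $S(A,r)\cap\Int K_{j}$. But $C_{j}$ contains only nearest points of level-set points and misses points of $A$ that are shadowed at distance exactly $r$ yet still govern $\bm{\delta}_{A}$ arbitrarily close to the level set. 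For instance, in $\Real{2}$ take $A=\{(x,y):y\leq 0\}\cup\{(0,1/2)\}$ and $r=1$; the set of nearest points of $S(A,1)$ is $C=\{(x,0):|x|\geq\sqrt{3}/2\}\cup\{(0,1/2)\}$, while the points $z_{\epsilon}=(\sqrt{3}/2-\epsilon,\,1-2\sqrt{3}\epsilon)$ converge to $(\sqrt{3}/2,1)\in S(A,1)$ and satisfy $\bm{\delta}_{A}(z_{\epsilon})=1-2\sqrt{3}\epsilon<\bm{\delta}_{C}(z_{\epsilon})$, the nearest point $(\sqrt{3}/2-\epsilon,0)$ lying in $A\sim C$. (Likewise the claimed identity $P^{A}_{r}\cap\Int K_{j}=P^{C_{j}}_{r}\cap\Int K_{j}$ fails for $A=\mathbf{S}^{1}\cup\{0\}\subseteq\Real{2}$, $r=3/5$: the origin belongs to the left-hand side but not to the right-hand side.) Your remark about shrinking the witnessing ball is correct but does not address either problem. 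The paper's truncation fixes both at one stroke: any point of $A$ within distance $r<j$ of a point of $\mathbf{U}(0,i)$ lies in $A\cap\mathbf{B}(0,i+j)$, so inside a witnessing ball contained in $\mathbf{U}(0,i)$ the parallel sets of $A$ and of the truncation genuinely coincide, and the exterior-ball property transfers to $P^{A}_{r}$.
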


\begin{proof}
	If $ r > 0 $ and $ i \geq 1 $ is an integer, we define $P_{i,r} = \{ x : \bm{\delta}_{A \cap \mathbf{B}(0,i)}(x)  \leq r  \}$. We fix two integers $ i \geq 1 $ and $ j \geq 1 $ and we prove that for all $ 0 < r < j $ up to a countable set,
	\begin{equation*}
	\Haus{n-1}(S(A,r) \cap \mathbf{U}(0,i) \sim \partial^{+}P_{r}) =0.
	\end{equation*}
Let $ 0 < r < j $ and $ x \in S(A,r) \cap \mathbf{U}(0,i) \cap \partial^{+}P_{i+j,r} $. Then there exist $ s > 0 $ and $ v \in \Real{n} $ with $ |v|=1 $ and $ \mathbf{U}(x+sv,s) \cap P_{i+j,r}= \varnothing $. Evidently we can choose $ s $ small so that $ \mathbf{U}(x+sv,s) \subseteq \mathbf{U}(0,i) $. If there was $ z \in \mathbf{U}(x+sv,s) $ such that $ \bm{\delta}_{A}(z) \leq r $ then we could choose $ a \in A $ so that $ |z-a| = \bm{\delta}_{A}(z) $ and infer that 
\begin{equation*}
a \in A \cap \mathbf{B}(0, i+j), \quad \bm{\delta}_{A \cap \mathbf{B}(0, i+j)}(z) \leq r,
\end{equation*}
whence we would get a contradiction. Therefore
\begin{equation*}
S(A,r) \cap \mathbf{U}(0,i) \cap \partial^{+}P_{i+j,r} \subseteq S(A,r) \cap \mathbf{U}(0,i) \cap \partial^{+}P_{r}.
\end{equation*}
Moreover we observe that
\begin{equation*}
	S(A,r) \cap \mathbf{U}(0,i) \subseteq S(A \cap \mathbf{B}(0,i+j),r) \quad \textrm{for all $ 0 < r < j $.}
\end{equation*}
Now we employ \cite[2.5]{MR2865426} to infer
\begin{equation*}
\Haus{n-1}(S(A,r) \cap \mathbf{U}(0,i) \sim \partial^{+}P_{r}  ) =0
\end{equation*}
for all $ 0 < r < j $, up to a countable set.
\end{proof}

\section{Fine properties of the nearest point projection}\label{section: approx. diff of n.p.p.}

The main objective of this section is to analyse the fine properties of the nearest point projection $ \bm{\xi}_{A} $ and relate them to the tangential and curvature properties of the distance sets $S(A,r)$.
  
We start introducing some basic notation. It will be repeatedly used through the rest of this paper together with the notation already introduced in \ref{definition: distance function}.
	
	\begin{Definition}[Basic notation]\label{nearest point projection}
		Suppose $ A \subseteq \Real{n} $ is closed and $U$ is the set of all $x \in \Real{n}$ such that there exists a unique $a \in A$ with $|x-a| = \bm{\delta}_{A}(x)$. The \textit{nearest point projection onto~$A$} is
		the map $\bm{\xi}_{A}$ characterised by the requirement
		\begin{equation*}
		| x- \bm{\xi}_{A}(x)| = \bm{\delta}_{A}(x) \quad \textrm{for $x \in U$}.
		\end{equation*}
		\noindent Let $ \bm{\nu}_{A} $ and $ \bm{\psi}_{A} $ be the functions on $U \sim A$ such that 
	\begin{equation*}
	\bm{\nu}_{A}(z) = \bm{\delta}_{A}(z)^{-1}(z -  \bm{\xi}_{A}(z)) \quad \textrm{and} \quad \bm{\psi}_{A}(z)= (\bm{\xi}_{A}(z), \bm{\nu}_{A}(z)),
	\end{equation*}
	whenever $ z \in U \sim A $. We refer to $\bm{\nu}_{A}$ as \emph{the spherical image map of $ A $}. Finally, 
		\begin{equation*}
		U(A) = \dmn \bm{\xi}_{A} \sim A.
		\end{equation*}
			\end{Definition}

\begin{Remark}\label{Unp}
It is known that $ \bm{\xi}_{A} $ is continuous by \cite[4.8(4)]{MR0110078}, $ \dmn \bm{\xi}_{A} $ is a Borel subset of $ \Real{n} $ by \cite[3.5]{2017arXiv170309561M}, $ \bm{\xi}_{A}^{-1}\{a\} $ is a convex subset \mbox{of $ \Real{n} $} whenever $ a \in A $ by \cite[4.8(2)]{MR0110078} and 
\begin{equation}\label{Unp:1}
	\Leb{n}(\Real{n} \sim \dmn \bm{\xi}_{A}) =0
\end{equation}
by \cite[4.8(3)]{MR0110078} and Rademacher's theorem \cite[3.1.6]{MR0257325}. 
\end{Remark}

\begin{Remark}\label{psi is an homeomorphism}
Noting \ref{Unp}, we readily infer that for every $ 0 < r < \infty $ the map $ \bm{\psi}_{A}| U(A) \cap S(A,r) $ is an homeomorphism with 
\begin{equation*}
 (\bm{\psi}_{A}| U(A) \cap S(A,r))^{-1}(a,u) = a + ru \quad \textrm{whenever $ (a,u) \in \bm{\psi}_{A}[U(A) \cap S(A,r)] $.}
\end{equation*}
\end{Remark}

\begin{Remark}\label{n.p.p. along radial directions}
We notice that if $ v \in \Real{n} \sim \{0\} $, $ a \in A $ and $ |v| = \bm{\delta}_{A}(a+v) $ then 
	\begin{equation*}
		a + t v \in U(A) \quad \textrm{and} \quad   \bm{\xi}_{A}(a + t v) = a 
	\end{equation*}
	whenever $ 0 < t < 1 $.
\end{Remark}

\begin{Lemma}\label{range of the ap diff of the n.p.p.} 
Suppose $ A \subseteq \Real{n} $ is closed, $ x \in U(A) $, $ \bm{\xi}_{A} $ is approximately differentiable at $ x $ and $ T = \Real{n} \cap \{v: v \bullet \bm{\nu}_{A}(x) = 0 \}$.

Then $\bm{\delta}_{A} $ is differentiable at $x$, $ \bm{\nu}_{A} $ is approximately differentiable at $ x $, 
\begin{equation*}
ap \Der \bm{\xi}_{A}(x) \bullet \bm{\nu}_{A}(x) = 0 \quad \textrm{and} \quad \ap \Der \bm{\nu}_{A}(x) = | x - \bm{\xi}_{A}(x)|^{-1} (T_{\natural}  -  \ap \Der \bm{\xi}_{A}(x) ).
\end{equation*}
In particular $ \ker \ap \Der \bm{\psi}_{A}(x) \subseteq T^{\perp} $.
\end{Lemma}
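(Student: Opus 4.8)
The plan is to exploit the pointwise identity $\bm{\nu}_A(z)=\bm{\delta}_A(z)^{-1}(z-\bm{\xi}_A(z))$, valid on $U(A)$, together with the \emph{classical} differentiability of $\bm{\delta}_A$ at $x$ and the elementary calculus of approximate differentials. Two preliminary points will be used throughout. \emph{(a)} Since $\Leb{n}(\Real{n}\sim\dmn\bm{\xi}_A)=0$ (see \ref{Unp}) and $A$ is closed with $x\notin A$, the set $U(A)=\dmn\bm{\xi}_A\sim A$ has Lebesgue density $1$ at $x$; hence two functions agreeing on $U(A)$ near $x$ have the same approximate differential at $x$, a function agreeing on $U(A)$ near $x$ with a classically differentiable function is approximately differentiable at $x$ with the corresponding differential, and, consequently, the product and chain rules for approximate differentials (with a classically differentiable factor, respectively outer map) reduce to the classical ones; cf.\ \ref{section: On approximate differentiability} and \cite[\S2]{2017arXiv170107286S}. \emph{(b)} Because $x$ has a \emph{unique} nearest point $\bm{\xi}_A(x)$ in $A$, the function $\bm{\delta}_A$ is classically differentiable at $x$ with $\grad\bm{\delta}_A(x)=\bm{\nu}_A(x)$; this is classical (cf.\ \cite[4.8]{MR0110078}) and also follows by combining the upper bound $\bm{\delta}_A(y)\leq|y-\bm{\xi}_A(x)|$ with the fact that the nearest points of any sequence $y_k\to x$ converge to $\bm{\xi}_A(x)$. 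In particular $\bm{\delta}_A(x)=|x-\bm{\xi}_A(x)|>0$, so $z\mapsto\bm{\delta}_A(z)^{-1}$ is differentiable at $x$ with differential $v\mapsto-\bm{\delta}_A(x)^{-2}(\bm{\nu}_A(x)\bullet v)$.

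With this in hand I would first obtain the formula by the product rule applied to $\bm{\nu}_A(z)=\bm{\delta}_A(z)^{-1}(z-\bm{\xi}_A(z))$ on $U(A)$: the vector factor $z\mapsto z-\bm{\xi}_A(z)$ is approximately differentiable at $x$ with approximate differential $v\mapsto v-\ap\Der\bm{\xi}_A(x)(v)$ and value $x-\bm{\xi}_A(x)=\bm{\delta}_A(x)\bm{\nu}_A(x)$, while the scalar factor is differentiable by \emph{(b)}. Hence $\bm{\nu}_A$ is approximately differentiable at $x$ and, for every $v\in\Real{n}$,
\begin{align*}
\ap\Der\bm{\nu}_A(x)(v) &= \bm{\delta}_A(x)^{-1}\bigl(v-\ap\Der\bm{\xi}_A(x)(v)\bigr)-\bm{\delta}_A(x)^{-2}(\bm{\nu}_A(x)\bullet v)\,(x-\bm{\xi}_A(x))\\
&= \bm{\delta}_A(x)^{-1}\bigl(v-(\bm{\nu}_A(x)\bullet v)\bm{\nu}_A(x)-\ap\Der\bm{\xi}_A(x)(v)\bigr).
\end{align*}
Since $\bm{\nu}_A(x)$ is a unit vector and $T=\Real{n}\cap\{v:v\bullet\bm{\nu}_A(x)=0\}$, one has $v-(\bm{\nu}_A(x)\bullet v)\bm{\nu}_A(x)=T_{\natural}(v)$, so $\ap\Der\bm{\nu}_A(x)=|x-\bm{\xi}_A(x)|^{-1}\bigl(T_{\natural}-\ap\Der\bm{\xi}_A(x)\bigr)$, as claimed.

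To get $\ap\Der\bm{\xi}_A(x)\bullet\bm{\nu}_A(x)=0$ I would argue with $\bm{\delta}_A(z)=|z-\bm{\xi}_A(z)|$ on $U(A)$: as $x-\bm{\xi}_A(x)\neq0$, the chain rule makes the right-hand side approximately differentiable at $x$ with approximate differential $v\mapsto\bm{\nu}_A(x)\bullet\bigl(v-\ap\Der\bm{\xi}_A(x)(v)\bigr)$; comparing with the classical differential $v\mapsto\bm{\nu}_A(x)\bullet v$ of $\bm{\delta}_A$ from \emph{(b)}, and using that a classical differential coincides with the approximate one whenever both exist, yields $\bm{\nu}_A(x)\bullet\ap\Der\bm{\xi}_A(x)(v)=0$ for all $v$. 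Finally, $\bm{\psi}_A=(\bm{\xi}_A,\bm{\nu}_A)$ is approximately differentiable at $x$ with $\ap\Der\bm{\psi}_A(x)(v)=(\ap\Der\bm{\xi}_A(x)(v),\ap\Der\bm{\nu}_A(x)(v))$; if $v\in\ker\ap\Der\bm{\psi}_A(x)$ then both components vanish and the displayed formula forces $T_{\natural}(v)=0$, i.e.\ $v\in T^{\perp}$. I expect \emph{(b)} to be the only genuinely non-formal step: approximate differentiability of $\bm{\xi}_A$ by itself gives only \emph{approximate} differentiability of $\bm{\delta}_A$, which is too weak to identify $\ap\Der\bm{\xi}_A(x)\bullet\bm{\nu}_A(x)$ and to legitimately differentiate $\bm{\delta}_A^{-1}$; classical differentiability at $x$ really uses the uniqueness of the nearest point.
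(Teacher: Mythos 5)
Your proof is correct, and on the one genuinely delicate point of the lemma --- the identity $\ap \Der \bm{\xi}_{A}(x) \bullet \bm{\nu}_{A}(x) = 0$ --- it takes a different and shorter route than the paper. The paper proves this geometrically: writing $ r = \bm{\delta}_{A}(x) $, it shows that $ z \mapsto (\bm{\xi}_{A}(z)-\bm{\xi}_{A}(x))\bullet \bm{\nu}_{A}(x) + \bigl(r^{2}-|T_{\natural}(\bm{\xi}_{A}(z)-\bm{\xi}_{A}(x))|^{2}\bigr)^{1/2} $ attains a local maximum at $ z=x $ (because each $ \bm{\xi}_{A}(z) $ lies outside $ \mathbf{U}(x,r) $ and, by continuity of $ \bm{\xi}_{A} $, close to $ \bm{\xi}_{A}(x) $), and then applies the first-derivative test \ref{local maximum of approximately differentiable functions}. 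You instead equate two expressions for $ \Der \bm{\delta}_{A}(x) $: the chain-rule value $ v \mapsto \bm{\nu}_{A}(x)\bullet(v-\ap\Der\bm{\xi}_{A}(x)(v)) $ coming from $ \bm{\delta}_{A} = |\,\cdot - \bm{\xi}_{A}(\cdot)| $ on $ \dmn \bm{\xi}_{A} $, and the classical value $ v \mapsto \bm{\nu}_{A}(x)\bullet v $ from \cite[4.8(3)]{MR0110078}; uniqueness of the approximate differential at a density point of $ U(A) $ then forces $ \bm{\nu}_{A}(x)\bullet \ap\Der\bm{\xi}_{A}(x)(v)=0 $ for all $ v $. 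Since the paper already invokes \cite[4.8(3)]{MR0110078} to identify $ \grad\bm{\delta}_{A}(x)=\bm{\nu}_{A}(x) $ in its computation of $ \ap\Der\bm{\nu}_{A}(x) $, your comparison uses no ingredient the paper does not already rely on, and it eliminates the half-page local-maximum argument; what the paper's version buys is that the orthogonality is obtained directly from the geometry of the nearest point projection rather than from the gradient formula for the distance function. The remaining steps match the paper's, up to the minor difference that you obtain differentiability of $ \bm{\delta}_{A} $ at $ x $ from $ x \in U(A) $ alone (the ``unique nearest point implies differentiable'' half of \cite[4.8(3)]{MR0110078}, which you also sketch directly), whereas the paper derives it from the hypothesis on $ \bm{\xi}_{A} $ via \ref{composition and approx diff} and \ref{approx. versus pointwise diff. for Lip functions}; both are legitimate.
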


\begin{proof}
Since $\bm{\delta}_{A}(y)= | y-\bm{\xi}_{A}(y)| $ for $ y \in \dmn \bm{\xi}_{A} $, we use \ref{composition and approx diff}, \ref{approx. versus pointwise diff. for Lip functions} and \cite[4.8(3)]{MR0110078} to deduce that $ \bm{\delta}_{A} $ is differentiable at $ x $ and 
\begin{equation}\label{range of the ap diff of the n.p.p. eq 1} 
\Der \bm{\delta}_{A}(x)(v) = \bm{\nu}_{A}(x) \bullet v \quad \textrm{for $ v \in \Real{n} $.}
\end{equation}
It follows that $\bm{\nu}_{A}$ is approximately differentiable at $x$ and employing \eqref{range of the ap diff of the n.p.p. eq 1} one computes 
\begin{flalign*}
	\ap \Der \bm{\nu}_{A}(x)(v) = \frac{T_{\natural}(v)-\ap \Der \bm{\xi}_{A}(x)(v)}{\bm{\delta}_{A}(x)} \quad \textrm{for $ v \in \Real{n} $.}
\end{flalign*}
Then we readily infer that $ \ker \ap \Der \bm{\psi}_{A}(x) \subseteq T^{\perp} $.

If $ r = |x - \bm{\xi}_{A}(x)| $ we use the continuity of $ \bm{\xi}_{A} $ at $ x $ (see \ref{Unp}) to select $ 0 < \delta < r $ such that $|\bm{\xi}_{A}(z) - \bm{\xi}_{A}(x)| \leq r$  and
\begin{equation}\label{range of the ap diff of the n.p.p. 1}
(\bm{\xi}_{A}(z) - x)\bullet \bm{\nu}_{A}(x) = (\bm{\xi}_{A}(z)-\bm{\xi}_{A}(x))\bullet \bm{\nu}_{A}(x) - r \leq 0
\end{equation}
whenever $ z \in \mathbf{U}(x,\delta) \cap \dmn \bm{\xi}_{A} $. Since $ |\bm{\xi}_{A} - x | \geq r $ and $ T_{\natural}(x - \bm{\xi}_{A}(x)) =0 $ we use \eqref{range of the ap diff of the n.p.p. 1} to infer
	\begin{equation*}
	\big(r^{2}- | T_{\natural}(\bm{\xi}_{A}(z)- \bm{\xi}_{A}(x))|^{2}\big)^{1/2} \leq |(\bm{\xi}_{A}(z) - x)\bullet \bm{\nu}_{A}(x)|=-(\bm{\xi}_{A}(z) - x)\bullet \bm{\nu}_{A}(x),
\end{equation*}
\begin{equation}\label{range of the ap diff of the n.p.p. 2}
(\bm{\xi}_{A}(z) - \bm{\xi}_{A}(x)) \bullet \bm{\nu}_{A}(x) + (r^{2} - |T_{\natural}(\bm{\xi}_{A}(z)-\bm{\xi}_{A}(x))|^{2})^{1/2} \leq r,
\end{equation}
for $ z \in \mathbf{U}(x,\delta) \cap \dmn \bm{\xi}_{A} $. Employing \ref{local maximum of approximately differentiable functions} and \ref{composition and approx diff} we obtain from \eqref{range of the ap diff of the n.p.p. 2} that
\begin{equation*}
\ap \Der \bm{\xi}_{A}(x) \bullet \bm{\nu}_{A}(x) = 0.
\end{equation*}
\end{proof}

\begin{Definition}\label{definition of rho}
	If $A$ is a closed subset of $\Real{n}$, we define
	\begin{equation*}
	\rho(A,x) = \sup \{t : \bm{\delta}_{A}(\bm{\xi}_{A}(x) + t (x-\bm{\xi}_{A}(x) ))=t \bm{\delta}_{A}(x)  \},
	\end{equation*}
	whenever $x \in U(A) $.
\end{Definition}

\begin{Remark}\label{basic properties of rho}
	We notice that if $ x \in U(A) $ then $ 1 \leq \rho(A,x) \leq \infty $ and
	\begin{equation*}
\rho(A,x) \geq \lambda  \quad \textrm{if and only if} \quad \bm{\delta}_{A}(\bm{\xi}_{A}(x) + \lambda (x-\bm{\xi}_{A}(x) ))=\lambda \bm{\delta}_{A}(x) 
	\end{equation*}
for $ \lambda \geq 1 $. It follows from \ref{Unp} that $ \rho(A,\cdot): U(A) \rightarrow \Real{} \cup \{+\infty\} $ is an upper-semicontinuous function.
\end{Remark}

\begin{Definition}\label{A lambda: def}
	If $A$ is a closed subset of $\Real{n}$ and $ \lambda \geq 1 $ we define
	\begin{equation*}
		A_{\lambda} = \{ x : \rho(A,x) \geq \lambda   \}
	\end{equation*}
	and $ D(A_{\lambda}) $ to be the set of $ x \in A_{\lambda} $ such that $\bm{\xi}_{A}|A_{\lambda}$ is approximately differentiable at $x$; see \ref{approximate differentiability for functions remark}.
\end{Definition}

\begin{Remark}\label{A_lambda and positive reach}
	If $ 0 < R = \reach(A)$, $ 0 < r < R$ and $ 0 < \bm{\delta}_{A}(x)\leq r $ it follows from \cite[4.8(6)]{MR0110078} that
	\begin{equation*}
	\sup \{ t : \bm{\xi}_{A}(\bm{\xi}_{A}(x)  + t (x-\bm{\xi}_{A}(x)   ))= \bm{\xi}_{A}(x)   \} \geq R/r;
	\end{equation*}
	in particular, $ \{ x : 0 < \bm{\delta}_{A}(x)\leq r  \} \subseteq A_{R/r} $.
\end{Remark}

Here we provide a thorough description of the nearest point projection $ \bm{\xi}_{A} $ on the super level sets $ A_{\lambda} $.

\begin{Lemma}\label{A lambda}
	Suppose $ A $ is a closed subset of $ \Real{n} $ and define the maps\footnote{In case $ A $ is convex, the map $ h_{t} $ is called ``dilation with center $A$'' in \cite[\S 3]{MR0417984}.} $ h_{t}$ on $ U(A) $ corresponding to $ 0 < t < \infty $ by
	\begin{equation}\label{A lambda:3}
	h_{t}(z) = \bm{\xi}_{A}(z) + t ( z - \bm{\xi}_{A}(z)) \quad \textrm{for $ z \in U(A) $.}
	\end{equation}
	
	Then the following statements hold for $ 1 < \lambda < \infty $ and $ 0 < t < \lambda $.
	\begin{enumerate}
		\item \label{lipschitzianity nearest point projection} $ \Lip(\bm{\xi}_{A}|A_{\lambda}) \leq \lambda (\lambda -1)^{-1} $ and $h_{t}|A_{\lambda}$ is a bi-Lipschitzian homeomorphism onto $ A_{\lambda/t} $ with $(h_{t}|A_{\lambda})^{-1} = h_{t^{-1}}|A_{\lambda/t} $.
		\item \label{A lambda:2} $\Leb{n}(A_{\lambda} \sim D(A_{\lambda})) =0 $.
		\item \label{differentiable extension of psi} The map $\bm{\psi}_{A}|A_{\lambda}$ has an extension $\Psi : \Real{n} \rightarrow \Real{n}\times \Real{n} $ such that $\Psi$ is differentiable at every $x\in D(A_{\lambda})$ with $\Der \Psi(x) = \ap \Der \bm{\psi}_{A}(x)$. Moreover $\ker \ap \Der \bm{\psi}_{A}(x) = \{ s\bm{\nu}_{A}(x): s \in \Real{}   \}$ whenever $x \in D(A_{\lambda})$.
		\item \label{A lambda:4} $ h_{t}[D(A_{\lambda})] \subseteq D(A_{\lambda/t}) $.
		\item \label{n.p.p.and dilations} If $ x \in D(A_{\lambda}) $ then $ h_{t^{-1}} $ is approximately differentiable at $ h_{t}(x) $ with 
		\begin{equation*}
\ap \Der h_{t^{-1}}(h_{t}(x)) =\ap \Der h_{t}(x)^{-1}, 
		\end{equation*}
		\begin{equation*}
			 \ap \Der \bm{\psi}_{A}(x) = \ap \Der \bm{\psi}_{A}(h_{t}(x)) \circ \ap \Der h_{t}(x).
		\end{equation*}
		\item \label{symmetry} If $ x \in D(A_{\lambda}) $ then the eigenvalues of $\ap \Der \bm{\xi}_{A}(x) $ and $\ap \Der \bm{\nu}_{A}(x) $ belong to the intervals $0 \leq s \leq \lambda (\lambda -1)^{-1} $ and $(1-\lambda)^{-1}\bm{\delta}_{A}(x)^{-1} \leq s \leq \bm{\delta}_{A}(x)^{-1} $, respectively. In case $\ap \Der \bm{\xi}_{A}(x)$ is a symmetric endomorphism, so are $\ap \Der \bm{\xi}_{A}(h_{t}(x))$ and $\ap \Der \bm{\nu}_{A}(h_{t}(x))$.
	\end{enumerate}
\end{Lemma}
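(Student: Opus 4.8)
The plan is to establish the seven items essentially in the order stated, since each builds on the previous ones, and to use the reach-type function $\rho(A,\cdot)$ from \ref{definition of rho} together with the linear scaling maps $h_t$ as the main tool.

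First I would prove \eqref{lipschitzianity nearest point projection}. The key geometric input is that if $x \in A_\lambda$ then $\bm{\delta}_A(\bm{\xi}_A(x) + \lambda(x - \bm{\xi}_A(x))) = \lambda\bm{\delta}_A(x)$, which means the segment from $\bm{\xi}_A(x)$ through $x$ extends to a point realizing its nearest point at $\bm{\xi}_A(x)$; by \ref{n.p.p. along radial directions} the whole segment lies in $U(A)$ and projects to $\bm{\xi}_A(x)$. For two points $x, y \in A_\lambda$ one compares the balls of radius $\bm{\delta}_A$ tangent at $\bm{\xi}_A(x)$ and $\bm{\xi}_A(y)$ with these extended radii: a standard ``two balls'' estimate (exactly as in Federer \cite[4.8(6)]{MR0110078} but localized to $A_\lambda$ rather than requiring global positive reach) yields $|\bm{\xi}_A(x) - \bm{\xi}_A(y)| \le \lambda(\lambda-1)^{-1}|x-y|$. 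The statement about $h_t$ is then a direct computation: $h_t$ maps the segment structure of $A_\lambda$ to that of $A_{\lambda/t}$, the identity $\rho(A, h_t(z)) = t^{-1}\rho(A,z)$ for $z \in U(A)$ follows straight from \ref{definition of rho}, and the inverse is visibly $h_{t^{-1}}$; Lipschitz bounds on $h_t|A_\lambda$ and its inverse follow from those on $\bm{\xi}_A$. Item \eqref{A lambda:2} is then immediate from the Lipschitz property via Rademacher's theorem together with \eqref{Unp:1}: $\bm{\xi}_A|A_\lambda$ is Lipschitz hence differentiable $\Haus{n}$-a.e.\ on $A_\lambda$ relative to $A_\lambda$, and at density points of $A_\lambda$ this relative differentiability upgrades to approximate differentiability.

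For \eqref{differentiable extension of psi} I would extend the Lipschitz map $\bm{\xi}_A|A_\lambda$ to all of $\Real{n}$ by Kirszbraun's theorem, extend $\bm{\delta}_A$ (which is $1$-Lipschitz on $\Real{n}$ already) and hence get an extension $\Psi$ of $\bm{\psi}_A|A_\lambda$; at a density point $x \in D(A_\lambda)$ the pointwise differential of the extension agrees with the approximate differential of the original map, by \ref{approx. versus pointwise diff. for Lip functions}. The computation of the kernel combines \ref{range of the ap diff of the n.p.p.} (which gives $\ker \ap\Der\bm{\psi}_A(x) \subseteq T^\perp = \{s\bm{\nu}_A(x)\}$) with the reverse inclusion: differentiating the radial identity $\bm{\xi}_A(h_t(x)) = \bm{\xi}_A(x)$ and $\bm{\delta}_A(h_t(x)) = t\bm{\delta}_A(x)$ along $t$ shows $\bm{\nu}_A(x) \in \ker\ap\Der\bm{\xi}_A(x)$ and, using the formula for $\ap\Der\bm{\nu}_A(x)$ from \ref{range of the ap diff of the n.p.p.}, also $\bm{\nu}_A(x) \in \ker\ap\Der\bm{\nu}_A(x)$, hence in $\ker\ap\Der\bm{\psi}_A(x)$. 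Items \eqref{A lambda:4} and \eqref{n.p.p.and dilations} are then chain-rule bookkeeping: $h_t = \bm{\xi}_A + t(\mathrm{id} - \bm{\xi}_A)$ restricted to $A_\lambda$ is bi-Lipschitz onto $A_{\lambda/t}$ by \eqref{lipschitzianity nearest point projection}, so it carries density points to density points and, where $\bm{\xi}_A|A_\lambda$ is approximately differentiable, so is $\bm{\xi}_A|A_{\lambda/t}$ at the image point; the composition and inverse-function formulas for approximate differentials (\ref{composition and approx diff} and its consequences) give the displayed identities once one checks $\ap\Der h_t(x)$ is invertible, which holds because its only possibly-degenerate direction is $\bm{\nu}_A(x)$, on which it acts as multiplication by $t \neq 0$ (using $\ap\Der\bm{\xi}_A(x)\bm{\nu}_A(x) = 0$).

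Finally, \eqref{symmetry}: the eigenvalue bounds for $\ap\Der\bm{\xi}_A(x)$ follow from the Lipschitz constant $\lambda(\lambda-1)^{-1}$ in \eqref{lipschitzianity nearest point projection} together with the fact (from \eqref{range of the ap diff of the n.p.p. 1} in the proof of \ref{range of the ap diff of the n.p.p.}, or rather a one-sided monotonicity of $\bm{\xi}_A$ along $A_\lambda$) that $\ap\Der\bm{\xi}_A(x)$ is positive semidefinite on $T$; the bounds for $\ap\Der\bm{\nu}_A(x)$ then come from the relation $\ap\Der\bm{\nu}_A(x) = \bm{\delta}_A(x)^{-1}(T_\natural - \ap\Der\bm{\xi}_A(x))$ in \ref{range of the ap diff of the n.p.p.}. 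The symmetry-propagation claim uses \eqref{n.p.p.and dilations}: since $\ap\Der\bm{\psi}_A(x) = \ap\Der\bm{\psi}_A(h_t(x)) \circ \ap\Der h_t(x)$ and $h_t$ is the affine-type map $\bm{\xi}_A(x) + t(\cdot - \bm{\xi}_A(x))$ whose differential is explicitly $t\,T_\natural^\perp$-ish plus a correction built from $\ap\Der\bm{\xi}_A(x)$, one can invert the relation and express $\ap\Der\bm{\xi}_A(h_t(x))$ in terms of $\ap\Der\bm{\xi}_A(x)$ and scalars; a short linear-algebra argument (conjugation by a symmetric positive map preserves symmetry) then transfers symmetry from $x$ to $h_t(x)$, and the $\bm{\nu}_A$ case follows again via the relation to $\bm{\xi}_A$.

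\textbf{Main obstacle.} I expect the delicate point to be \eqref{lipschitzianity nearest point projection}: one must derive the Lipschitz estimate for $\bm{\xi}_A$ on the super-level set $A_\lambda$ using only the local condition $\rho(A,\cdot) \geq \lambda$, rather than a global reach bound, so the classical Federer argument must be run segment-by-segment with the extended radii — and one must simultaneously verify that $h_t$ maps $A_\lambda$ exactly onto $A_{\lambda/t}$ (both inclusions), for which the scaling identity $\rho(A, h_t(z)) = t^{-1}\rho(A,z)$ is the crux. Once that is in place, \eqref{A lambda:2} is routine and the remaining items are systematic applications of Rademacher/Kirszbraun and the chain rule for approximate differentials from the appendix.
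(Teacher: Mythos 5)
Your plan for parts (1)--(3) follows essentially the same route as the paper (the two-ball/Federer estimate localized to $A_{\lambda}$, the scaling identity for $\rho(A,\cdot)$, a Lipschitz extension plus \ref{approx. versus pointwise diff. for Lip functions}, and the radial differentiation for the kernel), but there is a genuine gap in your ``chain-rule bookkeeping'' for parts (4) and (5). Your justification that $\ap\Der h_{t}(x)$ is invertible is not valid: since $\ap\Der h_{t}(x)=t\,\mathrm{id}+(1-t)\ap\Der\bm{\xi}_{A}(x)$, the direction $\bm{\nu}_{A}(x)$ is precisely where non-degeneracy is obvious, because $\ap\Der h_{t}(x)(\bm{\nu}_{A}(x))=t\bm{\nu}_{A}(x)$; the danger lies in the other directions, where a vector $v$ is annihilated exactly when $\ap\Der\bm{\xi}_{A}(x)(v)=\tfrac{t}{t-1}v$, i.e.\ when $\ap\Der\bm{\xi}_{A}(x)$ has a negative real eigenvalue (for $0<t<1$) or one larger than $\lambda(\lambda-1)^{-1}$ (for $1<t<\lambda$). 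Excluding this requires the spectral bounds of part (6), which you prove only afterwards --- and which the paper itself deduces from the injectivity of $\ap\Der h_{s}(x)$ for all $0<s<\lambda$, obtained not by eigenvalue reasoning but from the bi-Lipschitz property of $h_{t}|A_{\lambda}$ via \ref{bi-Lip and inverse of ap Df}. As ordered, your plan is circular unless you first carry out (and actually write down) the monotonicity-to-positive-semidefiniteness argument you sketch under (6), or else invoke \ref{bi-Lip and inverse of ap Df} as the paper does.

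A second, related gap: the assertions that $h_{t}$ ``carries density points to density points'' and that the displayed identities follow from ``composition and inverse-function formulas for approximate differentials'' are not covered by \ref{composition and approx diff} or anything else in the appendix. That $A_{\lambda/t}=h_{t}[A_{\lambda}]$ has Lebesgue density $1$ at $h_{t}(x)$ is non-trivial for a bi-Lipschitz map defined only on a set (a direct measure comparison yields only positive lower density), and the paper cites Buczolich's theorem \cite{MR1100645} for exactly this point; likewise, the approximate differentiability of $h_{t^{-1}}$ at $h_{t}(x)$ with $\ap\Der h_{t^{-1}}(h_{t}(x))=\ap\Der h_{t}(x)^{-1}$ is proved in the paper by an explicit comparison of the exceptional sets of the two linearizations (the $P_{\epsilon}$, $Q_{\epsilon}$ argument), not by quoting a ready-made inverse-function lemma. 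Both facts are indispensable: without density $1$ of $A_{\lambda/t}$ at $h_{t}(x)$ you cannot even assert $h_{t}(x)\in D(A_{\lambda/t})$ in (4), nor apply \ref{composition and approx diff} to $\Psi\circ h_{t^{-1}}$ to get (5). These steps are fixable along the paper's lines, but they constitute the real substance of (4)--(5), which your proposal treats as routine; the remaining items, including the symmetry transfer in (6) via the functional relation between $\ap\Der\bm{\xi}_{A}(h_{t}(x))$ and $\ap\Der\bm{\xi}_{A}(x)$, do match the paper's argument.
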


\begin{proof*}[Proof of \eqref{lipschitzianity nearest point projection}]
If $ x \in A_{\lambda} $ and $ y \in A_{\lambda} $, then we apply \cite[4.7(1)]{2017arXiv170309561M} with $ q $, $ a $, $ b $ and $ v $ replaced by $ \lambda |x - \bm{\xi}_{A}(x)| $, $ \bm{\xi}_{A}(x)$, $ \bm{\xi}_{A}(y) $ and $ x - \bm{\xi}_{A}(x) $ respectively, to infer that 
	\begin{equation*}
	(\bm{\xi}_{A}(y)-\bm{\xi}_{A}(x)) \bullet (x - \bm{\xi}_{A}(x)) \leq (2\lambda)^{-1} |\bm{\xi}_{A}(x) - \bm{\xi}_{A}(y)|^{2},
	\end{equation*}
	and symmetrically,
	\begin{equation*}
	(\bm{\xi}_{A}(x)-\bm{\xi}_{A}(y)) \bullet (y - \bm{\xi}_{A}(y)) \leq (2\lambda)^{-1} |\bm{\xi}_{A}(x) - \bm{\xi}_{A}(y)|^{2}.
	\end{equation*}
	Combining the two equations we get
	\begin{equation*}
	|\bm{\xi}_{A}(x) - \bm{\xi}_{A}(y)||x-y| \geq (\bm{\xi}_{A}(x) - \bm{\xi}_{A}(y)) \bullet (x-y) \geq \lambda^{-1}(\lambda -1)|\bm{\xi}_{A}(x) - \bm{\xi}_{A}(y)|^{2}.
	\end{equation*}	
	
	By \ref{n.p.p. along radial directions} one infers $ \bm{\xi}_{A}(h_{t}(x)) = \bm{\xi}_{A}(x) $ and $ h_{t^{-1}}(h_{t}(x)) = x $ whenever $ x \in A_{\lambda} $, and $ h_{t}[A_{\lambda}] \subseteq A_{\lambda / t} $. Since $ 0 < t^{-1} < \lambda / t $, the same conclusions hold with $ \lambda $ and $ t $ replaced by $ \lambda / t $ and $ t^{-1} $ respectively. Henceforth \eqref{lipschitzianity nearest point projection} is proved. 
\end{proof*}
\begin{proof*}[Proof of \eqref{A lambda:2}]
Since $ \bm{\xi}_{A}|A_{\lambda} $ is Lipschitzian then $ \Leb{n}(A_{\lambda} \sim D(A_{\lambda}) ) =0 $ by \cite[2.11]{2017arXiv170107286S}.
\end{proof*}
\begin{proof*}[Proof of \eqref{differentiable extension of psi}]
Since $ \bm{\xi}_{A}|A_{\lambda}$ is Lipschitzian there exists a Lipschitzian function $F: \Real{n} \rightarrow \Real{n}$ such that $ F|A_{\lambda} = \bm{\xi}_{A}|A_{\lambda} $ by \cite[2.10.43]{MR0257325}. Then, by \ref{approx. versus pointwise diff. for Lip functions}, the map $F$ is differentiable at every $x\in D(A_{\lambda})$ with
		\begin{equation*}
			\Der F(x) = \ap \Der \bm{\xi}_{A}(x).
		\end{equation*}
	If $ x \in D(A_{\lambda})$ then $ x + s \bm{\nu}_{A}(x) \in A_{\lambda} $ and
	\begin{equation*}
		 F(x+s\bm{\nu}_{A}(x)) = \bm{\xi}_{A}(x+ s\bm{\nu}_{A}(x)) = \bm{\xi}_{A}(x)
	\end{equation*}
	for $ -\bm{\delta}_{A}(x) < s < (\lambda -1)\bm{\delta}_{A}(x) $. Differentiating with respect to $ s $ we get that
	\begin{equation*}
	\ap\Der\bm{\xi}_{A}(x)(\bm{\nu}_{A}(x)) = \Der F(x)(\bm{\nu}_{A}(x)) =0
	\end{equation*}
	and $ \ap\Der\bm{\nu}_{A}(x)(\bm{\nu}_{A}(x)) =0 $ by \ref{range of the ap diff of the n.p.p.}. Let $ G : \Real{n} \rightarrow \Real{n} $ be any function such that $ G(x) = \bm{\delta}_{A}(x)^{-1}(x-F(x) ) $ for $ x \in \Real{n} \sim A $. Noting \ref{range of the ap diff of the n.p.p.} and \cite[2.8]{2017arXiv170107286S} we infer that $ G $ is differentiable at every $ x \in D(A_{\lambda}) $ with $ \Der G(x) = \ap \Der \bm{\nu}_{A}(x) $. Henceforth $ \Psi = (F,G) $ and \eqref{differentiable extension of psi} is proved.
\end{proof*}
\begin{proof*}[Proof of \eqref{A lambda:4} and \eqref{n.p.p.and dilations}]
	Let $ x \in D(A_{\lambda}) $ and $ y = h_{t}(x) $. Then $ h_{t} $ is approximately differentiable at $ x $ and, noting \eqref{lipschitzianity nearest point projection}, we can use \ref{bi-Lip and inverse of ap Df} and \cite[Theorem 1]{MR1100645} to infer that $ \ap \Der h_{t}(x) $ is an isomorphism of $\Real{n}$ and 
	\begin{equation*}
	\Ldensity{n}{\Real{n} \sim  A_{\lambda/t}}{y} = 0.
	\end{equation*}
	For $ \epsilon > 0 $ we define
	\begin{equation*}
	P_{\epsilon} = A_{\lambda} \cap \{ w : |h_{t}(w) - h_{t}(x) - \ap \Der h_{t}(x)(w-x) | \geq \epsilon |w-x| \},
	\end{equation*}
	\begin{equation*}
Q_{\epsilon}= A_{\lambda/t}\cap \{ z :  |h_{t^{-1}}(z) - x -  \ap \Der h_{t}(x)^{-1}(z-y) | \geq \epsilon |z-y| \},
	\end{equation*}
we observe that $ Q_{\epsilon} \subseteq h_{t}(P_{C\epsilon}) $ for $ C= \| \ap \Der h_{t}(x)^{-1}\|^{-1}(\Lip(h_{t}|A_{\lambda})^{-1})^{-1} $ and 
\begin{equation*}
	\mathbf{B}(h_{t}(x),r) \cap Q_{\epsilon} \subseteq h_{t}[ P_{C\epsilon}  \cap \mathbf{B}(x, (\Lip(h_{t}|A_{\lambda})^{-1})r)  ] \quad \textrm{for $ r > 0 $,}
\end{equation*}
whence we deduce that
\begin{equation*}
	\Ldensity{n}{Q_{\epsilon}}{h_{t}(x)} =0 \quad \textrm{for every $ \epsilon > 0 $,}
\end{equation*}
the map $ h_{t^{-1}} $ is approximately differentiable at $ y $ and
	\begin{equation*}
\ap \Der h_{t^{-1}}(y)  = \ap \Der h_{t}(x)^{-1}. 
\end{equation*}
	Let $\Psi$ be an extension of $ \bm{\psi}_{A}|A_{\lambda} $ given by \eqref{differentiable extension of psi}. If $ z \in A_{\lambda/t} $, being $ \lambda > 1 $ and noting \ref{n.p.p. along radial directions}, we get that
	\begin{equation*}
	\Psi(h_{t^{-1}}(z)) = \bm{\psi}_{A}(h_{t^{-1}}(z)) = \bm{\psi}_{A}(z)
	\end{equation*}
and we use \ref{composition and approx diff} to infer that $ \bm{\psi}_{A} $ is approximately differentiable at $ y $ with 
	\begin{equation*}
	\ap \Der \bm{\psi}_{A}(y) = \ap \Der \bm{\psi}_{A}(x) \circ \ap \Der h_{t^{-1}}( y).
	\end{equation*}
\end{proof*}
\begin{proof*}[Proof of \eqref{symmetry}]
	If $ \mu \in \Real{} $, $ v \in \mathbf{S}^{n-1}$ and $\ap \Der \bm{\xi}_{A}(x)(v) = \mu v $ then, noting that $\ap \Der h_{s}(x) $ is injective for $ 0 < s < \lambda $ by \eqref{n.p.p.and dilations}, we infer that
	\begin{equation*}
		 (1-s)\mu + s \neq 0 \quad \textrm{for $ 0 < s < \lambda $,}
	\end{equation*}
	whence we deduce that
	\begin{equation*}
	0 \leq \mu \leq \lambda (\lambda -1)^{-1}.	
	\end{equation*}
If $ \mu \neq 0 $, $ v \in \mathbf{S}^{n-1}$ and $\ap \Der \bm{\nu}_{A}(x)(v) = \mu v$ then 
\begin{equation*}
	v \bullet \bm{\nu}_{A}(x)=0 \quad \textrm{and} \quad  \ap \Der \bm{\xi}_{A}(x)(v) = (1 - \bm{\delta}_{A}(x)\mu)v
\end{equation*}
by \ref{range of the ap diff of the n.p.p.}, which implies $ (1-\lambda)^{-1}\bm{\delta}_{A}(x)^{-1}\leq \mu \leq \bm{\delta}_{A}(x)^{-1} $.
	
	If $\ap \Der \bm{\xi}_{A}(x)$ is symmetric, then there exists an orthonormal basis $ v_{1}, \ldots , v_{n} $ of $\Real{n} $ and $ 0 \leq \mu_{1} \leq  \ldots \leq \mu_{n} $ such that $ \ap \Der \bm{\xi}_{A}(x)(v_{i}) = \mu_{i}v_{i} $ for $ i = 1 , \ldots , n $ and \eqref{n.p.p.and dilations} implies that
	\begin{equation*}
		\ap \Der \bm{\xi}_{A}(h_{t}(x))(v_{i}) = \mu_{i} ((1-t)\mu_{i} + t)^{-1} v_{i} \quad \textrm{whenever $ i = 1 , \ldots , n $}.
	\end{equation*}
Therefore $ \ap \Der \bm{\xi}_{A}(h_{t}(x)) $ is symmetric and so is  $ \ap \Der \bm{\nu}_{A}(h_{t}(x)) $ by \ref{range of the ap diff of the n.p.p.}.
\end{proof*}

\begin{Remark}\label{range of the ap diff of the n.p.p. and dilations}
	Combining \ref{range of the ap diff of the n.p.p.} and \ref{A lambda}\eqref{n.p.p.and dilations}, if $ 1 < \lambda < \infty $, $ 0 < t < \lambda $, $ x \in D(A_{\lambda}) $ and $ T = \Real{n} \cap \{v : v \bullet \bm{\nu}_{A}(x) =0 \} $, then 
	\begin{equation*}
	\im \ap \Der \bm{\xi}_{A}(h_{t}(x)) = \im \ap \Der \bm{\xi}_{A}(x) \subseteq T,
	\end{equation*}
	\begin{equation*}
	\im \ap \Der \bm{\nu}_{A}(h_{t}(x)) = \im \ap \Der \bm{\nu}_{A}(x) \subseteq T.
	\end{equation*}  
\end{Remark}

Here the tangential and curvature properties of the distance sets $ S(A,r) $ are expressed in terms of the spherical image map of $ A $ and its approximate differential.

\begin{Lemma}\label{fine properties of level sets}
	If $ A $ is a closed subset of $ \Real{n} $ then for $ \Leb{1} $ a.e.\ $ r > 0 $ and for $ \Haus{n-1} $ a.e.\ $ x \in S(A,r) $ the following three statements hold:
\begin{equation*}
\Haus{n-1}\bigg( S(A,r) \sim \bigcup_{\lambda > 1}D(A_{\lambda})\bigg) =0,
\end{equation*}
	\begin{equation*}
	\ap \Tan(S(A,r),x) = \{ v : v \bullet \bm{\nu}_{A}(x) =0  \},
	\end{equation*}
	\begin{equation*}
	\ap \Der^{2} S(A,r)(x)(u,v) \bullet \bm{\nu}_{A}(x) = - \ap \Der \bm{\nu}_{A}(x)(u) \bullet v
	\end{equation*}
	for $ u,v \in \ap \Tan(S(A,r),x) $. 
\end{Lemma}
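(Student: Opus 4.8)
The plan is to read the second fundamental form of $S(A,r)$ — which exists by the second order rectifiability of the level sets — against the classical second order Taylor expansion of the distance function, and to transfer the a.e.\ statements from $\Real{n}$ to the level sets by coarea slicing in $r$. First I would collect the regularity available at $\Leb{n}$ almost every $x$ with $\bm{\delta}_{A}(x)>0$: writing $\tfrac12\bm{\delta}_{A}^{2}=\tfrac12|\cdot|^{2}-g$ with $g=\sup\{\,\cdot\bullet a-\tfrac12|a|^{2}:a\in A\,\}$ convex, the semiconcavity of $\bm{\delta}_{A}^{2}$ and Alexandrov's theorem (this is essentially the content of \cite{MR0310150}) produce an $\Leb{n}$ negligible set off which $\bm{\delta}_{A}^{2}$ admits a second order Taylor expansion, $\bm{\xi}_{A}$ is approximately differentiable with symmetric $\ap\Der\bm{\xi}_{A}$, and hence, by \ref{range of the ap diff of the n.p.p.}, $\bm{\delta}_{A}$ is differentiable with $\Der\bm{\delta}_{A}(x)(v)=v\bullet\bm{\nu}_{A}(x)$ and $\bm{\nu}_{A}$ is approximately differentiable. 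On the other hand, by \ref{structure of level sets}\eqref{structure of level sets:3} and \cite{2017arXiv170107286S} (cf.\ the remark after \ref{ap differentials}), for every $r>0$ the set $S(A,r)$ is approximately differentiable of order $2$ at $\Haus{n-1}$ almost every $x\in S(A,r)$. Applying the coarea formula to the $1$-Lipschitzian function $\bm{\delta}_{A}$, for $\Leb{1}$ almost every $r>0$ the level set $S(A,r)$ meets the first negligible set in an $\Haus{n-1}$ null set; from now on fix such an $r$ and a point $x\in S(A,r)$ avoiding all the exceptional sets, put $T=\ap\Tan(S(A,r),x)$, and let $P:T\to T^{\perp}$ be the polynomial of degree at most $2$ with $P(0)=0$ and $\Der P(0)=0$ given by \ref{ap diff for sets}.

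To identify the tangent plane, fix a unit $v\in T$; by \ref{ap diff for sets}\eqref{ap diff for sets: dimension} there are, for arbitrarily small $t>0$, points $y\in S(A,r)$ with $|y-x-tv|$ arbitrarily small relative to $t$, and since $\bm{\delta}_{A}(y)=r=\bm{\delta}_{A}(x)$ while $\bm{\delta}_{A}$ is differentiable at $x$ with differential $v\mapsto v\bullet\bm{\nu}_{A}(x)$, letting $t\to0$ forces $v\bullet\bm{\nu}_{A}(x)=0$. As $\dim T=n-1$ this yields $T=\{v:v\bullet\bm{\nu}_{A}(x)=0\}$, which together with the reduction settles the first two assertions.

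For the curvature identity, write $P(\chi)=p(\chi)\bm{\nu}_{A}(x)$ for a scalar polynomial $p$ on $T$ (legitimate since $T^{\perp}=\{s\bm{\nu}_{A}(x):s\in\Real{}\}$); then $p(\chi)=\tfrac12 Q(\chi,\chi)$ with $Q=\Der^{2}p(0)$, so $\ap\Der^{2}S(A,r)(x)(u,v)\bullet\bm{\nu}_{A}(x)=Q(u,v)$ for $u,v\in T$. Now pick $y\in S(A,r)$ with $y-x$ close to $tv$, $v\in T$ a unit vector and $t>0$ small, which in addition realises the approximation in \ref{ap diff for sets}\eqref{ap diff for sets: approx by polynomials} (such $y$ exist by \ref{ap diff for sets}\eqref{ap diff for sets: dimension}), decompose $y-x=\chi+p(\chi)\bm{\nu}_{A}(x)+e$ with $\chi\in T$ and $|e|\le\epsilon|y-x|^{2}$, and insert this into the second order expansion of $\bm{\delta}_{A}^{2}$ at $x$, which by the convexity of $g$ has the form $\bm{\delta}_{A}(y)^{2}=\bm{\delta}_{A}(x)^{2}+2(x-\bm{\xi}_{A}(x))\bullet(y-x)+|y-x|^{2}-\ap\Der\bm{\xi}_{A}(x)(y-x)\bullet(y-x)+o(|y-x|^{2})$ and satisfies $x-\bm{\xi}_{A}(x)=\bm{\delta}_{A}(x)\bm{\nu}_{A}(x)$. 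Using $\bm{\delta}_{A}(y)^{2}=\bm{\delta}_{A}(x)^{2}$, collecting the quadratic terms in $\chi$, and letting $t\to0$ and then $\epsilon\to0$, one obtains $Q(u,v)=\bm{\delta}_{A}(x)^{-1}\bigl(\ap\Der\bm{\xi}_{A}(x)(u)\bullet v-u\bullet v\bigr)$ for $u,v\in T$ (the two sides being symmetric bilinear forms on $T$, the diagonal values determine them). Since \ref{range of the ap diff of the n.p.p.} gives $\ap\Der\bm{\nu}_{A}(x)(u)=\bm{\delta}_{A}(x)^{-1}\bigl(u-\ap\Der\bm{\xi}_{A}(x)(u)\bigr)$ for $u\in T$, this is exactly $\ap\Der^{2}S(A,r)(x)(u,v)\bullet\bm{\nu}_{A}(x)=Q(u,v)=-\ap\Der\bm{\nu}_{A}(x)(u)\bullet v$, the remaining assertion.

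The main obstacle is the last paragraph: reconciling the intrinsic second fundamental form $\ap\Der^{2}S(A,r)(x)$, which is read off from the density--graph description of the rectifiable set $S(A,r)$ in \ref{ap diff for sets}, with the extrinsic second order behaviour of $\bm{\delta}_{A}^{2}$ — this forces simultaneous use of \ref{ap diff for sets}\eqref{ap diff for sets: dimension}, to produce enough points of $S(A,r)$ approximating $x$ in every tangential direction, and of \ref{ap diff for sets}\eqref{ap diff for sets: approx by polynomials}, to control the normal error $e$, while carefully discarding the $o(|y-x|^{2})$ and the $\epsilon$ contributions when extracting the quadratic form $Q$. A subsidiary point that must be arranged with some care — and which is precisely where the semiconcavity of $\bm{\delta}_{A}^{2}$ (not merely first order information) is needed — is the a.e.\ second order differentiability of $\bm{\delta}_{A}^{2}$ together with the symmetry of $\ap\Der\bm{\xi}_{A}$ that makes $(u,v)\mapsto\ap\Der\bm{\xi}_{A}(x)(u)\bullet v$ symmetric on $T$.
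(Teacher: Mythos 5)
Your proof is correct in substance, but it follows a genuinely different route from the paper. The paper never invokes Alexandrov/Asplund here: it first shows $S(A,r)\cap\bigcup_{\lambda>1}A_{\lambda}=\partial^{+}P_{r}$, uses the extension of the Rataj--Winter lemma \ref{Extension of a lemme of Rataj-Winter} to get $\Haus{n-1}(S(A,r)\sim\bigcup_{\lambda>1}A_{\lambda})=0$ for all but countably many $r$, then coarea and \ref{A lambda}\eqref{A lambda:2} to place $\Haus{n-1}$ a.e.\ point of a.e.\ level set in some $D(A_{\lambda})$; the differentiable extension \ref{A lambda}\eqref{differentiable extension of psi} together with Federer's density theorems makes $\bm{\psi}_{A}$ $(\Haus{n-1}\restrict S(A,r),n-1)$ approximately differentiable there, the tangent identification comes from \cite[3.1.6, 3.2.11, 3.1.21]{MR0257325} plus \ref{structure of level sets} and \cite[3.23]{2017arXiv170107286S}, and the second-order identity is then obtained by the abstract transfer lemma \cite[3.25]{2017arXiv170107286S}. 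You instead put Asplund/Alexandrov at the front (semiconcavity of $\bm{\delta}_{A}^{2}$, true second-order Taylor expansion of the convex potential, symmetric $\ap\Der\bm{\xi}_{A}$ a.e.), and then verify the conclusion by hand against Definition \ref{ap diff for sets}, testing the expansion on points of $S(A,r)$ produced by the density condition and lying near $\gr P$. What your route buys is a shorter, largely self-contained argument that bypasses $\rho(A,\cdot)$, $A_{\lambda}$, $\partial^{+}P_{r}$, \ref{Extension of a lemme of Rataj-Winter} and \cite[3.25]{2017arXiv170107286S}; the computation collecting the quadratic terms is sound, and your remark that one needs the genuine (not merely approximate) expansion because $S(A,r)$ is $\Leb{n}$ null is exactly the right point. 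What the paper's route buys is architectural: the lemma is precisely the paper's substitute for Asplund's theorem (the a.e.\ symmetry of $\ap\Der\bm{\xi}_{A}$ is \emph{derived} from this lemma in \ref{approximate diff of nearest point pr}), so the paper keeps the symmetry statement as output rather than input, and its intermediate steps mesh with the later notions of regular points and $S_{\lambda}(A,r)$. One point you should make explicit: you need the strong form of Alexandrov/Asplund asserting, at $\Leb{n}$ a.e.\ $x$, \emph{simultaneously} the second-order expansion of the convex potential and the approximate differentiability of $\bm{\xi}_{A}$ with the \emph{same} symmetric matrix as Hessian; without that identification you are not entitled to insert $\ap\Der\bm{\xi}_{A}(x)$ into the expansion of $\bm{\delta}_{A}^{2}$. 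This is standard and citable, but it is the load-bearing step of your argument, and logically it replaces (rather than reproduces) the mechanism the paper uses.
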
	

\begin{proof}
	We define $P_{r} = \{  x : \bm{\delta}_{A}(x) \leq r \}$ for $ r > 0 $ and $B = \bigcup_{\lambda > 1}A_{\lambda}$.
First we prove that 
\begin{equation*}
	S(A,r) \cap B = \partial^{+} P_{r}  \quad \textrm{for every $ r > 0 $.}
\end{equation*}
Let $ x \in \partial^{+}P_{r} $. Then $ x \in S(A,r) $ and we choose $ a \in A $ with $ |x-a|= r $, $ u \in \mathbf{S}^{n-1} $ and $ s > 0 $ such that $ \mathbf{U}(x+su,s) \cap P_{r} = \varnothing $. Noting that $\bm{\delta}_{A}(x+su) > r$ we apply \cite[4.9]{MR0110078} to infer that 
\begin{equation*}
 \quad 	s = \bm{\delta}_{P_{r}}(x+su) = \bm{\delta}_{A}(x+su)-r
\end{equation*}
whence we deduce that $r+s \leq |x+su-a| $ and $r \leq u \bullet (x-a)$. It follows that $ x-a $ and $ u $ must be linearly dependent and $ x-a = ru $. Noting \ref{n.p.p. along radial directions} we conclude that $ \rho(A,x) \geq r^{-1}(r+s) $. We assume now $ x \in A_{\lambda} \cap S(A,r) $ for $ \lambda > 1 $. Since $ \bm{\delta}_{A}(\bm{\xi}_{A}(x) + \lambda (x-\bm{\xi}_{A}(x) )  ) = \lambda r $ it follows from \cite[4.9]{MR0110078} that 
\begin{equation*}
\bm{\delta}_{P_{r}}(\bm{\xi}_{A}(x) + \lambda (x-\bm{\xi}_{A}(x) )) = (\lambda-1 )r 
\end{equation*}
and, noting that $\bm{\xi}_{A}(x) + \lambda (x-\bm{\xi}_{A}(x) ) = x + (\lambda-1)r \bm{\nu}_{A}(x) $, we conclude that $ x \in \partial^{+}P_{r} $.

It follows from \ref{Extension of a lemme of Rataj-Winter} that $\Haus{n-1}(S(A,r) \sim B ) =0$ for all, but countably many $ r > 0 $, whence we deduce using \ref{A lambda}\eqref{A lambda:2} and Coarea formula that
\begin{equation}\label{approximate diff nearest point pr:1}
	\Haus{n-1}\bigg( S(A,r) \sim \bigcup_{\lambda > 1}D(A_{\lambda})\bigg) =0 \quad \textrm{for $ \Leb{1} $ a.e.\ $ r > 0 $.}
\end{equation}

 It follows from \ref{A lambda}\eqref{differentiable extension of psi} and \cite[2.10.19(4), 3.2.16]{MR0257325} that for all $ r > 0 $, $ \lambda > 1 $ and for $ \Haus{n-1} $ a.e.\ $ x \in S(A,r) \cap D(A_{\lambda}) $,
 \begin{equation}\label{approximate diff nearest point pr:4}
 	\Hdensity{n-1}{S(A,r) \sim A_{\lambda}}{x} =0
 \end{equation}
and $ \bm{\psi}_{A} $ is $(\Haus{n-1}\restrict S(A,r), n-1) $ approximately differentiable\footnote{Given a measure $ \phi $ on a normed vector space and a positive integer $ m $, we refer to \cite[3.2.16]{MR0257325} for the notion of $(\mu, m)$ approximate differentiability.} at $ x $ with
\begin{equation}\label{approximate diff nearest point pr:2}
(\Haus{n-1}\restrict S(A,r), n-1)\ap \Der \bm{\psi}_{A}(x) = \ap \Der \bm{\psi}_{A}(x).
\end{equation}

Moreover we claim that for $ \Leb{1} $ a.e.\ $ r > 0 $ and for $ \Haus{n-1} $ a.e.\ $ x \in S(A,r) $
\begin{equation}\label{approximate diff nearest point pr:3}
	\ap \Tan(S(A,r),x) = \{ v : v \bullet \bm{\nu}_{A}(x) =0  \}.
\end{equation}
To prove \eqref{approximate diff nearest point pr:3}, first we notice that it follows from \cite[3.1.6, 3.2.11, 3.1.21]{MR0257325}, \cite[4.8(3)]{MR0110078} and \eqref{Unp:1} that $\bm{\delta}_{A}$ is differentiable at $ x $ with $  \grad \bm{\delta}_{A}(x) = \bm{\nu}_{A}(x) $ and $ \Tan(S(A,r), x) \subseteq \{ v : v \bullet \grad \bm{\delta}_{A}(x) =0   \} $  for $ \Leb{1} $ a.e.\ $ r > 0 $ and for $ \Haus{n-1} $ a.e.\ $ x \in S(A,r) $; second we employ \ref{structure of level sets} and \cite[3.23]{2017arXiv170107286S}.

Combining \eqref{approximate diff nearest point pr:1}-\eqref{approximate diff nearest point pr:3} with \ref{A lambda}\eqref{lipschitzianity nearest point projection} and \cite[3.25]{2017arXiv170107286S} we conclude that
\begin{equation*}
\ap \Der^{2} S(A,r)(x)(u,v) \bullet \bm{\nu}_{A}(x) = - \ap \Der \bm{\nu}_{A}(x)(u) \bullet v 
\end{equation*}
for $ u,v \in \ap \Tan(S(A,r),x) $, for $ \Haus{n-1} $ a.e.\ $ x \in S(A,r) $ and \mbox{for $ \Leb{1} $ a.e.\ $ r > 0 $.}
\end{proof}

\begin{Definition}\label{regular points}
	If $ A \subseteq \Real{n} $ is a closed set we say that $ x \in U(A) $ is a \emph{regular point of $ \bm{\xi}_{A} $} if and only if $ \ap \lim_{y \to x} \rho(A,y) = \rho(A,x)>1$ and $ \bm{\xi}_{A}$ is approximately differentiable at $ x $ with symmetric approximate differential.
	
	The set of regular points of $ \bm{\xi}_{A} $ is denoted by $ R(A)$.
\end{Definition}

\begin{Theorem}\label{approximate diff of nearest point pr}
	If $ A $ is a closed subset of $ \Real{n} $ then $ \Leb{n}(\Real{n} \sim (R(A) \cup A)) =0 $. If $ x \in R(A) $ then $ \bm{\xi}_{A}(x) + t(x - \bm{\xi}_{A}(x)) \in R(A)$ for every $ 0 < t < \rho(A,x) $.
\end{Theorem}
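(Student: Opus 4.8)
The plan is to verify, for $\Leb{n}$ almost every $x\in U(A)$, the three requirements in the definition of $R(A)$ in \ref{regular points} --- that $\rho(A,x)>1$, that $\ap\liminf_{y\to x}\rho(A,y)\ge\rho(A,x)$, and that $\bm{\xi}_{A}$ is approximately differentiable at $x$ with symmetric approximate differential --- and then to propagate these along each ray $t\mapsto\bm{\xi}_{A}(x)+t(x-\bm{\xi}_{A}(x))$ using the dilations $h_{t}$ of \ref{A lambda}. First I would record that $\Leb{n}(U(A)\setminus U^{+}(A))=0$, where $U^{+}(A)=\bigcup_{\lambda>1}A_{\lambda}$: the identity $S(A,r)\cap U^{+}(A)=\Bd^{+}P_{r}$ obtained inside the proof of \ref{fine properties of level sets}, together with \ref{Extension of a lemme of Rataj-Winter}, gives $\Haus{n-1}(S(A,r)\setminus U^{+}(A))=0$ for all but countably many $r>0$, and the coarea formula (legitimate since $\bm{\delta}_{A}$ is Lipschitzian) yields the claim; in particular $\rho(A,x)>1$ for $\Leb{n}$ a.e.\ $x\in U(A)$. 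Next, for each rational $q>1$ we have $\Leb{n}(A_{q}\setminus D(A_{q}))=0$ by \ref{A lambda}\eqref{A lambda:2}; let $N$ be the union of these countably many null sets and of $U(A)\setminus U^{+}(A)$. If $x\in U(A)\setminus N$, then for every rational $q$ with $1<q<\rho(A,x)$ we have $x\in A_{q}\setminus N\subseteq D(A_{q})$, hence $\Ldensity{n}{\Real{n}\setminus A_{q}}{x}=0$; since $\bm{\xi}_{A}$ coincides with $\bm{\xi}_{A}|A_{q}$ on $A_{q}$ this makes $\bm{\xi}_{A}$ approximately differentiable at $x$, and since $\Real{n}\setminus A_{q}=\{y:\rho(A,y)<q\}$ it also gives $\ap\liminf_{y\to x}\rho(A,y)\ge q$; letting $q$ increase to $\rho(A,x)$ we obtain $\ap\liminf_{y\to x}\rho(A,y)\ge\rho(A,x)$.

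The remaining point for almost every $x$ is the symmetry of $\ap\Der\bm{\xi}_{A}(x)$, and this is the one ingredient that does not come from the lemmas of Section~\ref{section: approx. diff of n.p.p.}: it is the observation of \cite{MR0310150}. The function $f=\tfrac12(|\cdot|^{2}-\bm{\delta}_{A}^{2})$ equals $\sup_{a\in A}\bigl(x\bullet a-\tfrac12|a|^{2}\bigr)$ and is therefore convex on $\Real{n}$, hence twice differentiable in Alexandrov's sense at $\Leb{n}$ a.e.\ $x$; wherever $\bm{\delta}_{A}$ is differentiable and $\bm{\xi}_{A}$ single-valued one has $\grad f=\bm{\xi}_{A}$, so $\bm{\xi}_{A}$ is approximately differentiable at $\Leb{n}$ a.e.\ $x$ with $\ap\Der\bm{\xi}_{A}(x)=\Der^{2}f(x)$, which is symmetric. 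Combined with the previous paragraph this gives $\Leb{n}(U(A)\setminus R(A))=0$.

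For the second assertion fix $x\in R(A)$, set $\rho=\rho(A,x)>1$, let $0<t<\rho$ and $y=h_{t}(x)$, and choose any $\lambda$ with $\max\{1,t\}<\lambda<\rho$. Since $\ap\liminf_{z\to x}\rho(A,z)\ge\rho>\lambda$ we get $\Ldensity{n}{\Real{n}\setminus A_{\lambda}}{x}=0$, and together with the approximate differentiability of $\bm{\xi}_{A}$ at $x$ this shows $x\in D(A_{\lambda})$. Now \ref{A lambda}\eqref{lipschitzianity nearest point projection} yields $\bm{\xi}_{A}(y)=\bm{\xi}_{A}(x)$, $\bm{\nu}_{A}(y)=\bm{\nu}_{A}(x)$ and $y\in A_{\lambda/t}$; letting $\lambda$ run up to $\rho$ gives $\rho(A,y)\ge\rho/t$, and if $\rho(A,y)>\rho/t$ one picks $\rho/t<\mu<\rho(A,y)$ and applies the inverse bijection $h_{t^{-1}}|A_{\mu}\to A_{\mu t}$ of \ref{A lambda}\eqref{lipschitzianity nearest point projection} to get $x\in A_{\mu t}$, i.e.\ $\rho\ge\mu t>\rho$, a contradiction; hence $\rho(A,y)=\rho/t>1$. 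By \ref{A lambda}\eqref{A lambda:4}, $y=h_{t}(x)\in D(A_{\lambda/t})$ for every admissible $\lambda$, so $y\in D(A_{\mu})$ for all rational $\mu$ with $1<\mu<\rho(A,y)$; exactly as above this makes $\bm{\xi}_{A}$ approximately differentiable at $y$ and gives $\ap\liminf_{z\to y}\rho(A,z)\ge\rho(A,y)$. Finally $x\in D(A_{\lambda})$ and $\ap\Der\bm{\xi}_{A}(x)$ is symmetric, so \ref{A lambda}\eqref{symmetry} shows $\ap\Der\bm{\xi}_{A}(y)=\ap\Der\bm{\xi}_{A}(h_{t}(x))$ is symmetric; thus $y\in R(A)$.

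The arguments are short once the right statements are quoted; the main thing to watch is the bookkeeping of the exceptional sets, namely that one needs $x\in D(A_{q})$ for \emph{all} rational $q<\rho(A,x)$ (hence the countable union in the definition of $N$) and the elementary equivalence between the density condition $\ap\liminf_{y\to x}\rho(A,y)\ge\rho(A,x)$ and the requirement $\Ldensity{n}{\Real{n}\setminus A_{q}}{x}=0$ for all such $q$. The only genuinely external fact is the symmetry of $\ap\Der\bm{\xi}_{A}$, which rests on Alexandrov's theorem through the convexity of $\tfrac12(|\cdot|^{2}-\bm{\delta}_{A}^{2})$.
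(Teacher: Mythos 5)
Your proof is correct, and its second half (propagation along the rays via $h_{t}$, quoting \ref{A lambda}\eqref{A lambda:4}\eqref{symmetry} after checking $x\in D(A_{\lambda})$ for $t<\lambda<\rho(A,x)$) is essentially identical to the paper's argument; the genuine difference is in how you establish, for $\Leb{n}$ a.e.\ $x\in U(A)$, the symmetric approximate differentiability of $\bm{\xi}_{A}$ and the conditions on $\rho(A,\cdot)$. The paper stays internal to its own machinery: symmetry is deduced from \ref{fine properties of level sets} together with \ref{range of the ap diff of the n.p.p.} and the coarea formula, i.e.\ from the second order rectifiability of the level sets $S(A,r)$ (resting on \ref{structure of level sets} and \cite{2017arXiv170309561M}, \cite{2017arXiv170107286S}), the symmetry of $\ap\Der^{2}S(A,r)(x)$ forcing that of $\ap\Der\bm{\nu}_{A}(x)$ and hence of $\ap\Der\bm{\xi}_{A}(x)$; the approximate continuity $\ap\lim_{y\to x}\rho(A,y)=\rho(A,x)$ a.e.\ is obtained in one stroke from Borel measurability via \cite[2.9.13]{MR0257325}, and $\rho(A,x)>1$ a.e.\ is implicit because the points produced by \ref{fine properties of level sets} lie in $\bigcup_{\lambda>1}D(A_{\lambda})$. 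You instead import the symmetry from Asplund \cite{MR0310150} through the convexity of $\tfrac12(|\cdot|^{2}-\bm{\delta}_{A}^{2})$, and redo the $\rho$-estimates by hand (the $\bigcup_{\lambda>1}A_{\lambda}$ statement via \ref{Extension of a lemme of Rataj-Winter} and coarea, then $D(A_{q})$ for rational $q$). Both routes are sound: the paper's buys independence from Alexandrov/Asplund — its level-set argument in effect reproves that result, which is why the theorem is presented as a refinement of \cite{MR0310150} — while yours is shorter and bypasses most of \ref{fine properties of level sets}, at the price of quoting externally that the gradient of a convex function is a.e.\ approximately differentiable with \emph{symmetric} approximate differential (this is precisely Asplund's theorem, a bit more than the bare Alexandrov expansion, so the citation is doing real work there). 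Two cosmetic points, neither affecting correctness: $\Real{n}\sim A_{q}$ equals $\{y:\rho(A,y)<q\}$ only up to $A$ and the $\Leb{n}$ null set $\Real{n}\sim\dmn\bm{\xi}_{A}$ (which is all you need, since $\{y:\rho(A,y)<q\}\subseteq\Real{n}\sim A_{q}$); and the inference ``$y\in D(A_{\lambda/t})$ for admissible $\lambda$, so $y\in D(A_{\mu})$ for all $1<\mu<\rho(A,y)$'' is not literally immediate when $t<1$ (the values $\lambda/t$ only fill $(1/t,\rho(A,x)/t)$), but it is easily repaired since $A_{\mu}\supseteq A_{\lambda/t}$ has complement of density zero at $y$, and in any case only $\mu$ arbitrarily close to $\rho(A,y)$ are used in your argument.
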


\begin{proof}
One infers from \ref{fine properties of level sets}, \ref{range of the ap diff of the n.p.p.} and Coarea formula that $\rho(A,x) > 1$ and $\bm{\xi}_{A}$ is approximately differentiable with symmetric approximate differential for $ \Leb{n} $ a.e.\ $ x \in \Real{n} \sim A $. Since $ \rho(A, \cdot) $ is a Borel function by \ref{basic properties of rho}, it follows from \cite[2.9.13]{MR0257325} that $\ap \lim_{y \to x} \rho(A,y) = \rho(A,x)$ for $ \Leb{n} $ a.e.\ $ x \in U(A) $. Therefore,
\begin{equation*}
	\Leb{n}(\Real{n} \sim (R(A) \cup A)) =0.
\end{equation*} 

If $ x \in R(A) $ and $ 0 < t < \rho(A,x) $ we choose $ \lambda $ such that $ t < \lambda < \rho(A,x) $ and $ \lambda > 1 $ and we notice that $ x \in D(A_{\lambda}) $. It follows from \ref{A lambda}\eqref{A lambda:4}\eqref{symmetry} that $ \bm{\xi}_{A} $ is approximately differentiable at $h_{t}(x)$ (see \eqref{A lambda:3}) with symmetric approximate differential, 
\begin{equation*}
\Ldensity{n}{\Real{n} \sim A_{\lambda/t}}{h_{t}(x)} =0, \qquad \ap\liminf_{y \to h_{t}(x)}\rho(A,y) \geq \lambda /t.
\end{equation*}
Since $ \rho(A,h_{t}(x)) = t^{-1}\rho(A,x) $, we conclude that
\begin{equation*}
\ap\liminf_{y \to h_{t}(x)}\rho(A,y) \geq \rho(A,h_{t}(x)) > 1
\end{equation*}
and it follows from \ref{basic properties of rho} that $ h_{t}(x) \in R(A) $.
\end{proof}

\begin{Remark}
The fact that $ \bm{\xi}_{A} $ is approximately differentiable with symmetric approximate differential at $ \Leb{n} $ a.e.\ $ x \in U(A) $ can be alternatively deduced from \cite{MR0310150}.
\end{Remark}

\begin{Remark}\label{approximate diff of nearest point pr: remark}
	It follows from Coarea formula and \ref{approximate diff of nearest point pr} that
	\begin{equation*}
		\Haus{n-1}(S(A,r) \sim R(A)) =0 \quad \textrm{for $ \Leb{1} $ a.e.\ $ r > 0 $.}
	\end{equation*}
\end{Remark}

\begin{Definition}\label{second order differentiable level sets: definition} If $ A \subseteq \Real{n} $ is a closed set, $ 1 < \lambda < \infty $ and $ 0 < r < \infty $ then we define
	\begin{equation*}
		S_{\lambda}(A,r) = S(A,r) \cap A_{\lambda}, 
	\end{equation*}
\end{Definition}

\begin{Remark}\label{remark on S lambda}
	If $ r > 0 $ we can readily check the following properties.
	\begin{enumerate}
	\item $\bm{\psi}_{A}|S_{\lambda}(A,r)$ is a bi-Lipschitzian homeomorphism by \ref{psi is an homeomorphism} and \ref{A lambda}\eqref{lipschitzianity nearest point projection}.
	\item $\bm{\psi}_{A}[S_{\lambda}(A,r)] = (A \times \mathbf{S}^{n-1}) \cap \{ (a,u) : \bm{\delta}_{A}(a + \lambda r u ) = \lambda r  \} $ (using \ref{n.p.p. along radial directions} and \ref{basic properties of rho}), whence we deduce that $ \bm{\psi}_{A}[S_{\lambda}(A,r)] $ is a closed subset of $A \times \mathbf{S}^{n-1} $ and 
	\begin{equation*}
\bm{\psi}_{A}[S_{\lambda}(A,r)] \subseteq \bm{\psi}_{A}[S_{\lambda}(A,s)] \quad \textrm{if $ 0 < s < r < \infty $.}
	\end{equation*}
	\item It follows from \ref{structure of level sets}\eqref{structure of level sets:1} that $\bm{\psi}_{A}[S_{\lambda}(A,r)] |K$ is $ n-1 $ rectifiable for every $ K \subseteq \Real{n} $ compact.
	\item  If $\reach(A) = R > 0$ and $ 0 < r < R $ it follows from \ref{A_lambda and positive reach} that
	\begin{equation*}
	S(A,r)= S_{R/r}(A,r).
	\end{equation*}
	\end{enumerate}

	\end{Remark}

\section{Second fundamental form}\label{section: s.f.f.}

In this section we introduce the second fundamental form in \eqref{intro: sff} and we prove theorem \ref{relating principal curvatures}.

\begin{Definition}\label{generalized unt normal bundle}
	Suppose $ A $ is a closed subset of $ \Real{n} $. We define 
	\begin{equation*}
	N(A) = (A \times \mathbf{S}^{n-1}) \cap \{ (a,u) :  \bm{\delta}_{A}(a+su)=s \; \textrm{for some $ s > 0 $}\}.
	\end{equation*}
	Moreover we let $ N(A,a) = \{ v : (a,v) \in N(A)   \} $ for $ a \in A $.
\end{Definition}

\begin{Remark}\label{Comparison with HLW04 and MS17}
	 We notice that $ N(A) $ coincides with the \textit{normal bundle of $A$} introduced in \cite[\S 2.1]{MR2031455} and $ N(A) \subseteq \Nor(A) $, see \cite[4.4]{MR0110078} or \cite[3.1.21]{MR0257325}. If $ \reach A > 0 $ then $N(A,a) = \Nor(A,a) \cap \mathbf{S}^{n-1} $ for $ a \in A $ by \cite[4.8(12)]{MR0110078}.
\end{Remark}

\begin{Remark}\label{bi-lipschitz parametrization of the unit normal bundle}
		If $ 1 < \lambda < \infty $, $(a,u) \in A \times \mathbf{S}^{n-1} $ and $ \bm{\delta}_{A}(a+su) = s $ for some $ s > 0 $ it follows from \ref{n.p.p. along radial directions} that $ a + \lambda^{-1}su \in A_{\lambda} $ and $ \bm{\psi}_{A}(a+\lambda^{-1}su)=(a,u) $. Then we readily infer that
	\begin{equation*}
		N(A) = \bm{\psi}_{A}[A_{\lambda}] = \bigcup_{r >0}\bm{\psi}_{A}[S_{\lambda}(A,r)].
	\end{equation*}
It follows from \ref{remark on S lambda} that \textit{$ N(A) $ is a countably $ n-1 $ rectifiable Borel subset of $ \Real{n} \times \mathbf{S}^{n-1} $.} This fact has been already noticed in \cite[p.\ 243]{MR2031455}.
\end{Remark}

\begin{Definition}\label{regular points of N(A)}
	If $ x \in R(A) $ then \emph{$ \bm{\psi}_{A}(x) $ is a regular point of $ N(A) $.} We denote the set of all regular points of $N(A)$ by $R(N(A))$.
\end{Definition}

\begin{Remark}\label{remark on regular points}
	It follows from \ref{bi-lipschitz parametrization of the unit normal bundle}, \ref{remark on S lambda} and \ref{approximate diff of nearest point pr: remark} that
	\begin{equation*}
	\Haus{n-1}(N(A)\sim R(N(A)) =0.
	\end{equation*}
	
	Moreover it follows from \ref{approximate diff of nearest point pr} that if $ (a,u)\in R(N(A)) $ then $ a + ru \in R(A) $ for $ 0 < r < \sup\{ s : \bm{\delta}_{A}(a+su) =s  \} $.
\end{Remark}

The following lemma ensures that the definition in \ref{second fundamental form} is well posed.

\begin{Lemma}\label{preparation for definition s.f.f.}
	Suppose $ A \subseteq \Real{n} $ is a closed set, $x \in R(A)$, $ 0 < t < \rho(A,x) $ and $y = \bm{\xi}_{A}(x) + t(x - \bm{\xi}_{A}(x)) $, then the following two statements hold.
	
	\begin{enumerate}
		\item \label{preparation for definition s.f.f.: 1} If $ v,v_{1}, v_{2} \in \Real{n} $ are such that $ \ap \Der \bm{\xi}_{A}(x)(v_{1}) = \ap \Der \bm{\xi}_{A}(x)(v_{2}) $, then
		\begin{flalign*}
			& \ap \Der \bm{\xi}_{A}(x)(v) \bullet \ap \Der \bm{\nu}_{A}(x)(v_{1}) = \ap \Der \bm{\xi}_{A}(x)(v) \bullet \ap \Der \bm{\nu}_{A}(x)(v_{2}),\\
			& \ap \Der \bm{\xi}_{A}(x)(v_{1}) \bullet \ap \Der \bm{\nu}_{A}(x)(v) = \ap \Der \bm{\xi}_{A}(x)(v) \bullet \ap \Der \bm{\nu}_{A}(x)(v_{1}).
		\end{flalign*}
		\item \label{preparation for definition s.f.f.:2} If $ v,w,v_{1}, w_{1} \in \Real{n} $ are such that $ \ap \Der \bm{\xi}_{A}(y)(w) = \ap \Der \bm{\xi}_{A}(x)(v) $ and $ \ap \Der \bm{\xi}_{A}(y)(w_{1}) = \ap \Der \bm{\xi}_{A}(x)(v_{1})$, then 
		\begin{equation*}
			 \ap \Der \bm{\nu}_{A}(x)(v_{1}) \bullet \ap \Der \bm{\xi}_{A}(x)(v) = \ap \Der \bm{\nu}_{A}(y)(w_{1}) \bullet \ap \Der \bm{\xi}_{A}(y)(w).
		\end{equation*}
	\end{enumerate}
\end{Lemma}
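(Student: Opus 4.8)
The plan is to prove both parts by reducing everything to the symmetry of the approximate differentials and to the behaviour under the dilation maps $h_t$ established in Lemma~\ref{A lambda}. Throughout write $r = \bm{\delta}_A(x)$, $T = \Real{n} \cap \{v : v \bullet \bm{\nu}_A(x) = 0\}$, and recall from Lemma~\ref{range of the ap diff of the n.p.p.} that $\ap\Der\bm{\nu}_A(x) = r^{-1}(T_{\natural} - \ap\Der\bm{\xi}_A(x))$, that $\bm{\nu}_A(x)$ spans $\ker\ap\Der\bm{\psi}_A(x)$ (Lemma~\ref{A lambda}\eqref{differentiable extension of psi}), and that the images of both $\ap\Der\bm{\xi}_A(x)$ and $\ap\Der\bm{\nu}_A(x)$ lie in $T$ (Remark~\ref{range of the ap diff of the n.p.p. and dilations}). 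Crucially, since $x \in R(A)$, the endomorphism $\ap\Der\bm{\xi}_A(x)$ is symmetric, hence so is $\ap\Der\bm{\nu}_A(x)$; both therefore restrict to commuting self-adjoint endomorphisms of $T$, and $T$ splits into their common eigenspaces.

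For \eqref{preparation for definition s.f.f.: 1}: the hypothesis $\ap\Der\bm{\xi}_A(x)(v_1) = \ap\Der\bm{\xi}_A(x)(v_2)$ means $v_1 - v_2 \in \ker\ap\Der\bm{\xi}_A(x)$. Using $\ap\Der\bm{\nu}_A(x) = r^{-1}(T_{\natural} - \ap\Der\bm{\xi}_A(x))$, compute $\ap\Der\bm{\nu}_A(x)(v_1 - v_2) = r^{-1}T_{\natural}(v_1 - v_2)$, so $\ap\Der\bm{\xi}_A(x)(v) \bullet \ap\Der\bm{\nu}_A(x)(v_1 - v_2) = r^{-1}\,\ap\Der\bm{\xi}_A(x)(v) \bullet T_{\natural}(v_1 - v_2) = r^{-1}\,\ap\Der\bm{\xi}_A(x)(v) \bullet (v_1 - v_2)$, since $\im\ap\Der\bm{\xi}_A(x) \subseteq T$ so $T_{\natural}$ can be dropped against it. But $\ap\Der\bm{\xi}_A(x)(v) \bullet (v_1 - v_2) = v \bullet \ap\Der\bm{\xi}_A(x)(v_1 - v_2) = 0$ by symmetry of $\ap\Der\bm{\xi}_A(x)$ and the hypothesis. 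This gives the first displayed identity. The second follows from the same symmetry: $\ap\Der\bm{\xi}_A(x)(v_1) \bullet \ap\Der\bm{\nu}_A(x)(v) = \ap\Der\bm{\nu}_A(x)\,\ap\Der\bm{\xi}_A(x)(v_1) \bullet v$ (moving the self-adjoint $\ap\Der\bm{\nu}_A(x)$ across), and then $\ap\Der\bm{\nu}_A(x)\,\ap\Der\bm{\xi}_A(x) = \ap\Der\bm{\xi}_A(x)\,\ap\Der\bm{\nu}_A(x)$ since both equal $r^{-1}(\ap\Der\bm{\xi}_A(x) - \ap\Der\bm{\xi}_A(x)^2)$ (as $\ap\Der\bm{\xi}_A(x)T_{\natural} = \ap\Der\bm{\xi}_A(x)$, because $\ker\ap\Der\bm{\xi}_A(x) \supseteq T^\perp$), so we may move it back onto $v$ and regroup as $\ap\Der\bm{\xi}_A(x)(v) \bullet \ap\Der\bm{\nu}_A(x)(v_1)$.

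For \eqref{preparation for definition s.f.f.:2}: by Theorem~\ref{approximate diff of nearest point pr}, $y \in R(A)$, so the same structure is available at $y$. Choose $\lambda$ with $t < \lambda < \rho(A,x)$, $\lambda > 1$, so that $x \in D(A_\lambda)$ and $y = h_t(x) \in D(A_{\lambda/t})$ by Lemma~\ref{A lambda}\eqref{A lambda:4}. Lemma~\ref{A lambda}\eqref{n.p.p.and dilations} gives $\ap\Der\bm{\psi}_A(x) = \ap\Der\bm{\psi}_A(y) \circ \ap\Der h_t(x)$, and by differentiating $h_t(z) = \bm{\xi}_A(z) + t(z - \bm{\xi}_A(z))$ one has $\ap\Der h_t(x) = (1-t)\ap\Der\bm{\xi}_A(x) + t\,\mathrm{id}$, which commutes with $\ap\Der\bm{\xi}_A(x)$ and hence with $\ap\Der\bm{\nu}_A(x)$. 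Projecting the relation $\ap\Der\bm{\psi}_A(x) = \ap\Der\bm{\psi}_A(y)\circ\ap\Der h_t(x)$ onto the two factors yields $\ap\Der\bm{\xi}_A(x) = \ap\Der\bm{\xi}_A(y)\circ\ap\Der h_t(x)$ and $\ap\Der\bm{\nu}_A(x) = \ap\Der\bm{\nu}_A(y)\circ\ap\Der h_t(x)$. Now the hypotheses say $\ap\Der\bm{\xi}_A(y)(w) = \ap\Der\bm{\xi}_A(x)(v) = \ap\Der\bm{\xi}_A(y)(\ap\Der h_t(x)(v))$, i.e. $w - \ap\Der h_t(x)(v) \in \ker\ap\Der\bm{\xi}_A(y)$, and similarly for $w_1$. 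Then
\begin{align*}
\ap\Der\bm{\nu}_A(y)(w_1) \bullet \ap\Der\bm{\xi}_A(y)(w)
&= \ap\Der\bm{\nu}_A(y)(w_1) \bullet \ap\Der\bm{\xi}_A(y)(\ap\Der h_t(x)(v))\\
&= \ap\Der\bm{\nu}_A(y)(\ap\Der h_t(x)(v_1)) \bullet \ap\Der\bm{\xi}_A(x)(v)\\
&= \ap\Der\bm{\nu}_A(x)(v_1) \bullet \ap\Der\bm{\xi}_A(x)(v),
\end{align*}
where the first step uses $w = \ap\Der h_t(x)(v)$ modulo $\ker\ap\Der\bm{\xi}_A(y)$ (and $\im\ap\Der\bm{\nu}_A(y)$ pairs trivially against that kernel by the symmetry and image considerations at $y$, or alternatively one applies part \eqref{preparation for definition s.f.f.: 1} at $y$), the second uses part \eqref{preparation for definition s.f.f.: 1} at $y$ to replace $w_1$ by $\ap\Der h_t(x)(v_1)$ in the first slot together with $\ap\Der\bm{\xi}_A(y)(\ap\Der h_t(x)(v)) = \ap\Der\bm{\xi}_A(x)(v)$, and the third uses $\ap\Der\bm{\nu}_A(y)\circ\ap\Der h_t(x) = \ap\Der\bm{\nu}_A(x)$. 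The main obstacle is bookkeeping the kernel-ambiguities cleanly: one must verify at each replacement that the vector being swapped differs from the intended one by an element of $\ker\ap\Der\bm{\xi}_A(\cdot)$ and that the other factor in the pairing annihilates that kernel — which is precisely what part \eqref{preparation for definition s.f.f.: 1} is designed to supply, so the logical order is to prove \eqref{preparation for definition s.f.f.: 1} first and invoke it (at both $x$ and $y$) inside the proof of \eqref{preparation for definition s.f.f.:2}.
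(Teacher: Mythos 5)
Your proposal is correct and uses essentially the same ingredients as the paper's proof: part \eqref{preparation for definition s.f.f.: 1} rests on the identity $\ap \Der \bm{\nu}_{A}(x) = \bm{\delta}_{A}(x)^{-1}(T_{\natural} - \ap \Der \bm{\xi}_{A}(x))$ from \ref{range of the ap diff of the n.p.p.} together with the symmetry and kernel/image facts at regular points, and part \eqref{preparation for definition s.f.f.:2} on the dilation relation $\ap \Der \bm{\psi}_{A}(x) = \ap \Der \bm{\psi}_{A}(y) \circ \ap \Der h_{t}(x)$ from \ref{A lambda}\eqref{n.p.p.and dilations}. The only organizational difference is that you conclude \eqref{preparation for definition s.f.f.:2} by applying part \eqref{preparation for definition s.f.f.: 1} at $y$ (legitimate, since $y \in R(A)$ by \ref{approximate diff of nearest point pr}), whereas the paper redoes the corresponding algebraic computation at $y$ directly via \ref{A lambda}\eqref{symmetry}; this is a cosmetic reorganization, not a different method.
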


\begin{proof}
Let $ r = |x - \bm{\xi}_{A}(x)| $ and we recall that $ x \in D(A_{\lambda}) $ for $ 1 < \lambda < \rho(A,x) $. To prove \eqref{preparation for definition s.f.f.: 1} we compute, using \ref{range of the ap diff of the n.p.p.} and \ref{A lambda}\eqref{differentiable extension of psi},
	\begin{flalign*}
		&	\ap \Der \bm{\xi}_{A}(x)(v) \bullet \ap \Der \bm{\nu}_{A}(x)(v_{1}) \\
		& \quad = r^{-1} v \bullet [ \ap \Der \bm{\xi}_{A}(x)(v_{1}) - ( \ap \Der \bm{\xi}_{A}(x)\circ \ap \Der \bm{\xi}_{A}(x) )(v_{1})  ] \\
		& \quad = r^{-1} v \bullet [ \ap \Der \bm{\xi}_{A}(x)(v_{2}) - ( \ap \Der \bm{\xi}_{A}(x)\circ \ap \Der \bm{\xi}_{A}(x) )(v_{2})  ] \\
		&\quad = \ap \Der \bm{\xi}_{A}(x)(v) \bullet \ap \Der \bm{\nu}_{A}(x)(v_{2}),\\
		& \ap \Der \bm{\xi}_{A}(x)(v) \bullet \ap \Der \bm{\nu}_{A}(x)(v_{1}) \\
		& \quad = r^{-1} v \bullet [ \ap \Der \bm{\xi}_{A}(x)(v_{1}) - ( \ap \Der \bm{\xi}_{A}(x)\circ \ap \Der \bm{\xi}_{A}(x))(v_{1})  ] \\
		& \quad = r^{-1} \ap \Der \bm{\xi}_{A}(x)(v_{1}) \bullet [ v - \ap \Der \bm{\xi}_{A}(x)(v)    ] \\
		& \quad = \ap \Der \bm{\xi}_{A}(x)(v_{1}) \bullet \ap \Der \bm{\nu}_{A}(x)(v);
	\end{flalign*}
	to prove \eqref{preparation for definition s.f.f.:2} we compute, using \ref{range of the ap diff of the n.p.p.} and \ref{A lambda}\eqref{differentiable extension of psi}\eqref{n.p.p.and dilations}\eqref{symmetry},
	\begin{flalign*}
		& \ap \Der \bm{\xi}_{A}(y)(w_{1}) = \ap \Der \bm{\xi}_{A}(x)(v_{1}) = \ap \Der \bm{\xi}_{A}(x)(T_{\natural}(v_{1})) \\
		& \qquad = \ap \Der \bm{\xi}_{A}(y)[  \ap \Der \bm{\xi}_{A}(x)(v_{1}) + t( T_{\natural}(v_{1}) - \ap \Der \bm{\xi}_{A}(x)(v_{1}) )    ] \\
		& \qquad = \ap \Der \bm{\xi}_{A}(y)[  \ap \Der \bm{\xi}_{A}(y)(w_{1}) + tr \ap \Der \bm{\nu}_{A}(x)(v_{1})    ], \\
		& t^{-1}r^{-1}[ \ap \Der \bm{\xi}_{A}(y)(w_{1}) - (  \ap \Der \bm{\xi}_{A}(y) \circ \ap \Der \bm{\xi}_{A}(y)   )(w_{1})    ] \\
		& \qquad = (\ap \Der \bm{\xi}_{A}(y) \circ \ap \Der \bm{\nu}_{A}(x))(v_{1}),\\
		& \ap \Der \bm{\nu}_{A}(x)(v_{1}) \bullet \ap \Der \bm{\xi}_{A}(x)(v) \\
		& \quad = \ap \Der \bm{\nu}_{A}(x)(v_{1}) \bullet \ap \Der \bm{\xi}_{A}(y)(w) \\
		& \quad = ( \ap \Der \bm{\xi}_{A}(y) \circ \ap \Der \bm{\nu}_{A}(x)    )(v_{1}) \bullet w \\
		& \quad = t^{-1}r^{-1}[ \ap \Der \bm{\xi}_{A}(y)(w_{1}) - (  \ap \Der \bm{\xi}_{A}(y) \circ \ap \Der \bm{\xi}_{A}(y)   )(w_{1}) ] \bullet w \\
		& \quad = \ap \Der \bm{\nu}_{A}(y) (w_{1}) \bullet \ap \Der \bm{\xi}_{A}(y)(w).
	\end{flalign*}
\end{proof}

	\begin{Definition}\label{second fundamental form}
		Suppose $A$ is a closed subset of $ \Real{n} $ and $ (a,u) \in R(N(A)) $. 
		
		We define
			\begin{equation*}
			T_{A}(a,u) = \im \ap \Der \bm{\xi}_{A}(x) \quad \textrm{and} \quad Q_{A}(a,u)(\tau,\tau_{1}) = \tau \bullet \ap \Der \bm{\nu}_{A}(x)(v_{1}),
			\end{equation*}
			whenever $ x $ is a regular point of $ \bm{\xi}_{A}$ such that $ \bm{\psi}_{A}(x)= (a,u) $, $ \tau \in T_{A}(a,u) $, $  \tau_{1} \in T_{A}(a,u) $ and $ v_{1} \in \Real{n} $ such that $ \ap \Der \bm{\xi}_{A}(x)(v_{1}) = \tau_{1}$.
			
			We call $ Q_{A}(a,u) $ \textit{second fundamental form of $ A $ at $ a $ in the direction $ u $.}
	\end{Definition}

\begin{Lemma}\label{perpendicularity of the normal bundle and estimate s.f.f.}
If $ A \subseteq \Real{n} $ is a closed set and $(a,u)\in R(N(A))$ then
\begin{equation*}
Q_{A}(a,u) : T_{A}(a,u) \times T_{A}(a,u)  \rightarrow \Real{}
\end{equation*}
is a symmetric bilinear form and $T_{A}(a,u) \subseteq \{v : v \bullet u =0\} $. Moreover if $ r > 0 $ and \mbox{$ \bm{\delta}_{A}(a+ru)=r$,} then 
\begin{equation*}
\quad Q_{A}(a,u)(\tau,\tau)\geq - r^{-1}|\tau|^{2} \quad \textrm{whenever $ \tau \in T_{A}(a,u) $}.
\end{equation*}	
\end{Lemma}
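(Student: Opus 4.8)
The plan is to fix a regular point $ x \in R(A) $ with $ \bm{\psi}_{A}(x) = (a,u) $ and work entirely with the approximate differentials $ \ap\Der\bm{\xi}_{A}(x) $ and $ \ap\Der\bm{\nu}_{A}(x) $, which exist and are symmetric by definition of $ R(A) $, and which satisfy $ \ap\Der\bm{\nu}_{A}(x) = r^{-1}(T_\natural - \ap\Der\bm{\xi}_{A}(x)) $ and $ \ap\Der\bm{\xi}_{A}(x)\bullet\bm{\nu}_{A}(x) = 0 $ by \ref{range of the ap diff of the n.p.p.}, where $ r = \bm{\delta}_{A}(x) = |x-\bm{\xi}_{A}(x)| $ and $ T = \{v : v\bullet\bm{\nu}_{A}(x)=0\} $. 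The well-posedness of $ T_A(a,u) $ and $ Q_A(a,u) $ as functions of $(a,u)$ alone (independent of the choice of $x$ on the ray) is exactly \ref{preparation for definition s.f.f.}, so I may treat $ x $ as fixed.

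First I would prove bilinearity: $ Q_A(a,u)(\tau,\tau_1) = \tau\bullet\ap\Der\bm{\nu}_{A}(x)(v_1) $ where $ \ap\Der\bm{\xi}_{A}(x)(v_1)=\tau_1 $; linearity in $\tau$ is immediate, and linearity in $\tau_1$ follows from \ref{preparation for definition s.f.f.}\eqref{preparation for definition s.f.f.: 1} (first identity) which guarantees the value does not depend on the choice of $v_1$ with $\ap\Der\bm{\xi}_{A}(x)(v_1)=\tau_1$, combined with linearity of $\ap\Der\bm{\nu}_{A}(x)$. For symmetry, take $ \tau=\ap\Der\bm{\xi}_{A}(x)(v) $ and $ \tau_1=\ap\Der\bm{\xi}_{A}(x)(v_1) $; then
$ Q_A(a,u)(\tau,\tau_1) = \ap\Der\bm{\xi}_{A}(x)(v)\bullet\ap\Der\bm{\nu}_{A}(x)(v_1) $, and the second identity of \ref{preparation for definition s.f.f.}\eqref{preparation for definition s.f.f.: 1} rewrites this as $ \ap\Der\bm{\xi}_{A}(x)(v_1)\bullet\ap\Der\bm{\nu}_{A}(x)(v) = Q_A(a,u)(\tau_1,\tau) $. (Alternatively, symmetry is transparent once one substitutes $ \ap\Der\bm{\nu}_{A}(x)=r^{-1}(T_\natural-\ap\Der\bm{\xi}_{A}(x)) $: on $ T_A(a,u)\subseteq T $ one gets $ Q_A(a,u)(\tau,\tau_1) = r^{-1}(\tau\bullet\tau_1 - \tau\bullet\ap\Der\bm{\xi}_{A}(x)(v_1)) $ and $ \tau\bullet\ap\Der\bm{\xi}_{A}(x)(v_1) = \ap\Der\bm{\xi}_{A}(x)(\tau)\bullet v_1 $ is symmetric in the two arguments using that $\ap\Der\bm{\xi}_{A}(x)$ is a symmetric endomorphism fixing $\tau$ up to the pattern above — but I would phrase this carefully via \ref{preparation for definition s.f.f.} to avoid subtleties about whether $\ap\Der\bm{\xi}_{A}(x)$ restricted to its image behaves well.)

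For $ T_A(a,u)\subseteq\{v:v\bullet u=0\} $: by \ref{range of the ap diff of the n.p.p.} we have $ \ap\Der\bm{\xi}_{A}(x)\bullet\bm{\nu}_{A}(x)=0 $, i.e.\ every vector in $ \im\ap\Der\bm{\xi}_{A}(x)=T_A(a,u) $ is orthogonal to $ \bm{\nu}_{A}(x) $; but $ \bm{\psi}_{A}(x)=(a,u) $ forces $ \bm{\nu}_{A}(x)=u $, giving the claim. Finally, for the lower curvature bound, suppose $ r>0 $ with $ \bm{\delta}_{A}(a+ru)=r $, so $ \rho(A,x)\geq r/\bm{\delta}_{A}(x) $ and we may use \ref{preparation for definition s.f.f.}\eqref{preparation for definition s.f.f.:2} to transport the computation to the point $ y = a + ru $ on the same ray; at $ y $ we have $ \bm{\delta}_{A}(y)=r $ and $ y\in R(A) $ by \ref{approximate diff of nearest point pr}, with $ Q_A(a,u)(\tau,\tau) = \tau\bullet\ap\Der\bm{\nu}_{A}(y)(v_1) $ where $ \ap\Der\bm{\xi}_{A}(y)(v_1)=\tau $. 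Writing $ \ap\Der\bm{\nu}_{A}(y)=r^{-1}(T_\natural-\ap\Der\bm{\xi}_{A}(y)) $ and taking $ v_1=\tau $ (legitimate up to the ambiguity handled by \ref{preparation for definition s.f.f.}\eqref{preparation for definition s.f.f.: 1}) yields
$ Q_A(a,u)(\tau,\tau) = r^{-1}(|\tau|^2 - \tau\bullet\ap\Der\bm{\xi}_{A}(y)(\tau)) \geq -r^{-1}|\tau|^2 $,
since by \ref{A lambda}\eqref{symmetry} the eigenvalues of $ \ap\Der\bm{\xi}_{A}(y) $ are nonnegative, so $ \tau\bullet\ap\Der\bm{\xi}_{A}(y)(\tau)\geq 0 $ (here symmetry of $\ap\Der\bm{\xi}_{A}(y)$ is used, which holds since $y\in R(A)$). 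The main obstacle I anticipate is purely bookkeeping: throughout, $ v_1 $ is only defined up to $ \ker\ap\Der\bm{\xi}_{A}(x) $ (which is $\Real{}\bm{\nu}_{A}(x)$ by \ref{A lambda}\eqref{differentiable extension of psi}), so each assertion that a quantity equals $\tau\bullet\ap\Der\bm{\nu}_{A}(x)(v_1)$ must invoke the independence guaranteed by \ref{preparation for definition s.f.f.}\eqref{preparation for definition s.f.f.: 1}; once that is consistently cited, no real difficulty remains.
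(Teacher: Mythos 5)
Your treatment of bilinearity, symmetry and the inclusion $T_{A}(a,u)\subseteq\{v: v\bullet u=0\}$ follows the paper's route (via \ref{preparation for definition s.f.f.} and \ref{range of the ap diff of the n.p.p. and dilations}, resp.\ \ref{range of the ap diff of the n.p.p.}) and is fine. The curvature bound, however, has a genuine gap: you perform the computation at $y=a+ru$ and assert $y\in R(A)$ by \ref{approximate diff of nearest point pr}, but that theorem (and likewise \ref{preparation for definition s.f.f.}\eqref{preparation for definition s.f.f.:2}) only applies for $0<t<\rho(A,x)$ \emph{strictly}, while the hypothesis $\bm{\delta}_{A}(a+ru)=r$ allows $r=\sup\{s:\bm{\delta}_{A}(a+su)=s\}$. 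In that boundary case $\rho(A,a+ru)=1$, so $a+ru$ is not a regular point, and it may not even belong to $\dmn\bm{\xi}_{A}$ (take $A=\mathbf{S}^{n-1}$, $u=-a$, $r=1$: then $a+ru$ is the center of the sphere). The paper avoids this by proving the estimate at $a+su$ for $0<s<r$, where $a+su\in R(A)$ by \ref{remark on regular points}, and then letting $s\to r$; your argument needs this limiting step.

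There is also a flaw in the final computation even at an interior point of the ray. The substitution $v_{1}=\tau$ is not legitimate: $\tau\in\im\ap\Der\bm{\xi}_{A}(y)$ does not imply $\ap\Der\bm{\xi}_{A}(y)(\tau)=\tau$, and \ref{preparation for definition s.f.f.}\eqref{preparation for definition s.f.f.: 1} only allows exchanging two preimages of the \emph{same} vector; hence your formula $Q_{A}(a,u)(\tau,\tau)=r^{-1}(|\tau|^{2}-\tau\bullet\ap\Der\bm{\xi}_{A}(y)(\tau))$ is unjustified. Moreover, even granting it, the fact $\tau\bullet\ap\Der\bm{\xi}_{A}(y)(\tau)\geq 0$ would give the upper bound $Q_{A}(a,u)(\tau,\tau)\leq r^{-1}|\tau|^{2}$, not the claimed lower bound (for the lower bound you would need $\tau\bullet\ap\Der\bm{\xi}_{A}(y)(\tau)\leq 2|\tau|^{2}$, which is not available since the eigenvalues of $\ap\Der\bm{\xi}_{A}$ are only bounded by $\lambda(\lambda-1)^{-1}$). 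The correct computation, as in the paper, keeps an arbitrary $v$ with $\ap\Der\bm{\xi}_{A}(a+su)(v)=\tau$ and uses \ref{range of the ap diff of the n.p.p.} together with $\tau\bullet u=0$ to get $Q_{A}(a,u)(\tau,\tau)=s^{-1}(\tau\bullet v-|\tau|^{2})\geq -s^{-1}|\tau|^{2}$, since $\tau\bullet v=\ap\Der\bm{\xi}_{A}(a+su)(v)\bullet v\geq 0$ by the symmetry and nonnegativity of the eigenvalues in \ref{A lambda}\eqref{symmetry}; then let $s\to r$.
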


\begin{proof}
If $x$ and $y$ are regular points of $\bm{\xi}_{A} $ such that $\bm{\psi}_{A}(x)= (a,u) = \bm{\psi}_{A}(y) $ then $	y =\bm{\xi}_{A}(x) + (\bm{\delta}_{A}(y)/\bm{\delta}_{A}(x))(x - \bm{\xi}_{A}(x)) $, and the first part of the conclusion follows from \ref{range of the ap diff of the n.p.p. and dilations} and \ref{preparation for definition s.f.f.}. 

If $ 0 < s < r $ then $ a + su $ is a regular point of $\bm{\xi}_{A}$ by \ref{remark on regular points} and $\bm{\psi}_{A}(a+su) =(a,u) $. If $\tau \in T_{A}(a,u)$ and $ v \in \Real{n}$ is such that $\ap \Der \bm{\xi}_{A}(a+su)(v)=\tau$ then, noting that $ \ap \Der \bm{\xi}_{A}(a+su)(v) \bullet v \geq 0 $ by \ref{A lambda}\eqref{symmetry}, we use \ref{range of the ap diff of the n.p.p.} to compute
	\begin{flalign*}
		 Q_{A}(a,u)(\tau,\tau) & =	\ap \Der \bm{\xi}_{A}(a+su)(v) \bullet \ap \Der \bm{\nu}_{A}(a+su)(v)\\
		&  = s^{-1} \ap \Der \bm{\xi}_{A}(a+su)(v) \bullet (T_{\natural}(v) -\ap \Der \bm{\xi}_{A}(a+su)(v)  ) \\
		& = s^{-1} \ap \Der \bm{\xi}_{A}(a+su)(v) \bullet (v -\ap \Der \bm{\xi}_{A}(a+su)(v)  ) \\
		&  \geq -s^{-1}| \ap \Der \bm{\xi}_{A}(a+su)(v)|^{2} = -s^{-1}|\tau|^{2}.
	\end{flalign*}
Letting $ s \to r $ we get the second conclusion.
\end{proof}

\begin{Definition}\label{principal curvatures}
Let $ A \subseteq \Real{n} $ be closed. For each regular point $(a,u)$ of $N(A)$ we define \emph{the principal curvatures of $ A $ at $(a,u) $}, \begin{equation*}
\kappa_{A,1}(a,u) \leq \ldots \leq \kappa_{A,n-1}(a,u),
\end{equation*}
so that $\kappa_{A,m +1}(a,u) = \infty $, $\kappa_{A,1}(a,u), \ldots , \kappa_{A,m}(a,u)$ are the eigenvalues of $ Q_{A}(a,u)$ and $ m = \dim T_{A}(a,u)$. Moreover
\begin{equation*}
	\chi_{A,1}(x) \leq \ldots \leq \chi_{A,n-1}(x)
\end{equation*}
are the eigenvalues of $ \ap \Der \bm{\nu}_{A}(x)| \{v : v \bullet \bm{\nu}_{A}(x)=0  \} $ for $ x \in R(A) $.
\end{Definition}

Now we clarify the relation between the $ \kappa_{A,i} $'s and the $ \chi_{A,i} $'s.

\begin{Lemma}\label{representation of principal curvatures}
If $ A \subseteq \Real{n} $ is closed and $(a,u)\in R(N(A)) $ then
\begin{equation*}
	\kappa_{A,i}(a,u)= \frac{\chi_{A,i}(a+ru)}{1-r\chi_{A,i}(a+ru)} 
\end{equation*}
for $ 0 < r < \sup\{ s : \bm{\delta}_{A}(a+su) =s   \} $ and $ i = 1, \ldots , n-1 $.
\end{Lemma}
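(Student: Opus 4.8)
The plan is to fix a regular point $(a,u)\in R(N(A))$ and exploit the dilation structure established in \ref{A lambda}\eqref{n.p.p.and dilations} together with the explicit formula for $\ap\Der\bm{\nu}_{A}$ from \ref{range of the ap diff of the n.p.p.}. First I would choose $\lambda$ with $1<\lambda<\rho(A,a+ru)$ (possible for each $r$ in the stated range since $\rho(A,a+su)=s^{-1}\sup\{t:\bm{\delta}_{A}(a+tu)=t\}$), so that $x:=a+ru\in D(A_{\lambda})$ is a regular point of $\bm{\xi}_{A}$ with $\bm{\psi}_{A}(x)=(a,u)$, and abbreviate $T=T_{\natural}$ for $T=\{v:v\bullet u=0\}$. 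By \ref{range of the ap diff of the n.p.p. and dilations}, the endomorphism $\ap\Der\bm{\nu}_{A}(x)$ has image contained in $T$, is symmetric (since $x\in R(A)$), and vanishes on $\bm{\nu}_{A}(x)=u$; hence its restriction to $T$ diagonalizes with an orthonormal eigenbasis $v_{1},\dots,v_{n-1}$ of $T$ and eigenvalues $\chi_{A,i}(x)$. By \ref{range of the ap diff of the n.p.p.}, $\ap\Der\bm{\xi}_{A}(x)=T_{\natural}-r\,\ap\Der\bm{\nu}_{A}(x)$, so the same $v_{i}$ are eigenvectors of $\ap\Der\bm{\xi}_{A}(x)$ with eigenvalues $1-r\chi_{A,i}(x)$; note these are positive by \ref{A lambda}\eqref{symmetry} (as $0<r<\lambda'\bm{\delta}_{A}(x)$ would force... more directly, the eigenvalue bound there gives $1-r\chi>0$), so $\ap\Der\bm{\xi}_{A}(x)$ restricted to $T$ is invertible and $T_{A}(a,u)=\im\ap\Der\bm{\xi}_{A}(x)=T$ has dimension $n-1$.

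Wait — the general case allows $\dim T_{A}(a,u)<n-1$, in which case some $\chi_{A,i}(x)=\bm{\delta}_{A}(x)^{-1}=r^{-1}$ by \ref{A lambda}\eqref{symmetry}. So the cleaner approach is: let $m=\dim T_{A}(a,u)$ and split the eigenbasis $v_{1},\dots,v_{n-1}$ of $\ap\Der\bm{\nu}_{A}(x)|T$ into those $v_{i}$ with $\ap\Der\bm{\xi}_{A}(x)(v_{i})\ne 0$ and those with $\ap\Der\bm{\xi}_{A}(x)(v_{i})=0$. The latter, by $\ap\Der\bm{\xi}_{A}(x)=T_{\natural}-r\ap\Der\bm{\nu}_{A}(x)$, are precisely the $v_{i}$ with $\chi_{A,i}(x)=r^{-1}$; since $\im\ap\Der\bm{\xi}_{A}(x)$ has dimension $m$, exactly $m$ of the $\chi_{A,i}(x)$ differ from $r^{-1}$ and $n-1-m$ of them equal $r^{-1}$. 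For the $m$ ``finite'' indices, $v_{i}':=\ap\Der\bm{\xi}_{A}(x)(v_{i})=(1-r\chi_{A,i}(x))v_{i}$ spans $T_{A}(a,u)$, and $v_{i}$ itself serves as the preimage vector in the definition \ref{second fundamental form}:
\begin{equation*}
Q_{A}(a,u)(v_{i}',v_{i}')=v_{i}'\bullet\ap\Der\bm{\nu}_{A}(x)(v_{i})=(1-r\chi_{A,i}(x))\,v_{i}\bullet\chi_{A,i}(x)v_{i}=(1-r\chi_{A,i}(x))\chi_{A,i}(x).
\end{equation*}
Since $|v_{i}'|^{2}=(1-r\chi_{A,i}(x))^{2}$, the eigenvalue of $Q_{A}(a,u)$ in the direction $v_{i}'$ is $\chi_{A,i}(x)/(1-r\chi_{A,i}(x))$, and as $v_{1}',\dots$ are mutually orthogonal (the $v_{i}$ being orthonormal) these are exactly the $m$ eigenvalues of $Q_{A}(a,u)$. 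Matching the ordering: $t\mapsto t/(1-rt)$ is increasing on $\{t<r^{-1}\}$, so the ordered finite eigenvalues $\kappa_{A,1}(a,u)\le\dots\le\kappa_{A,m}(a,u)$ correspond to the $m$ smallest (equivalently, the $<r^{-1}$) values among $\chi_{A,1}(x)\le\dots\le\chi_{A,n-1}(x)$, i.e.\ to $\chi_{A,1}(x),\dots,\chi_{A,m}(x)$, giving $\kappa_{A,i}(a,u)=\chi_{A,i}(x)/(1-r\chi_{A,i}(x))$ for $i=1,\dots,m$. For $i=m+1,\dots,n-1$ we have $\chi_{A,i}(x)=r^{-1}$, so the right-hand side $\chi_{A,i}/(1-r\chi_{A,i})=\infty=\kappa_{A,i}(a,u)$, completing the identity for all $i$.

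Finally I would check independence of $r$: for two values $0<r_{1},r_{2}$ in the range, the regular points $a+r_{1}u$ and $a+r_{2}u$ are related by $h_{r_{2}/r_{1}}$, and \ref{A lambda}\eqref{n.p.p.and dilations} together with the eigenvalue transformation $\mu\mapsto\mu((1-t)\mu+t)^{-1}$ recorded in the proof of \eqref{symmetry} shows $\chi_{A,i}(a+r_{2}u)=\chi_{A,i}(a+r_{1}u)/(1-(r_{2}/r_{1}-1)r_{1}\chi_{A,i}(a+r_{1}u))$ after adjusting for the $\bm{\delta}_{A}$-scaling — equivalently, $\chi_{A,i}(a+ru)=\chi_{A,i}(x)/(\dots)$ so that $\chi_{A,i}(a+ru)/(1-r\chi_{A,i}(a+ru))$ is constant in $r$; alternatively one simply observes the left-hand side $\kappa_{A,i}(a,u)$ manifestly does not depend on $r$, so the displayed formula forces the constancy. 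The main obstacle is bookkeeping the degenerate directions (those with $\chi_{A,i}=r^{-1}$, i.e.\ where $T_{A}(a,u)$ fails to be full-dimensional): one must verify that the count of such directions is independent of $r$ (it equals $n-1-\dim T_{A}(a,u)$ by \ref{range of the ap diff of the n.p.p. and dilations}, which gives $\im\ap\Der\bm{\xi}_{A}(h_{t}(x))=\im\ap\Der\bm{\xi}_{A}(x)$) and that the eigenvalue-matching respects the monotone increasing order across the finite/infinite split; once the monotonicity of $t\mapsto t/(1-rt)$ on $(-\infty,r^{-1})$ is in hand this is routine but needs to be stated carefully.
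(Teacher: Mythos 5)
Your argument is correct and follows essentially the same route as the paper's proof: diagonalize the symmetric map $\ap\Der\bm{\nu}_{A}(a+ru)$ on $\{v: v\bullet u=0\}$, transfer the eigenbasis to $\ap\Der\bm{\xi}_{A}(a+ru)$ via \ref{range of the ap diff of the n.p.p.}, and read off $Q_{A}(a,u)$ and the degenerate directions from Definitions \ref{second fundamental form} and \ref{principal curvatures}. Your extra bookkeeping (the bound $\chi_{A,i}\le r^{-1}$ from \ref{A lambda}\eqref{symmetry} plus monotonicity of $t\mapsto t/(1-rt)$ to match orderings across the finite/infinite split) only makes explicit what the paper leaves implicit, and the closing discussion of $r$-independence is unnecessary since the identity is proved for each admissible $r$ separately.
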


\begin{proof}
	If $ (a,u) \in R(N(A)) $ and $ 0 < r < \sup\{ s : \bm{\delta}_{A}(a+su) =s   \} $ let 
	\begin{equation*}
		T = \{ v : v \bullet \bm{\nu}_{A}(a+ru) =0 \}
	\end{equation*}
and let $\{ v_{1}, \ldots , v_{n-1} \} $ be an orthonormal basis of $ T $ such that
\begin{equation*}
\ap \Der \bm{\nu}_{A}(a+ru)(v_{i})= \chi_{A,i}(a+ru)v_{i} \quad \textrm{for $ i = 1, \ldots , n-1 $.}
\end{equation*}
 It follows from \ref{range of the ap diff of the n.p.p.} that
\begin{equation*}
	\ap \Der \bm{\xi}_{A}(a+ru)(v_{i}) = (1-r\chi_{A,i}(a+ru))v_{i} \quad \textrm{for $ i = 1, \ldots , n-1 $,}
\end{equation*}
whence we conclude from the definitions \ref{second fundamental form} and \ref{principal curvatures} that
\begin{equation*}
\chi_{A,i}(a+ru) =r^{-1} \quad\textrm{for $ i > \dim T_{A}(a,u) $, }
\end{equation*}
\begin{equation*}
Q_{A}(a,u)(v_{i},v_{j}) = \chi_{A,j}(a+ru)(1-r\chi_{A,j}(a+ru))^{-1}v_{i}\bullet v_{j} \quad \textrm{for $ i,j \leq \dim T_{A}(a,u)$,}
\end{equation*}
\begin{equation*}
\kappa_{A,i}(a,u) = \chi_{A,i}(a+ru)(1-r\chi_{A,i}(a+ru))^{-1} \quad \textrm{for $ 1 \leq i \leq n-1$.}
\end{equation*}
\end{proof}

It is immediate from the following lemma to conclude that the principal curvatures introduced in \cite{MR2031455} coincides with those introduced in \ref{principal curvatures}, see \ref{comparison with Hug et all.}.

\begin{Lemma}\label{basic facts about s.f.f.}
		Suppose $A\subseteq \Real{n}$ is closed and $\theta$ is $ \Haus{n-1}\restrict N(A)$ measurable and $ \Haus{n-1}\restrict N(A)$ almost positive function such that $ \theta \Haus{n-1} \restrict N(A) $ is a Radon measure over $ \Real{n} \times \mathbf{S}^{n-1} $. Let $ \psi = \theta \Haus{n-1} \restrict N(A) $.
	
	Then the following three statements hold.
		\begin{enumerate}
		\item \label{basic facts about s.f.f. 1} For $\Haus{n-1}$ a.e.\ $(a,u)\in N(A)$, $\Tan^{n-1}(\psi, (a,u))$ is a $ (n-1) $ dimensional plane contained in $\Tan^{n-1}(\Haus{n-1}\restrict N(A), (a,u))$ and there exist $u_{1}, \ldots , u_{n-1} \in \Real{n} $ such that $\{u_{1}, \ldots , u_{n-1}, u\} $ is an orthonormal basis of $\Real{n}$ and 
		\begin{equation*}
		\bigg\{	\bigg(\frac{1}{(1+\kappa_{A,i}(a,u)^{2})^{1/2}} u_{i}, \frac{\kappa_{A,i}(a,u)}{(1+\kappa_{A,i}(a,u)^{2})^{1/2}} u_{i} \bigg): \; 1 \leq i \leq n-1 \bigg\}
		\end{equation*}
	is an orthonormal basis of $\Tan^{n-1}(\psi,(a,u))$\footnote{If $\kappa_{A,i}(a,u) = \infty $ the corresponding vector equals $(0,u_{i})$.}.
		\item \label{basic facts about s.f.f. 2}  For $\Haus{n-1}$ a.e.\ $(a,u)\in N(A)$,	
		\begin{equation*}
		T_{A}(a,u) = \mathbf{p}[\Tan^{n-1}(\psi, (a,u))] \quad \textrm{and} \quad Q_{A}(a,u)(\tau,\tau_{1}) = \tau \bullet \sigma_{1}
		\end{equation*}
		whenever $\tau \in T_{A}(a,u)$, $\tau_{1} \in T_{A}(a,u) $ and $(\tau_{1}, \sigma_{1}) \in \Tan^{n-1}(\psi, (a,u))$.
		\item \label{basic facts about s.f.f. 3} For every $ (\Haus{n-1}\restrict N(A)) $ integrable $ \overline{\Real{}} $ valued function $ f $ on $ N(A) $,
		\begin{flalign*}
		& \int_{N(A)} f(a,u)\, \prod_{i=1}^{n-1}\frac{ | \kappa_{A,i}(a,u) |   }{(1+\kappa_{A,i}(a,u)^{2})^{1/2}} \, d\Haus{n-1}(a,u) \\
		& \quad = \int_{\mathbf{S}^{n-1}}\int_{\{a : (a,v) \in N(A)\} \times \{v\}}f \, d\Haus{0}\, d\Haus{n-1}v.
		\end{flalign*}
		\end{enumerate}
\end{Lemma}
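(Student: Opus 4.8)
The plan is to exploit the bi-Lipschitzian parametrization $\bm{\psi}_{A}$ of $N(A)$ by the level sets of $\bm{\delta}_{A}$ together with the first-order analysis of section~\ref{section: approx. diff of n.p.p.}, so as to reduce every assertion to an explicit eigenvalue computation at a single point $x = a+ru$ lying over $(a,u)$. Concretely, combining \ref{fine properties of level sets}, \ref{remark on regular points}, \ref{approximate diff of nearest point pr: remark}, \ref{remark on S lambda}, \ref{bi-lipschitz parametrization of the unit normal bundle} and the Coarea formula applied to $\bm{\delta}_{A}$, I would first produce a Borel set $R \subseteq R(N(A))$ with $\Haus{n-1}(N(A) \sim R) = 0$ such that each $(a,u) \in R$ admits $\lambda > 1$ and $r > 0$ for which $x := a+ru$ is a regular point of $\bm{\xi}_{A}$ in $S_{\lambda}(A,r)$ at which the conclusions of \ref{fine properties of level sets} hold relative to $S(A,r)$, at which $\Tan^{n-1}(\Haus{n-1} \restrict N(A),(a,u))$ is an $n-1$ dimensional plane and $\Haus{n-1}\restrict S_{\lambda}(A,r)$ has approximate tangent plane $T := \Real{n} \cap \{ v : v \bullet u = 0\}$, and at which $\theta$ is approximately continuous with value in $(0,\infty)$ relative to $\Haus{n-1}\restrict N(A)$. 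Since $\bm{\psi}_{A}|S_{\lambda}(A,r)$ is a bi-Lipschitzian homeomorphism (\ref{remark on S lambda}) whose $(\Haus{n-1}\restrict S(A,r), n-1)$ approximate differential at $x$ is $\ap \Der \bm{\psi}_{A}(x)$ (\ref{A lambda}\eqref{differentiable extension of psi}, \ref{fine properties of level sets}), the transformation rule for approximate tangent planes under bi-Lipschitzian maps, \cite[3.2.16, 3.2.19]{MR0257325}, yields
\[
\Tan^{n-1}(\Haus{n-1}\restrict N(A),(a,u)) = \ap\Der\bm{\psi}_{A}(x)[T],
\]
and the approximate continuity of $\theta$ upgrades this to $\Tan^{n-1}(\psi,(a,u)) = \ap\Der\bm{\psi}_{A}(x)[T]$.

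To unwind $\ap\Der\bm{\psi}_{A}(x)[T]$, I would use that $\ap\Der\bm{\xi}_{A}(x)$ is symmetric with image in $T$ and $\ap\Der\bm{\xi}_{A}(x)(\bm{\nu}_{A}(x)) = 0$: choose an orthonormal basis $u_{1},\dots,u_{n-1}$ of $T$ with $\ap\Der\bm{\xi}_{A}(x)(u_{i}) = \mu_{i}u_{i}$, $\mu_{i}\geq 0$. By \ref{range of the ap diff of the n.p.p.}, $\ap\Der\bm{\nu}_{A}(x)(u_{i}) = r^{-1}(1-\mu_{i})u_{i} = \chi_{A,i}(x)u_{i}$, so $\mu_{i} = 1 - r\chi_{A,i}(x)$ and, by \ref{representation of principal curvatures}, $\kappa_{A,i}(a,u) = r^{-1}(1-\mu_{i})\mu_{i}^{-1}$ if $\mu_{i}\neq 0$ and $\kappa_{A,i}(a,u) = \infty$ if $\mu_{i} = 0$. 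Hence $\ap\Der\bm{\psi}_{A}(x)(u_{i}) = (\mu_{i}u_{i},\, r^{-1}(1-\mu_{i})u_{i})$ is a positive multiple of $(u_{i},\kappa_{A,i}(a,u)u_{i})$ when $\mu_{i}\neq 0$ and of $(0,u_{i})$ when $\mu_{i}=0$; as the $u_{i}$ are orthonormal, these $n-1$ vectors are mutually orthogonal and nonzero, so normalising them produces exactly the orthonormal basis displayed in (1), which proves (1).

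Assertion (2) then follows by inspection: $\mathbf{p}[\Tan^{n-1}(\psi,(a,u))]$ is the linear span of $\{u_{i} : \mu_{i}\neq 0\} = \im\ap\Der\bm{\xi}_{A}(x) = T_{A}(a,u)$, and if $\tau_{1} = \sum_{\mu_{i}\neq 0}b_{i}u_{i}\in T_{A}(a,u)$ and $(\tau_{1},\sigma_{1})\in\Tan^{n-1}(\psi,(a,u))$ then, by (1), $\sigma_{1} - \sum_{\mu_{i}\neq 0}b_{i}\kappa_{A,i}(a,u)u_{i}$ lies in the span of $\{u_{j} : \mu_{j}=0\}$, so $\tau\bullet\sigma_{1} = \sum_{\mu_{i}\neq 0}b_{i}\kappa_{A,i}(a,u)(\tau\bullet u_{i})$ for every $\tau\in T_{A}(a,u)$; taking $v_{1} = \sum_{\mu_{i}\neq 0}\mu_{i}^{-1}b_{i}u_{i}$ gives $\ap\Der\bm{\xi}_{A}(x)(v_{1}) = \tau_{1}$ and, by \ref{range of the ap diff of the n.p.p.}, $\ap\Der\bm{\nu}_{A}(x)(v_{1}) = \sum_{\mu_{i}\neq 0}b_{i}\kappa_{A,i}(a,u)u_{i}$, whence $Q_{A}(a,u)(\tau,\tau_{1}) = \tau\bullet\ap\Der\bm{\nu}_{A}(x)(v_{1}) = \tau\bullet\sigma_{1}$ by \ref{second fundamental form}. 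For (3), I would apply the coarea formula (\cite[3.2.22]{MR0257325}, localised in the target through charts of $\mathbf{S}^{n-1}$) to the $1$-Lipschitzian map $\mathbf{q}|N(A)\colon N(A)\to\mathbf{S}^{n-1}$: the set $N(A)$ is countably $n-1$ rectifiable by \ref{bi-lipschitz parametrization of the unit normal bundle} and $\sigma$-finite with respect to $\Haus{n-1}$ by \ref{remark on S lambda}, the fibre over $v$ is $\{a : (a,v)\in N(A)\}\times\{v\}$ equipped with $\Haus{0}$, and by (1) the coarea factor of $\mathbf{q}|N(A)$ at $(a,u)\in R$ equals $\prod_{i=1}^{n-1}|\kappa_{A,i}(a,u)|(1+\kappa_{A,i}(a,u)^{2})^{-1/2}$, since $\mathbf{q}$ maps the $i$-th basis vector of (1) to $(1+\kappa_{A,i}(a,u)^{2})^{-1/2}\kappa_{A,i}(a,u)u_{i}$ and $u_{1},\dots,u_{n-1}$ is an orthonormal basis of $\Tan(\mathbf{S}^{n-1},u) = T$.

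The main obstacle is the first step: one must carefully produce, on a single $\Haus{n-1}$ full subset of $N(A)$, the compatibility of the $\Leb{n}$-based approximate differential $\ap\Der\bm{\psi}_{A}$ with the $n-1$ dimensional rectifiable structure of $N(A)$ — so that the measure-theoretic tangent plane of $N(A)$ at $(a,u)$ is exactly $\ap\Der\bm{\psi}_{A}(x)[T]$, which requires passing through the bi-Lipschitzian pieces $\bm{\psi}_{A}[S_{\lambda}(A,r)]$, using that nested $n-1$ rectifiable subsets share their approximate tangent planes almost everywhere, and combining over a countable family of radii by the Coarea formula — and then transfer this identity to the weighted measure $\psi$ via the approximate continuity of $\theta$. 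Once this is in place, (1)--(3) reduce to the elementary eigenvalue bookkeeping above.
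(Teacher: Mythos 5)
Your proposal is correct and follows essentially the same route as the paper: reduce to the bi-Lipschitzian pieces $\bm{\psi}_{A}[S_{\lambda}(A,r)]$, transport the tangent plane of $S_{\lambda}(A,r)$ through $\ap\Der\bm{\psi}_{A}(x)$ via \ref{approx tangent cone and bilipschitz maps}, diagonalize $\ap\Der\bm{\xi}_{A}(x)$ on $\{v: v\bullet u=0\}$ and translate eigenvalues through \ref{range of the ap diff of the n.p.p.} and \ref{representation of principal curvatures}, then obtain (3) from the coarea formula applied to $\mathbf{q}$. The only cosmetic differences are that the paper handles the weight $\theta$ via the appendix lemma \ref{countably rectifiable sets and Radon measures} rather than approximate continuity, and applies the coarea formula piecewise on $\bm{\psi}_{A}[S_{\lambda}(A,r)]$ before letting $r\to 0$ rather than directly on $N(A)$.
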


\begin{proof}
	The first part of \eqref{basic facts about s.f.f. 1} directly follows from \ref{countably rectifiable sets and Radon measures} and \ref{bi-lipschitz parametrization of the unit normal bundle}. We fix now $ \lambda > 1 $. For $ r>0 $ let $ P_{r} $ be the set of $x \in R(A) \cap D(A_{\lambda}) \cap S(A,r)$ such that the following two conditions are satisfied:
	\begin{equation*}
	\ap\Tan(S_{\lambda}(A,r),x) = \Real{n}\cap \{ v : v \bullet \bm{\nu}_{A}(x)=0  \},
	\end{equation*}
	\begin{center}
$	\Tan^{n-1}(\Haus{n-1}\restrict \bm{\psi}_{A}[S_{\lambda}(A,r)], \bm{\psi}_{A}(x)) = \Tan^{n-1}(\psi, \bm{\psi}_{A}(x)) $ is an $ n-1 $ dimensional plane.
	\end{center}
If $ r>0$ and $ x \in P_{r}$ it follows from \ref{A lambda}\eqref{differentiable extension of psi}, \ref{remark on S lambda}, \ref{approx tangent cone and bilipschitz maps} and \ref{approx tangent cone and bilipschitz maps:remark} that
	\begin{equation*}
	 \ap \Der \bm{\psi}_{A}(x)[\ap \Tan( S_{\lambda}(A,r), x)] = \Tan^{n-1}(\Haus{n-1}\restrict \bm{\psi}_{A}[S_{\lambda}(A,r)], \bm{\psi}_{A}(x)),
	\end{equation*}
	\begin{equation*}
	\mathbf{p}[\Tan^{n-1}(\psi, \bm{\psi}_{A}(x))] = \im \ap \Der \bm{\xi}_{A}(x), 
	\end{equation*}
	\begin{equation*}
		Q_{A}(\bm{\psi}_{A}(x))(\tau, \tau_{1}) = \tau \bullet \sigma_{1} 
	\end{equation*}
for $ \tau, \tau_{1} \in T_{A}(\bm{\psi}_{A}(x)) $ and $ (\tau_{1}, \sigma_{1}) \in \Tan^{n-1}(\psi, \bm{\psi}_{A}(x)) $ and if $\{ v_{1}, \ldots , v_{n-1} \} $ is an orthonormal basis of $ \ap \Tan(S_{\lambda}(A,r),x) $ such that $\ap \Der \bm{\nu}_{A}(x)(v_{i})= \chi_{A,i}(x)v_{i}$ for $ i = 1, \ldots , n-1 $, then we can easily check using \ref{representation of principal curvatures} that
	\begin{equation*}
	\bigg\{\bigg(\frac{1}{(1+\kappa_{A,i}(\bm{\psi}_{A}(x))^{2})^{1/2}} v_{i}, \frac{ \kappa_{A,i}(\bm{\psi}_{A}(x))}{(1+\kappa_{A,i}(\bm{\psi}_{A}(x))^{2})^{1/2}}v_{i} \bigg) : \; 1 \leq i \leq n-1 \bigg\}
	\end{equation*}
	is an orthonormal basis of $\Tan^{n-1}(\Haus{n-1}\restrict \bm{\psi}_{A}[S_{\lambda}(A,r)],\bm{\psi}_{A}(x))$.
Noting that
	\begin{equation*}
	\Haus{n-1}(S_{\lambda}(A,r) \sim P_{r}) =0 \quad \textrm{and} \quad \Haus{n-1}(\bm{\psi}_{A}[S_{\lambda}(A,r)] \sim \bm{\psi}_{A}[P_{r}] ) =0
	\end{equation*}
for $ \Leb{1}$ a.e.\ $ r > 0 $ and \ref{bi-lipschitz parametrization of the unit normal bundle}, the second part of \eqref{basic facts about s.f.f. 1} and \eqref{basic facts about s.f.f. 2} follow.
	
	Finally, when $f$ is a nonnegative $ (\Haus{n-1}\restrict N(A)) $ measurable $\overline{\Real{}}$ valued function, we may apply \cite[3.2.22(3)]{MR0257325} with $W$, $Z$ and $f$ replaced by $\bm{\psi}_{A}[S_{\lambda}(A,r)]$, $\mathbf{S}^{n-1}$ and $\mathbf{q}|\bm{\psi}_{A}[S_{\lambda}(A,r)]  $ to conclude
\begin{flalign*}
	& \int_{\bm{\psi}_{A}[S_{\lambda}(A,r)]} f(a,u)\, \prod_{i=1}^{n-1}| \kappa_{A,i}(a,u) | (1+\kappa_{A,i}(a,u)^{2})^{-1/2} \, d\Haus{n-1}(a,u) \\
	& \quad = \int_{\mathbf{S}^{n-1}}\int_{\{a : (a,v) \in \bm{\psi}_{A}[S_{\lambda}(A,r)] \} \times \{v\}}f \, d\Haus{0}\, d\Haus{n-1}v
	\end{flalign*}
for $\Leb{1}$ a.e.\ $ r > 0 $ and \eqref{basic facts about s.f.f. 3} is a consequence of \ref{bi-lipschitz parametrization of the unit normal bundle} and \cite[2.4.7]{MR0257325}. The general case asserted in \eqref{basic facts about s.f.f. 3} is then a consequence of \cite[2.4.4]{MR0257325}.
\end{proof}

\begin{Remark}\label{comparison with Hug et all.}
It follows from \ref{basic facts about s.f.f.}\eqref{basic facts about s.f.f. 1} that the principal curvatures on $N(A)$ introduced in \cite[p.\ 244]{MR2031455} coincide on $\Haus{n-1}$ almost all of $N(A)$ with the principal curvatures introduced in \ref{principal curvatures}. 
\end{Remark}

\begin{Remark}\label{comparison with Fu}
	In case $\reach(A) > 0$, it follows from \ref{basic facts about s.f.f.}\eqref{basic facts about s.f.f. 2} that $Q_{A}$ coincides with the second fundamental form of $A$ introduced in \cite[4.5]{MR1021369} on $\Haus{n-1}$ almost all of $N(A)$. 
\end{Remark}

\begin{Remark}
	It is not difficult to check using \ref{basic facts about s.f.f.}\eqref{basic facts about s.f.f. 2} that \emph{if $ A $ and $ B $ are closed subsets of $ \Real{n} $ then 
		\begin{equation*}
		Q_{A}(a,u) = Q_{B}(a,u) \quad \textrm{for $ \Haus{n-1} $ a.e.\ $(a,u) \in N(A) \cap N(B) $.}
		\end{equation*}}
\end{Remark} 
\section{Stratification and support measures}\label{section: curvature and strata}

Recalling that $\bm{\xi}_{A}^{-1}\{a\}$ is a convex subset for every $ a \in A $, see \ref{Unp}, we introduce the following stratification.
	
\begin{Definition}\label{strata}
		Suppose $ A $ is a closed subset of $ \Real{n} $. For each $ 0 \leq m \leq n $, we define \textit{the $m$-th stratum of $A$} by\footnote{The dimension of a convex subset $ K $ of $\Real{n}$ is the dimension of the affine hull of $K $ and it is denoted by $\dim K $.}
		\begin{equation*}
			A^{(m)} = A \cap \{ a : \dim \bm{\xi}_{A}^{-1}\{a\} = n-m \}.
		\end{equation*}
	\end{Definition}

\begin{Remark}\label{characterization of the strata in terms of Normal bundle}
In \cite[4.1]{2017arXiv170309561M} the \textit{distance bundle of $A$} is defined as
\begin{equation*}
	\Dis(A) = (\Real{n}  \times \Real{n}) \cap \{  (a,v): a \in A, \; |v| = \bm{\delta}_{A}(a+v) \}
\end{equation*}
and $\Dis(A,a) = \{ v : (a,v)\in A  \}$ is a closed convex subset of $ \Nor(A,a) $ with $ 0 \in \Dis(A,a) $ for every $ a \in A $, see \cite[4.2]{2017arXiv170309561M}. One readily sees that
	\begin{equation*}
	N(A) = \{ (a, |v|^{-1}v) : 0 \neq v \in \Dis(A,a)  \}
	\end{equation*}
and it follows from \cite[4.4]{2017arXiv170309561M} that
\begin{equation*}
\dim \Dis(A,a) = \dim \bm{\xi}_{A}^{-1}\{a\} \quad  \textrm{whenever $ a \in A $},
\end{equation*}
	\begin{equation*}
A^{(m)} = A \cap \{ a : \dim \Dis(A,a) = n-m \}.
\end{equation*} 
It is proved in \cite[4.12]{2017arXiv170309561M} that $ A^{(m)} $ is a countably $ m $ rectifiable Borel set which can be $ \Haus{m} $ almost covered by the union of a countable family of $ m $ dimensional submanifolds of $ \Real{n} $ of class $ 2 $. Finally one may use Coarea formula to infer that
	\begin{equation*}
A^{(m)} = A \cap \{ a :   0 < \Haus{n-m-1}(N(A,a)) < \infty \} \quad \textrm{for $ m = 0, \ldots , n-1 $.}
\end{equation*} 
\end{Remark}

\begin{Lemma}\label{upper bound on the dimension of the tangent space}
	Suppose $A \subseteq \Real{n}$ is closed, $ 0 \leq m \leq n-1 $ is an integer and $ x \in \bm{\xi}_{A}^{-1}[A^{(m)}] $ such that $ \ap \liminf_{y \to x}\rho(A,y) \geq \rho(A,x)> 1 $ and $ \bm{\xi}_{A} $ is approximately differentiable at $ x $.
	
	Then $\dim \im \ap \Der \bm{\xi}_{A}(x) \leq m $.\ In particular, $\dim T_{A}(a,u) \leq m$ if $(a,u)$ is a regular point of $N(A)$ such that $a \in A^{(m)}$.
\end{Lemma}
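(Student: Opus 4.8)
The plan is to use the convexity of the fibre $\bm{\xi}_{A}^{-1}\{a\}$, where $a=\bm{\xi}_{A}(x)$, together with the datum $a\in A^{(m)}$, in order to exhibit an $(n-m)$-dimensional linear subspace annihilated by $\ap\Der\bm{\xi}_{A}(x)$, whence $\dim\im\ap\Der\bm{\xi}_{A}(x)\le m$ by the rank--nullity relation.

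First I would fix $\lambda$ with $1<\lambda<\rho(A,x)$. The hypothesis $\ap\liminf_{y\to x}\rho(A,y)\ge\rho(A,x)$ gives $\Ldensity{n}{\Real{n}\sim A_{\lambda}}{x}=0$, so approximate differentiability of $\bm{\xi}_{A}$ at $x$ passes to $\bm{\xi}_{A}|A_{\lambda}$; thus $x\in D(A_{\lambda})$ and, taking $\Psi$ as in \ref{A lambda}\eqref{differentiable extension of psi} and $F=\mathbf{p}\circ\Psi$, the map $F:\Real{n}\to\Real{n}$ extends $\bm{\xi}_{A}|A_{\lambda}$ and is \emph{genuinely} differentiable at $x$ with $\Der F(x)=\ap\Der\bm{\xi}_{A}(x)=:L$. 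Genuine (rather than merely approximate) differentiability is what will allow me to test $L$ along the fibre, which is an $\Leb{n}$-null set.

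The key step is the identification
\begin{equation*}
\bm{\xi}_{A}^{-1}\{a\}\cap A_{\lambda}=\big(a+\lambda^{-1}\Dis(A,a)\big)\sim\{a\}.
\end{equation*}
To see it, one unwinds \ref{definition of rho}: for $y\in U(A)$ with $\bm{\xi}_{A}(y)=a$ one has $\rho(A,y)=\sup\{t:t(y-a)\in\Dis(A,a)\}$, using that $\bm{\delta}_{A}(a+v)=|v|$ exactly when $v\in\Dis(A,a)$ and that $\Dis(A,a)$ is a closed convex set containing $0$; hence $\rho(A,y)\ge\lambda$ iff $\lambda(y-a)\in\Dis(A,a)$, the inclusion $a+\lambda^{-1}\Dis(A,a)\subseteq\bm{\xi}_{A}^{-1}\{a\}$ following from \ref{n.p.p. along radial directions}. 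Now $a\in A^{(m)}$ gives $\dim\Dis(A,a)=n-m$ by \ref{characterization of the strata in terms of Normal bundle}; since $x\ne a$ and $x$ belongs to the set on the left, near $x$ the slice $C:=\bm{\xi}_{A}^{-1}\{a\}\cap A_{\lambda}$ coincides with the $(n-m)$-dimensional convex set $a+\lambda^{-1}\Dis(A,a)$. It is \emph{not} claimed that $x$ lies in the relative interior of this convex set, and in general it does not; but the tangent cone of a convex set at any of its points spans the direction $W$ of its affine hull, and here $\dim W=n-m$, so $\Tan(C,x)$ spans $W$. This combination — the identification of the fibre slice with a rescaled copy of $\Dis(A,a)$, plus the elementary convexity fact that removes the need for $x$ to be relatively interior — is the crux of the proof; the rest is bookkeeping.

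To conclude, $F$ is constant and equal to $a$ on $C$ and differentiable at $x\in C$, so $L(v)=0$ for every $v\in\Tan(C,x)$: choosing $y_{i}\in C$ with $y_{i}\to x$ and $(y_{i}-x)/|y_{i}-x|\to v/|v|$ and dividing $0=F(y_{i})-F(x)=L(y_{i}-x)+o(|y_{i}-x|)$ by $|y_{i}-x|$ gives $L(v/|v|)=0$. Since $\ker L$ is a linear subspace containing $\Tan(C,x)$, it contains $W$, so $\dim\im L=n-\dim\ker L\le m$. For the last assertion, if $(a,u)\in R(N(A))$ with $a\in A^{(m)}$ then by \ref{regular points of N(A)} and \ref{regular points} there is a regular point $x$ of $\bm{\xi}_{A}$ with $\bm{\psi}_{A}(x)=(a,u)$, which satisfies the standing hypotheses, and $T_{A}(a,u)=\im\ap\Der\bm{\xi}_{A}(x)$ by \ref{second fundamental form}, whence $\dim T_{A}(a,u)\le m$.
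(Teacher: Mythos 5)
Your proof is correct and follows essentially the same route as the paper: fix $1<\lambda<\rho(A,x)$, use the differentiable extension $F$ of $\bm{\xi}_{A}|A_{\lambda}$ from \ref{A lambda}\eqref{differentiable extension of psi}, and kill $\ap\Der\bm{\xi}_{A}(x)$ on the $(n-m)$-dimensional direction space of the convex fibre slice $C=\bm{\xi}_{A}^{-1}\{a\}\cap A_{\lambda}$ on which $F$ is constant. The only (cosmetic) differences are that you obtain $\dim C=n-m$ by identifying $C$ with $\big(a+\lambda^{-1}\Dis(A,a)\big)\sim\{a\}$ and invoking \ref{characterization of the strata in terms of Normal bundle}, where the paper argues via the relative interior of $\bm{\xi}_{A}^{-1}\{a\}$, and that you phrase the annihilation step through the tangent cone of the convex set at $x$ rather than through its affine hull.
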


\begin{proof}
	Let $a= \bm{\xi}_{A}(x)$, $1 < \lambda < \rho(A,x) $  and $C=\bm{\xi}_{A}^{-1}[\{a\}] \cap A_{\lambda}$. First we prove that $C$ is a convex subset of $\Real{n}$ and 
	\begin{equation*}
	\dim C  = \dim \bm{\xi}_{A}^{-1}\{a\}=n-m.
	\end{equation*}
	In fact, $C = \{ y : \bm{\delta}_{A}(a + \lambda(y-a)) = \lambda |y-a| \}$ by \ref{basic properties of rho} and \ref{n.p.p. along radial directions} and $C$ is convex by \cite[4.8(2)]{MR0110078}. Moreover, if $ U $ is the relative interior\footnote{The relative interior of a convex subset $K$ of $\Real{n}$ is the interior of $K$ relative to the affine hull of $K$.} of $\bm{\xi}_{A}^{-1}\{a\}$, then $\{y : a + \lambda (y-a) \in U\}$ is contained in $C$ and it is open relative to the affine hull of $\bm{\xi}_{A}^{-1}\{a\}$. Therefore $\dim C = \dim \bm{\xi}_{A}^{-1}\{a\}$. 
		
		By \ref{A lambda}\eqref{differentiable extension of psi}, let $ F : \Real{n} \rightarrow \Real{n} $ be an extension of $\bm{\xi}_{A}|A_{\lambda} $ that is differentiable at $x$ with $\Der F(x) = \ap \Der \bm{\xi}_{A}(x)$. Since $F(y) = a$ whenever $y \in C$, we conclude that $\Der F(x)(y-x) =0 $ whenever $ y \in C $. Therefore $\Der F(x)(y-x)=0 $ whenever $ y $ belongs to the affine hull of $ C $. Since $\dim C = n-m $, we conclude 
		\begin{equation*}
		\dim \im \ap \Der \bm{\xi}_{A}(x) \leq m.
		\end{equation*} 
\end{proof}

We point out a Coarea-type formula for the generalized normal bundle.

\begin{Lemma}\label{Area formula Gauss map}
	If $ A \subseteq \Real{n} $ is closed set, $f$ is a $ (\Haus{n-1}\restrict N(A)) $ integrable $ \overline{\Real{}} $ valued function and $ 0 \leq m \leq n-1 $ then
			\begin{flalign*}
			& \int_{N(A)|A^{(m)}} f(a,u)\, \prod_{i=1}^{m}\frac{1}{(1+\kappa_{A,i}(a,u)^{2})^{1/2}} \, d\Haus{n-1}(a,u) \\
			& \quad = \int_{A^{(m)}}\int_{\{z\} \times N(A,z)}f \, d\Haus{n-m-1}\, d\Haus{m}z.
			\end{flalign*}
	\end{Lemma}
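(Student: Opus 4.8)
The plan is to reduce this Coarea-type formula to the previously established identity \ref{basic facts about s.f.f.}\eqref{basic facts about s.f.f. 3} by means of an iterated-integral decomposition, handling the extra factor $\prod_{i=1}^{m}(1+\kappa_{A,i}^{2})^{-1/2}$ through an application of the classical Coarea formula on each stratum $A^{(m)}$. Before that, I would record the two structural facts that make the bookkeeping work: first, by \ref{upper bound on the dimension of the tangent space}, for $\Haus{n-1}$ a.e.\ $(a,u) \in N(A)|A^{(m)}$ one has $\dim T_{A}(a,u) \le m$, hence $\kappa_{A,i}(a,u) = \infty$ for $i > m$ (equivalently $\dim T_{A}(a,u) = m$ for $\Haus{n-1}$ a.e.\ such point, by \ref{characterization of the strata in terms of Normal bundle} which gives $\Haus{n-m-1}(N(A,a)) \in (0,\infty)$ and the fiber dimension count); second, since $\kappa_{A,m+1} = \dots = \kappa_{A,n-1} = \infty$ on that set, the full product in \eqref{basic facts about s.f.f. 3} collapses:
\begin{equation*}
\prod_{i=1}^{n-1} \frac{|\kappa_{A,i}(a,u)|}{(1+\kappa_{A,i}(a,u)^{2})^{1/2}} = \prod_{i=1}^{m} \frac{|\kappa_{A,i}(a,u)|}{(1+\kappa_{A,i}(a,u)^{2})^{1/2}}
\end{equation*}
with the convention that the $i>m$ factors equal $1$. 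So it suffices to relate the left side of the asserted formula to the left side of \eqref{basic facts about s.f.f. 3} restricted to $N(A)|A^{(m)}$.

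Next I would set up the inner integral. Fix $z \in A^{(m)}$ so that $N(A,z)$ is (up to $\Haus{n-m-1}$-null sets) a relatively open subset of the unit sphere of $\Nor(A,z)$, an $(n-m-1)$-dimensional spherical region; equivalently $\Dis(A,z)$ is an $(n-m)$-dimensional convex body in the $(n-m)$-plane $\Nor(A,z)$ with $0$ in its interior relative to that plane. The map $v \mapsto (z,|v|^{-1}v)$ identifies $\Dis(A,z)\setminus\{0\}$ with $\{z\}\times N(A,z)$ up to the radial direction. The key computation is that, for $\Haus{n-1}$ a.e.\ $(z,u)\in N(A)|A^{(m)}$, the approximate tangent plane $\Tan^{n-1}(\Haus{n-1}\restrict N(A),(z,u))$ splits as an $m$-dimensional "horizontal" part (the graph over $T_{A}(z,u)$ governed by $\mathbf{b}$, i.e.\ the part described in \ref{basic facts about s.f.f.}\eqref{basic facts about s.f.f. 1} with the $\kappa_{A,i}$, $i\le m$) plus the $(n-m-1)$-dimensional "vertical" part $\{0\}\times(T_{z}N(A,z))$ tangent to the fiber sphere, and these two pieces are orthogonal in $\Real{n}\times\Real{n}$. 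From the orthonormal basis in \ref{basic facts about s.f.f.}\eqref{basic facts about s.f.f. 1} together with an orthonormal basis of the fiber-tangent part, one reads off the Jacobian: the projection $\mathbf{q}|N(A)$ onto $\mathbf{S}^{n-1}$ has approximate Jacobian $\prod_{i=1}^{m}|\kappa_{A,i}|(1+\kappa_{A,i}^2)^{-1/2}$ (the vertical directions contribute Jacobian $1$ since $\mathbf{q}$ restricted there is an isometry onto the tangent of the small sphere), while the "radial-cum-stratum" projection has the complementary Jacobian $\prod_{i=1}^{m}(1+\kappa_{A,i}^2)^{-1/2}$.

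With those Jacobians in hand, I would invoke the Coarea formula \cite[3.2.22]{MR0257325} applied to the Lipschitz (locally, on the sets $\bm{\psi}_A[S_\lambda(A,r)]\cap A^{(m)}$, by \ref{bi-lipschitz parametrization of the unit normal bundle} and \ref{remark on S lambda}) map $\mathbf{p}|N(A)|A^{(m)} : N(A)|A^{(m)} \to A^{(m)}$, whose fibers over $z\in A^{(m)}$ are $\{z\}\times N(A,z)$, of Hausdorff dimension $n-m-1$. This yields
\begin{equation*}
\int_{N(A)|A^{(m)}} g(a,u)\, \prod_{i=1}^{m}\frac{1}{(1+\kappa_{A,i}(a,u)^{2})^{1/2}}\, d\Haus{n-1}(a,u) = \int_{A^{(m)}} \int_{\{z\}\times N(A,z)} g \, d\Haus{n-m-1}\, d\Haus{m}z
\end{equation*}
for nonnegative $\Haus{n-1}\restrict N(A)$-measurable $g$, which is exactly the claim; the passage to integrable $\overline{\Real{}}$-valued $f$ is \cite[2.4.4]{MR0257325}, and the reduction from the whole bundle to the pieces $S_\lambda(A,r)$ is handled as in the proof of \ref{basic facts about s.f.f.} using \cite[2.4.7]{MR0257325}. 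The main obstacle I anticipate is the careful verification that the approximate tangent plane to $N(A)$ at a regular point over $A^{(m)}$ genuinely splits orthogonally into the $m$-dimensional graph part and the $(n-m-1)$-dimensional fiber-sphere part, and that the relevant approximate Jacobian of $\mathbf{p}|N(A)|A^{(m)}$ factors as stated; this requires combining the explicit basis of \ref{basic facts about s.f.f.}\eqref{basic facts about s.f.f. 1} with the second-order structure of the strata from \cite[4.12]{2017arXiv170309561M} (via \ref{characterization of the strata in terms of Normal bundle}) to identify $\Tan^{n-1}(\Haus{n-1}\restrict N(A),\cdot)$ along the fiber directions, and it is where the geometry, rather than the measure theory, does the work.
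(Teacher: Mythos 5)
Your argument is essentially the paper's proof: collapse the curvature product using $\kappa_{A,i}(a,u)=\infty$ for $i>m$ at $\Haus{n-1}$ a.e.\ point of $N(A)|A^{(m)}$ (via \ref{upper bound on the dimension of the tangent space}), apply the coarea formula to $\mathbf{p}$ on the rectifiable pieces $\bm{\psi}_{A}[S_{\lambda}(A,r)]$ with the $m$-dimensional Jacobian $\prod_{i=1}^{m}(1+\kappa_{A,i}^{2})^{-1/2}$ read off from the orthonormal basis in \ref{basic facts about s.f.f.}\eqref{basic facts about s.f.f. 1}, then exhaust using \ref{bi-lipschitz parametrization of the unit normal bundle} and pass from nonnegative to integrable $f$. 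One parenthetical aside is false but unused: $\dim T_{A}(a,u)=m$ need not hold $\Haus{n-1}$ a.e.\ on $N(A)|A^{(m)}$ (the example in \ref{Cantor function} produces a set of positive $\Haus{1}$ measure in $N(A)|A^{(1)}$ on which $T_{A}(a,u)=\{0\}$); only the bound $\dim T_{A}(a,u)\leq m$, equivalently $\kappa_{A,i}=\infty$ for $i>m$, is needed, and that is all your Jacobian computation actually uses.
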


\begin{proof}
We assume $ f \geq 0 $ on $\Haus{n-1}$ almost all of $N(A)$, since, as usual, the general case follows from \cite[2.4.4]{MR0257325}. Since $A^{(0)}$ is a countable set by \ref{characterization of the strata in terms of Normal bundle}, the case $ m =0 $ is clear. Therefore we assume $ m \geq 1 $, we let $ \lambda > 1 $ and we define $ C_{i} = \bm{\psi}_{A}[S_{\lambda}(A,1/i)] $ for every integer $ i \geq 1 $. Since $\kappa_{A,m+1}(a,u) = \infty $ for $\Haus{n-1}$ a.e.\ $(a,u)\in N(A)|A^{(m)}$ by \ref{upper bound on the dimension of the tangent space}, noting \ref{remark on S lambda}, the conclusion can be easily derived in two simple steps: first we apply Coarea formula \cite[p.\ 300]{MR0467473} with $ W $, $ f $ and $ S $ replaced by $ C_{i} $, $ \mathbf{p}|C_{i} $ and $A^{(m)}$ respectively, second we let $ i \to \infty $ and we recall \ref{bi-lipschitz parametrization of the unit normal bundle}.
\end{proof}

\begin{Remark}
	If $ \reach(A)> 0 $ and $ f $ is the characteristic funtion of a Borel subset of $ N(A) $ then the conclusion of \ref{Area formula Gauss map} is essentially contained in \cite[3.2]{MR1652084}.
\end{Remark}

\begin{Remark}\label{Area formula Gauss map:remark}
	The following corollary can be deduced from \ref{Area formula Gauss map}. \emph{If $ S \subseteq A $ and $ 1 \leq m \leq n-1 $ then $ \Haus{m}(S \cap A^{(m)}) =0 $ if and only if 
	\begin{equation*}
	\kappa_{A,m}(a,u) = \infty \quad \textrm{for $ \Haus{n-1} $ a.e.\ $ (a,u) \in N(A)|S \cap A^{(m)}$}.
	\end{equation*}}
\end{Remark}

We obtain here an integral representation for the support measures.

\begin{Theorem}\label{representation of support measures}
	Suppose $ A \subseteq \Real{n} $ is a closed set, $ \mu_{0}, \ldots , \mu_{n-1} $ are the support measures of $ A $, $ 1 \leq m \leq n-1 $ is an integer and $ S $ is a countable union of Borel subsets with finite $ \Haus{m} $ measure.
	
	Then the following two statements hold.
\begin{enumerate}
	\item \label{representation of support measures:2} If $ j > m $ then $ \kappa_{A,m}(x,u) = \infty $ for $ \Haus{n-1} $ a.e.\ $(x,u) \in N(A)|S \cap A^{(j)} $;
	\item \label{representation of support measures:3} if $ T \subseteq N(A)|S $ is $ \Haus{n-1} $ measurable then
		\begin{equation*}
	\mu_{m}(T) = \frac{1}{(n-m)\bm{\alpha}(n-m)}\int \Haus{n-m-1}\{v : (z,v) \in T \}d\Haus{m}z.
	\end{equation*}
\end{enumerate}
\end{Theorem}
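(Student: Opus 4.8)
The plan is to reduce everything to the two combinatorial facts already at our disposal: the definition of the support measures via \eqref{explicit formula for support measures}, \eqref{symmetric functions}, and the Coarea-type formula \ref{Area formula Gauss map}. The first step is to prove \eqref{representation of support measures:2}. Fix $j>m$ and consider $N(A)|S\cap A^{(j)}$. By \ref{upper bound on the dimension of the tangent space}, for $\Haus{n-1}$ a.e.\ $(x,u)\in N(A)$ with $x\in A^{(j)}$ we have $\dim T_{A}(x,u)\leq j$, so at most $j$ of the principal curvatures $\kappa_{A,1}\leq\ldots\leq\kappa_{A,n-1}$ are finite; equivalently $\kappa_{A,j+1}(x,u)=\infty$. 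But this is not quite what is claimed — we need $\kappa_{A,m}=\infty$, which says at most $m-1$ curvatures are finite. So the upper bound alone is insufficient; I would invoke \ref{Area formula Gauss map:remark} applied on each stratum. Write $S\cap A^{(j)}=\bigcup_k S_k$ with $\Haus{j}(S_k)<\infty$; since $S_k\subseteq A\cap A^{(j)}$ and we are comparing against the $m$-dimensional measure with $m<j$, a Federer-type dimension comparison gives $\Haus{m}(S_k\cap A^{(m)})=\Haus{m}(\varnothing)=0$ because $A^{(m)}$ and $A^{(j)}$ are disjoint; then \ref{Area formula Gauss map:remark} with the roles set up for the index $m$ yields $\kappa_{A,m}(x,u)=\infty$ for $\Haus{n-1}$ a.e.\ $(x,u)\in N(A)|S_k$. (The disjointness $A^{(m)}\cap A^{(j)}=\varnothing$ is immediate from \ref{strata} since a convex set has a well-defined dimension.) Taking the union over $k$ finishes \eqref{representation of support measures:2}.

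For \eqref{representation of support measures:3}, start from \eqref{explicit formula for support measures}: with $i=m$,
\begin{equation*}
\mu_{m}(T)=\frac{1}{(n-m)\bm{\alpha}(n-m)}\int_{T}H_{n-m-1}\,d\Haus{n-1},
\end{equation*}
where $H_{n-m-1}$ is the $(n-m-1)$-st symmetric function of the $\kappa_{A,i}$'s weighted by $\prod_{i=1}^{n-1}(1+\kappa_{A,i}^{2})^{-1/2}$, with the convention that a factor $(1+\kappa^{2})^{-1/2}$ equals $0$ when $\kappa=\infty$ and $\kappa/(1+\kappa^{2})^{1/2}$ equals $1$. (Here one must take care that $H_j$ in \eqref{symmetric functions} and $H_{n-i-1}$ in \eqref{explicit formula for support measures} match up with the $\kappa$'s rather than the $\lambda$'s; by \ref{comparison with Hug et all.} the two agree $\Haus{n-1}$-a.e., so this is harmless.) Decompose $T=\bigcup_{j=0}^{n-1}T\cap(N(A)|A^{(j)})$, a disjoint $\Haus{n-1}$-measurable partition using \ref{characterization of the strata in terms of Normal bundle}. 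On the piece over $A^{(j)}$ with $j<m$: here $\dim T_A(x,u)\le j<m$, so at least $n-m$ of the curvatures are $\infty$; in the monomial expansion of $H_{n-m-1}$ every product of $n-m-1$ factors $\kappa_{A,l_i}$ must avoid $n-m$ of the infinite curvatures, which is impossible, so $H_{n-m-1}=0$ there and that piece contributes nothing. On the piece over $A^{(j)}$ with $j>m$: by \eqref{representation of support measures:2} we have $\kappa_{A,m}=\infty$, hence $\kappa_{A,m}=\cdots=\kappa_{A,n-1}=\infty$ and only $\kappa_{A,1},\ldots,\kappa_{A,m-1}$ are finite; again any product of $n-m-1\ge 1$ of the $\kappa$'s that contributes to $H_{n-m-1}$ with a nonzero weight would need $n-m-1$ finite curvatures, but there are only $m-1< n-m-1$... wait — this inequality only holds when $2m<n$, so I cannot argue this way in general. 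Instead: over $A^{(j)}$ with $j>m$ the weight $\prod_{i=1}^{n-1}(1+\kappa_{A,i}^2)^{-1/2}$ contains at least $n-m$ factors equal to $0$, while each monomial in $H_{n-m-1}$ supplies at most $n-m-1$ compensating factors $\kappa_{A,l_i}/(1+\kappa_{A,l_i}^2)^{1/2}$ (value $1$); hence at least one genuine zero factor survives and $H_{n-m-1}=0$ on this piece too. So only $j=m$ survives.

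On the stratum $N(A)|S\cap A^{(m)}$ we have $\kappa_{A,m+1}=\cdots=\kappa_{A,n-1}=\infty$ for a.e.\ point, so the weight collapses to $\prod_{i=1}^{m}(1+\kappa_{A,i}^2)^{-1/2}$ and in the sum defining $H_{n-m-1}$ the only surviving monomial is the one picking the $n-m-1$ indices $m+1,\ldots,n-1$ together with... no: $H_{n-m-1}$ chooses $n-m-1$ indices, and to avoid a zero weight it must choose among $\{1,\ldots,m\}$, but $n-m-1$ may exceed $m$ unless $2m\ge n-1$. Re-examining: a monomial with index set $\{l_1,\ldots,l_{n-m-1}\}$ has value $\big(\prod_{i=1}^{n-1}(1+\kappa_{A,i}^2)^{-1/2}\big)\prod\kappa_{A,l_i}$; the infinite-curvature factors $i\in\{m+1,\ldots,n-1\}$ give weight $0$ unless the index also appears among the $l$'s (turning $(1+\kappa^2)^{-1/2}\cdot\kappa$ into $1$). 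So a monomial is nonzero iff $\{m+1,\ldots,n-1\}\subseteq\{l_1,\ldots,l_{n-m-1}\}$; since $|\{m+1,\ldots,n-1\}|=n-1-m$ equals $n-m-1$, this forces $\{l_1,\ldots,l_{n-m-1}\}=\{m+1,\ldots,n-1\}$ exactly, and then
\begin{equation*}
H_{n-m-1}(x,u)=\prod_{i=1}^{m}\frac{1}{(1+\kappa_{A,i}(x,u)^{2})^{1/2}}
\end{equation*}
for $\Haus{n-1}$ a.e.\ $(x,u)\in N(A)|S\cap A^{(m)}$. Therefore
\begin{equation*}
\mu_{m}(T)=\frac{1}{(n-m)\bm{\alpha}(n-m)}\int_{T\cap(N(A)|A^{(m)})}\prod_{i=1}^{m}\frac{1}{(1+\kappa_{A,i}(x,u)^{2})^{1/2}}\,d\Haus{n-1}(x,u),
\end{equation*}
and applying \ref{Area formula Gauss map} with $f=\mathbf{1}_{T}$ turns the right-hand side into $\frac{1}{(n-m)\bm{\alpha}(n-m)}\int_{A^{(m)}}\Haus{n-m-1}\{v:(z,v)\in T\}\,d\Haus{m}z$, which since $T\subseteq N(A)|S$ is the same as integrating over $S\cap A^{(m)}$, i.e.\ the claimed formula.

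\textbf{Main obstacle.} The genuine work is the bookkeeping in the monomial expansion of $H_{n-m-1}$ — verifying that off the stratum $A^{(m)}$ the symmetric function vanishes $\Haus{n-1}$-a.e., and that on $A^{(m)}$ it reduces to exactly $\prod_{i=1}^{m}(1+\kappa_{A,i}^{2})^{-1/2}$ with no spurious combinatorial multiplicities. One must be scrupulous about the conventions for $\kappa=\infty$ (a $0\cdot\infty$ in each factor that resolves to a finite value) and about which index sets of size $n-m-1$ give nonzero contributions; the clean observation is that a monomial survives precisely when its index set contains all the infinite-curvature indices, and a counting of cardinalities does the rest. Everything else — the stratification partition, the disjointness of the $A^{(j)}$, the reduction of $\mu_m$ via \eqref{explicit formula for support measures}, and the final substitution — is routine once \ref{Area formula Gauss map}, \ref{upper bound on the dimension of the tangent space} and \ref{Area formula Gauss map:remark} are in hand.
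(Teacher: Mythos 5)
Your part \eqref{representation of support measures:3} follows the paper's route: partition over the strata, use \ref{upper bound on the dimension of the tangent space} for $j<m$ and part \eqref{representation of support measures:2} for $j>m$ to conclude $H_{n-m-1}=0$ off $A^{(m)}$, carry out the monomial bookkeeping to get $H_{n-m-1}=\prod_{i=1}^{m}(1+\kappa_{A,i}^{2})^{-1/2}$ on $A^{(m)}$, and finish with \ref{Area formula Gauss map}. That part, including the care taken with the $0\cdot\infty$ conventions and the identification via \ref{comparison with Hug et all.}, is correct.

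The gap is in part \eqref{representation of support measures:2}. You rightly note that \ref{upper bound on the dimension of the tangent space} only yields $\kappa_{A,j+1}=\infty$ over $A^{(j)}$, which is weaker than $\kappa_{A,m}=\infty$ when $j>m$. But your substitute --- applying \ref{Area formula Gauss map:remark} ``with the roles set up for the index $m$'' to $S_{k}\subseteq A^{(j)}$ --- proves nothing: the conclusion of \ref{Area formula Gauss map:remark} concerns only $\Haus{n-1}$ a.e.\ $(a,u)\in N(A)|S_{k}\cap A^{(m)}$, and since $A^{(m)}\cap A^{(j)}=\varnothing$ that set is empty, so the remark is vacuously true and says nothing about $N(A)|S_{k}$. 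Both \ref{Area formula Gauss map} and its corollary are stated with the stratum index equal to the coarea index; no statement you quote relates the $m$-th curvature to fibers over $A^{(j)}$ with $j\neq m$. What is needed --- and what the paper does --- is to run the coarea argument with mismatched indices: for $C_{i}=\bm{\psi}_{A}[S_{\lambda}(A,1/i)]$ and the Borel set $S_{i}\cap A^{(j)}$ of finite $\Haus{m}$ measure, the coarea formula for $\mathbf{p}|C_{i}$ with target dimension $m$ gives
\begin{flalign*}
& \int_{C_{i}|S_{i}\cap A^{(j)}}\big\|\textstyle\bigwedge_{m}\big[\mathbf{p}|\Tan^{n-1}(\Haus{n-1}\restrict C_{i},(x,u))\big]\big\|\,d\Haus{n-1}(x,u) \\
& \quad = \int_{S_{i}\cap A^{(j)}}\Haus{n-m-1}\big(\mathbf{p}^{-1}\{z\}\cap C_{i}\big)\,d\Haus{m}z,
\end{flalign*}
and the right side vanishes because for $z\in A^{(j)}$ the fiber lies in $\{z\}\times N(A,z)$, whose dimension $n-j-1$ is strictly less than $n-m-1$. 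Hence the $m$-Jacobian of $\mathbf{p}$ vanishes $\Haus{n-1}$ a.e.\ on $C_{i}|S_{i}\cap A^{(j)}$, so $\dim\mathbf{p}[\Tan^{n-1}(\Haus{n-1}\restrict C_{i},(x,u))]<m$ there; by \ref{basic facts about s.f.f.}\eqref{basic facts about s.f.f. 2} this projection equals $T_{A}(x,u)$ $\Haus{n-1}$ a.e., giving $\kappa_{A,m}(x,u)=\infty$, and one concludes by letting $i\to\infty$ and invoking \ref{bi-lipschitz parametrization of the unit normal bundle}. Note that this is precisely where the hypothesis that $S$ is a countable union of sets of finite $\Haus{m}$ measure is used; your version of part \eqref{representation of support measures:2} never uses it, which is a further sign that the argument cannot be complete.
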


\begin{proof}
	Suppose $ S_{1},S_{2}, \ldots $ is a sequence of Borel subsets with finite $ \Haus{m} $ measure whose union equals $ S $ and $ S_{i} \subseteq S_{i+1} $ for $ i \geq 1 $. Let $ \lambda > 1 $ and $ C_{i} = \bm{\psi}_{A}[S_{\lambda}(A, 1/i)] $. We apply the co-area formula in \cite[p.\ 300]{MR0467473} with $ W $, $ f $ and $ S $ replaced by $ C_{i} $, $ \mathbf{p}|C_{i} $ and $ S_{i} \cap A^{(j)} $ to infer that
	\begin{equation*}
	\int_{C_{i}|S_{i}\cap A^{(j)}}\|\textstyle \bigwedge_{m}[ \mathbf{p}|\Tan^{n-1}(\Haus{n-1}\restrict C_{i}, (x,u))] \| \, d\Haus{n-1}(x,u) =0
	\end{equation*}
whenever $ j > m $. It follows that
\begin{equation*}
\dim \mathbf{p}[\Tan^{n-1}(\Haus{n-1}\restrict C_{i}, (x,u))] < m,
\end{equation*}
whence we deduce that $ \kappa_{A,m}(x,u) = \infty $ for $ \Haus{n-1} $ a.e.\ $ (x,u) \in C_{i}|S_{i} \cap A^{(j)} $ and for $ j > m $ by \ref{basic facts about s.f.f.}\eqref{basic facts about s.f.f. 2}. Then we obtain \eqref{representation of support measures:2} letting $ i \to \infty $ and noting \ref{bi-lipschitz parametrization of the unit normal bundle}.

Since $ \kappa_{A,m}(x,u) = \infty $ for $ \Haus{n-1} $ a.e.\ $ (x,u) \in N(A)|A^{(j)} $ if $ j < m $ by \ref{upper bound on the dimension of the tangent space}, we conclude from \eqref{symmetric functions} that
\begin{equation*}
	H_{n-m-1}(x,u) =0 \quad \textrm{for $ \Haus{n-1} $ a.e.\ $ (x,u) \in N(A)|S \cap A^{(j)} $},
\end{equation*}
if $ j \neq m $. Since $ \kappa_{A,m+1}(x,u) = \infty $ for $ \Haus{n-1} $ a.e.\ $ (x,u) \in N(A)|A^{(m)} $ by \ref{upper bound on the dimension of the tangent space}, it follows that
\begin{equation*}
	H_{n-m-1}(x,u) = \prod_{i=1}^{m}\frac{1}{(1+\kappa_{A,i}(x,u)^{2})^{1/2}} \quad \textrm{for  $ \Haus{n-1} $ a.e.\ $ (x,u) \in N(A)|A^{(m)} $.}
\end{equation*}
Then \eqref{representation of support measures:3} follows from \ref{Area formula Gauss map}.
\end{proof}

\begin{Remark}
	The integral representation in \ref{representation of support measures}\eqref{representation of support measures:3} has been proved in \cite[5.5]{MR1742247} for sets of positive reach.
\end{Remark}

\begin{Remark}
Since $ A^{(n-1)} $ is countably $(n-1)$ rectifiable and $ \Haus{n-1}(A^{(i)}) =0 $ for $ i < n-1 $ (see \ref{characterization of the strata in terms of Normal bundle}) it follows from \ref{representation of support measures} that \emph{if $ T \subseteq N(A) $ is $ \Haus{n-1} $ measurable then
	\begin{equation*}
	\mu_{n-1}(T) = \frac{1}{2}\int \Haus{0}\{ v : (z,v)\in T  \}d\Haus{n-1}z.
	\end{equation*}}
This formula is equivalent to \cite[4.1]{MR2031455}.
\end{Remark}

\section{Relation with second order rectifiability}\label{section: relation with second order rectifiability}

In this final section we prove that, in a certain sense, the "absolutely continuous part" of the second fundamental form introduced in section \ref{section: s.f.f.} can be described by the approximate differential of order $ 2 $ introduced by the author in \cite{2017arXiv170107286S}, see \ref{agreement with classical second fundamental form}.

\begin{Lemma}\label{agreement with classical second fundamental form: lemma}
	Suppose $ A \subseteq \Real{n} $ is closed, $ 1 \leq m \leq n-1 $ and let $ M $ be an $ m $ dimensional submanifold of class $ 2 $.
	
	Then there exists $ R \subseteq A \cap M $ such that $ \Haus{m}((A \cap M) \sim R) =0 $ and 
	\begin{equation*}
	Q_{A}(a,u) = - \mathbf{b}_{M}(a) \bullet u \quad \textrm{for $ \Haus{n-1} $ a.e.\ $ (a,u) \in N(A)|R $.}
	\end{equation*}
\end{Lemma}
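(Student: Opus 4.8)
The plan is to reduce, by a covering argument, to the case in which $M$ is replaced by a compact set of positive reach — for which the second fundamental form is classical — and then to transport the conclusion to $A$ by means of the locality of $Q$ recorded at the end of Section~\ref{section: s.f.f.}. First, since $M$ is a $C^2$ submanifold I would choose countably many compact sets $P_i \subseteq M$, each of positive reach, such that $P_i$ coincides with $M$ on a neighbourhood (relative to $M$) of every point of its relative interior $\ri(P_i)$ and $M = \bigcup_i \ri(P_i)$; concretely, the $P_i$ may be taken to be the images of closed Euclidean balls under $C^2$ charts of $M$. In particular $\mathbf{b}_{P_i}(a) = \mathbf{b}_M(a)$ for $a \in \ri(P_i)$, and since $\reach(P_i) > 0$ one has $N(P_i,a) = \Nor(P_i,a) \cap \mathbf{S}^{n-1} = \Nor(M,a) \cap \mathbf{S}^{n-1}$ for $a \in \ri(P_i)$ by \cite[4.8(12)]{MR0110078}.

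Next I would define $R$ to be the set of points $a \in A \cap M$ that lie in $\ri(P_i)$ for some $i$ and that are points of $\Haus{m}$ density $1$ of $A \cap M$ relative to $M$. Since $M$ is locally bi-Lipschitzian to $\Real{m}$, the Lebesgue density theorem for $\Haus{m}\restrict M$ together with the covering property of the $\ri(P_i)$ gives $\Haus{m}((A\cap M) \sim R) = 0$. The decisive point is that for $a \in R$ one has $\Tan(M,a) = \Tan(A\cap M,a) \subseteq \Tan(A,a)$ — a set of $\Haus{m}$ density $1$ in a $C^1$ manifold has the full tangent cone of the manifold — whence, by passing to dual cones, $\Nor(A,a) \subseteq \Nor(M,a)$. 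Consequently, if $(a,u) \in N(A)|R$ and $a \in \ri(P_i)$, then $u \in \Nor(A,a)\cap \mathbf{S}^{n-1} \subseteq \Nor(M,a)\cap \mathbf{S}^{n-1} = N(P_i,a)$, so that $(a,u) \in N(A) \cap N(P_i)$.

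Then I would compute $Q_{P_i}$ on the pieces. Fix $i$ and $(a,u) \in N(P_i)$ with $a \in \ri(P_i)$, fix $0 < r < \reach(P_i)$, and put $x = a + ru$, which is a regular point of $\bm{\xi}_{P_i}$ for $\Haus{n-1}$ a.e.\ such $(a,u)$ by \ref{remark on regular points}. Since $P_i$ agrees with the $C^2$ manifold $M$ near $a$, the map $(b,w)\mapsto b + rw$ is a $C^1$ diffeomorphism of a neighbourhood of $(a,u)$ in $N(P_i)$ onto a neighbourhood of $x$ in $S(P_i,r)$, and along it $\bm{\xi}_{P_i}(b+rw) = b$, $\bm{\nu}_{P_i}(b+rw) = w$. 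Differentiating this, and using the description of $\Tan(N(P_i),(a,u))$ in \ref{normal bundle of smooth submanifolds}, one obtains $T_{P_i}(a,u) = \im \ap\Der\bm{\xi}_{P_i}(x) = \Tan(M,a)$ and, for $\tau_1 \in \Tan(M,a)$ and the vector $v_1 = \tau_1 + r\Der\nu(a)(\tau_1)$ (where $\nu$ is a unit normal vector field on $M$ with $\nu(a)=u$), that $\ap\Der\bm{\xi}_{P_i}(x)(v_1) = \tau_1$ and $\ap\Der\bm{\nu}_{P_i}(x)(v_1) = \Der\nu(a)(\tau_1)$; hence, by \ref{second fundamental form} and the symmetry of $\mathbf{b}_M(a)$,
\[
Q_{P_i}(a,u)(\tau,\tau_1) = \tau \bullet \Der\nu(a)(\tau_1) = -\,\mathbf{b}_M(a)(\tau,\tau_1)\bullet u \qquad \text{for } \tau,\tau_1 \in \Tan(M,a).
\]
(Alternatively this identity may be read off from \ref{comparison with Fu} together with the classical form of Fu's second fundamental form on $C^2$ submanifolds.)

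Finally, by the last Remark of Section~\ref{section: s.f.f.}, $Q_A(a,u) = Q_{P_i}(a,u)$ — including the equality $T_A(a,u) = T_{P_i}(a,u)$ of their domains — for $\Haus{n-1}$ a.e.\ $(a,u) \in N(A)\cap N(P_i)$, hence, by the second paragraph, for $\Haus{n-1}$ a.e.\ $(a,u) \in N(A)|(R \cap \ri(P_i))$; combining with the third paragraph and forming the countable union over $i$, which exhausts $N(A)|R$ because $R \subseteq \bigcup_i \ri(P_i)$, yields $Q_A(a,u) = -\mathbf{b}_M(a)\bullet u$ for $\Haus{n-1}$ a.e.\ $(a,u) \in N(A)|R$. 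I expect the main obstacle to be precisely the matching performed in the second and third paragraphs: ensuring that, for $\Haus{m}$ a.e.\ $a \in A\cap M$, every direction carrying the generalized normal bundle of $A$ at $a$ is genuinely normal to $M$ and is realized inside $N(P_i)$ (this is where density-$1$ points and the positive-reach identity $N(P_i,a) = \Nor(P_i,a)\cap\mathbf{S}^{n-1}$ are used), and verifying that the $C^1$ parametrization of the parallel set of $P_i$ by $N(P_i)$ behaves as expected at regular points, so that the abstract form $Q_{P_i}$ of \ref{second fundamental form} indeed reduces to $-\mathbf{b}_M\bullet u$.
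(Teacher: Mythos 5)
Your argument is correct, but it takes a genuinely different route from the paper's. The paper sets $R = A\cap M\cap\{a: N(A,a)\subseteq\Nor(M,a)\}$ (the same density argument you use gives $\Haus{m}((A\cap M)\sim R)=0$ and $\Nor(A,a)\subseteq\Nor(M,a)$ a.e.), observes that $N(A)|R$ is then contained in the unit normal bundle $N$ of $M$, which is an $(n-1)$ dimensional submanifold of class $1$ of $\Real{2n}$ by \ref{normal bundle of smooth submanifolds}, and applies the density theorem a second time, now in $\Real{2n}$, to obtain $\Tan^{n-1}(\Haus{n-1}\restrict N(A)|R,(a,u)) = \Tan(N,(a,u)) = \Tan^{n-1}(\psi,(a,u))$ for $\Haus{n-1}$ a.e.\ $(a,u)\in N(A)|R$; the conclusion is then read off from \ref{basic facts about s.f.f.}\eqref{basic facts about s.f.f. 2} together with \ref{remark on s.f.f for smooth submanifolds}. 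You instead localize $M$ into compact positive-reach pieces $P_{i}$, compute $Q_{P_{i}}=-\mathbf{b}_{M}\bullet u$ explicitly from Definition \ref{second fundamental form} via the smooth nearest point projection of $P_{i}$, and transfer the identity to $A$ by the locality remark at the end of Section \ref{section: s.f.f.}. Be aware that this locality remark is only asserted in the paper, and its verification is exactly the comparison of the approximate tangent planes of the measures carried by $N(A)$ and $N(P_{i})$ via \ref{countably rectifiable sets and Radon measures} and \ref{basic facts about s.f.f.}\eqref{basic facts about s.f.f. 2} that the paper carries out directly on the bundle $N$; so your route does not avoid the measure-theoretic step but repackages it, while adding two ingredients the paper does not need, namely the positive reach of compact $C^{2}$ chart images and the differentiation of $\bm{\psi}_{P_{i}}$ through the tubular-neighbourhood parametrization. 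What your version buys is a concrete check that the abstract form of \ref{second fundamental form} reduces to the classical second fundamental form on smooth pieces (in effect a proof of \ref{comparison with Fu} in the $C^{2}$ case), in place of the paper's shorter appeal to \ref{remark on s.f.f for smooth submanifolds}.
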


\begin{proof}
We define 
\begin{equation*}
 R = A \cap M \cap \{ a : N(A,a) \subseteq N(M,a)   \}.
\end{equation*}
Since $\Hdensity{m}{M \sim A}{a} =0$ for  $ \Haus{m} $ a.e.\ $ a \in A \cap M $ by \cite[2.10.19(4)]{MR0257325}, it follows from \cite[3.2.16]{MR0257325} that
	\begin{equation*}
	\Tan(M,a)= \Tan^{m}(\Haus{m}\restrict A \cap M,a) \subseteq \Tan(A,a)
	\end{equation*}
for $ \Haus{m} $ a.e.\ $ a \in A \cap M $. Henceforth, we deduce from \ref{Comparison with HLW04 and MS17} that $	N(A,a) \subseteq N(M,a) $ for $ \Haus{m} $ a.e.\ $ a \in A \cap M $ and
\begin{equation*}
	 \Haus{m}((A \cap M)\sim R) =0.
\end{equation*}
	
	We recall that $N(M)$ is an $n-1 $ dimensional submanifold of class 1 in $ \Real{n} \times \mathbf{S}^{n-1}$, see \ref{normal bundle of smooth submanifolds}. Since $N(A)|R \subseteq N(M)$, we may use \cite[2.10.19(4), 3.2.16]{MR0257325} to get $ \Hdensity{n-1}{N(M) \sim N(A)|R}{(a,u)} =0 $ and
	\begin{equation*}
	 \Tan(N(M), (a,u)) = \Tan^{n-1}(\Haus{n-1}\restrict N(A)|R, (a,u)) 
	\end{equation*}
	for $ \Haus{n-1} $ a.e.\ $ (a,u) \in N(A)|R $. Finally, if $ \psi $ is a Radon measure as in \ref{basic facts about s.f.f.}, we combine \ref{basic facts about s.f.f.}\eqref{basic facts about s.f.f. 1} and \ref{countably rectifiable sets and Radon measures} to deduce that 
	\begin{equation*}
	\Tan(N(M),(a,u)) = \Tan^{n-1}(\psi, (a,u)) \quad \textrm{for $ \Haus{n-1} $ a.e.\ $ (a,u) \in N(A)|R $}.
	\end{equation*}
Now we conclude employing \ref{basic facts about s.f.f.}\eqref{basic facts about s.f.f. 2} and \ref{remark on s.f.f for smooth submanifolds}.
\end{proof}

\begin{Theorem}\label{agreement with classical second fundamental form}
	Let $ A \subseteq \Real{n} $ be a closed set, $ 1 \leq m \leq n-1 $  and let $ S \subseteq A $ be $ \Haus{m} $ measurable and $ \rect{m} $ rectifiable of class $ 2 $. Then there exists $ R \subseteq S $ such that $ \Haus{m}(S \sim R) = 0 $ and\footnote{If $ f : V \rightarrow W $ is a linear map between vector spaces then $\bigodot_{2}f : V \times V \rightarrow W \times W $ is defined by $ \bigodot_{2}f(u,v) =(f(u), f(v)) $ for $ (u,v) \in V \times V $. Note that this notation does not agree with \cite[1.9.1]{MR0257325}} 
	\begin{equation*}
	\textstyle	\ap \Tan(S,a) = T_{A}(a,u) \quad \ap \Der^{2}S(a) \bullet u = - Q_{A}(a,u) \circ \bigodot_{2} T_{A}(a,u)_{\natural}
	\end{equation*}
	for $ \Haus{n-1} $ a.e.\ $ (a,u) \in N(A)|R $.
\end{Theorem}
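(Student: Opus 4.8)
The plan is to reduce to the single-manifold case settled in \ref{agreement with classical second fundamental form: lemma} by a countable decomposition of $S$, and then to transport the identities obtained on the pieces to $S$ itself by using that approximate second order differentiability of a set is a purely local notion. Since $S$ is $\rect{m}$ rectifiable of class $2$, I would first fix $m$ dimensional submanifolds $M_{1},M_{2},\ldots$ of class $2$ of $\Real{n}$ with $\Haus{m}(S\sim\bigcup_{i}M_{i})=0$, and set $S_{1}=S\cap M_{1}$ and $S_{i}=(S\cap M_{i})\sim\bigcup_{j<i}M_{j}$ for $i\geq2$. The sets $S_{i}$ are pairwise disjoint, $\Haus{m}$ measurable, $\Haus{m}(S_{i})\leq\Haus{m}(S)<\infty$, satisfy $S_{i}\subseteq M_{i}$, and $\Haus{m}(S\sim\bigcup_{i}S_{i})=0$. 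To each $i$ I would apply \ref{agreement with classical second fundamental form: lemma} with $M=M_{i}$, getting $R_{i}\subseteq A\cap M_{i}$ with $\Haus{m}((A\cap M_{i})\sim R_{i})=0$ and $Q_{A}(a,u)=-\mathbf{b}_{M_{i}}(a)\bullet u$ for $\Haus{n-1}$ a.e.\ $(a,u)\in N(A)|R_{i}$; the proof of \ref{agreement with classical second fundamental form: lemma} identifies $\Tan^{n-1}(\psi,(a,u))$ with $\Tan(N_{i},(a,u))$ for $N_{i}=\Nor(M_{i})\cap(M_{i}\times\mathbf{S}^{n-1})$ and uses \ref{basic facts about s.f.f.}\eqref{basic facts about s.f.f. 2} and \ref{normal bundle of smooth submanifolds}, so it also delivers $T_{A}(a,u)=\mathbf{p}[\Tan(N_{i},(a,u))]=\Tan(M_{i},a)$ for $\Haus{n-1}$ a.e.\ $(a,u)\in N(A)|R_{i}$. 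In parallel, since $S_{i}$ is $\Haus{m}$ measurable with $\Haus{m}(S_{i})<\infty$ and $S_{i}\subseteq M_{i}$, the Remark following \ref{ap differentials} gives $\ap\Tan(S_{i},a)=\Tan(M_{i},a)$ and $\ap\Der^{2}S_{i}(a)|\ap\Tan(S_{i},a)\times\ap\Tan(S_{i},a)=\mathbf{b}_{M_{i}}(a)$ for $\Haus{m}$ a.e.\ $a\in S_{i}$.

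The next step is to pass from $S_{i}$ to $S$. As $S\sim S_{i}$ is $\Haus{m}$ measurable with finite measure and is disjoint from $S_{i}$, \cite[2.10.19(4)]{MR0257325} gives $\Hdensity{m}{S \sim S_{i}}{a}=0$ for $\Haus{m}$ a.e.\ $a\in S_{i}$. At such a point, the plane and polynomial witnessing that $S_{i}$ is approximately differentiable of order $2$ also witness this for $S$: condition \eqref{ap diff for sets: dimension} in \ref{ap diff for sets} only improves upon enlarging the set, while the contribution of $S\sim S_{i}$ to the numerator in \eqref{ap diff for sets: approx by polynomials} is bounded by $\Haus{m}((S\sim S_{i})\cap\mathbf{B}(a,r))=o(r^{m})$; hence $\ap\Tan(S,a)=\ap\Tan(S_{i},a)$ and $\ap\Der^{2}S(a)=\ap\Der^{2}S_{i}(a)$ for $\Haus{m}$ a.e.\ $a\in S_{i}$. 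I then let $G_{i}$ be the set of $a\in S_{i}\cap R_{i}$ with $\ap\Tan(S,a)=\Tan(M_{i},a)$ and $\ap\Der^{2}S(a)|\ap\Tan(S,a)\times\ap\Tan(S,a)=\mathbf{b}_{M_{i}}(a)$, and put $R=\bigcup_{i}G_{i}$; then $\Haus{m}(S_{i}\sim G_{i})=0$, so $R\subseteq S$ with $\Haus{m}(S\sim R)=0$, and $N(A)|R=\bigcup_{i}N(A)|G_{i}$ is a countable union. For fixed $i$ and $\Haus{n-1}$ a.e.\ $(a,u)\in N(A)|G_{i}\subseteq N(A)|R_{i}$ one has $\ap\Tan(S,a)=\Tan(M_{i},a)=T_{A}(a,u)$, and, since $\ap\Der^{2}S(a)=\ap\Der^{2}S_{i}(a)$ factors through $\ap\Tan(S,a)_{\natural}$ with restriction $\mathbf{b}_{M_{i}}(a)$ to $\ap\Tan(S,a)\times\ap\Tan(S,a)$,
\begin{equation*}
	\textstyle\ap\Der^{2}S(a)\bullet u=(\mathbf{b}_{M_{i}}(a)\bullet u)\circ\bigodot_{2}\Tan(M_{i},a)_{\natural}=-Q_{A}(a,u)\circ\bigodot_{2}T_{A}(a,u)_{\natural};
\end{equation*}
taking the union over $i$ gives the assertion on $\Haus{n-1}$ almost all of $N(A)|R$.

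The hard part is the locality step in the second paragraph, that is, verifying that whether a set is approximately differentiable of order $2$ at a point, and the values of its approximate tangent plane and of its approximate differential of order $2$ there, are unaffected by modifying the set by a set of $\Haus{m}$ density zero at that point. This is a direct consequence of the two defining conditions of \ref{ap diff for sets} together with \cite[2.10.19(4)]{MR0257325}; the rest is bookkeeping built on \ref{agreement with classical second fundamental form: lemma}, the Remark after \ref{ap differentials}, \ref{normal bundle of smooth submanifolds}, and \ref{basic facts about s.f.f.}\eqref{basic facts about s.f.f. 2}.
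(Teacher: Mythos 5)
Your proposal is correct and follows essentially the same route as the paper: a countable cover of $S$ by $m$ dimensional submanifolds of class $2$, the identification $\ap\Tan(S,a)=\Tan(M_{i},a)$ and $\ap\Der^{2}S(a)=\mathbf{b}_{M_{i}}(a)\circ\bigodot_{2}\Tan(M_{i},a)_{\natural}$ at $\Haus{m}$ a.e.\ $a\in S\cap M_{i}$, and then an application of \ref{agreement with classical second fundamental form: lemma} on each piece. The only deviation is that the paper obtains this identification by citing \cite[2.10.19(4)]{MR0257325} and \cite[3.22]{2017arXiv170107286S}, whereas you re-derive it from the remark after \ref{ap differentials} together with your explicit density-zero locality check of the two conditions in \ref{ap diff for sets}; this is a harmless substitution, and your bookkeeping with the disjointified pieces $S_{i}$ and the sets $G_{i}$ is sound.
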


\begin{proof}
	If $ \{ M_{i}: i \geq 1 \} $ is a sequence of $ m $ dimensional submanifolds of class $ 2 $ covering $ \Haus{m} $ almost all of $ S $, we apply \cite[2.10.19(4)]{MR0257325} and \cite[3.22]{2017arXiv170107286S} to obtain
	\begin{equation*}
	\ap \Tan(S,a) = \Tan(M_{i},a), \quad	\ap \Der^{2}S(a) = \textstyle \mathbf{b}_{M_{i}}(a) \circ \bigodot_{2}\Tan(M_{i},a)_{\natural},
	\end{equation*}
	for $ \Haus{m} $ a.e.\ $ a \in M_{i} \cap S $ and for every $ i \geq 1 $. Now the conclusion easily follows applying \ref{agreement with classical second fundamental form: lemma}.
\end{proof}

The following lemma shows that the approximate differential of order $ 2 $ of a second order rectifiable closed set $ S \subseteq \Real{n} $ does not always fully describe its second fundamental form $ Q_{S} $. The same phenomenon arises in the theory of functions of bounded variation: the total differential is not always fully described by the approximate gradient. It seems to be not a coincidence that the following example considers exactly the primitive of a function of bounded variation whose total differential cannot be fully described by the approximate derivative. Recall that the boundary of a convex set of $ \Real{n} $ is always countably $ \rect{n-1} $ rectifiable of class $ 2 $.

\begin{Lemma}\label{Cantor function}
There exists a closed convex set $ A \subseteq \Real{2} $ and a subset $ T $ of the topological boundary of $ A $ such that \mbox{$ \Haus{1}(T)=0 $,} $ \Haus{1}(N(A)|T)>0 $ and
	\begin{equation*}
	T_{A}(a,u) = \{0\} \quad \textrm{for $ \Haus{1} $ a.e.\ $ (a,u) \in N(A)|T $.}
	\end{equation*}
\end{Lemma}
\begin{proof}
Let $ 0 < s < 1 $ and let $C \subseteq \Real{}$ be a compact set with $0 <\Haus{s}(C) < \infty$. Define
\begin{equation*}
f(x) = \Haus{s}(C \cap \{z : z \leq x \}) \quad \textrm{for $ x \in \Real{} $,}
\end{equation*}
and let $ g $ be a primitive of $ f $. Then $ g $ is a non-decreasing convex function of class $1$ on $ \Real{}$ and we define
\begin{equation*}
A = \Real{2} \cap \{ (x,y) : g(x) \leq y \}, \quad T = \{ (x,g(x)) : x \in C \}.
\end{equation*}
We notice that $A$ is a closed convex set, $ T \subseteq  A^{(1)} $, $\Haus{1}(T) = 0$ and
\begin{equation*}
N(A,(x,g(x))) = \{ (1+f(x)^{2})^{-1/2}(f(x),-1) \} \quad \textrm{whenever $ x \in \Real{} $.}
\end{equation*}
It follows that $\Haus{1}(\mathbf{q}(N(A))) > 0 $. Moreover, since $ f $ is constant on each connected component of $ \Real{} \sim C $, it follows that $ \mathbf{q}(N(A)|A \sim T) $ is a countable subset of $ \mathbf{S}^{1} $; in particular $  \Haus{1}(\mathbf{q}(N(A)|A \sim T)) =0 $. Therefore one easily infers that
\begin{equation*}
\Haus{1}(N(A)|T) > 0.
\end{equation*}
Finally we notice that $T_{A}(a,u) = \{0\} $ for $\Haus{1}$ a.e.\ $ (a,u) \in N(A)|T$ by \ref{Area formula Gauss map:remark}.
\end{proof}

\begin{Remark}
If $ M $ is an $ m $ dimensional submanifold of class $ 1 $ in $ \Real{n} $ that meets every $ m $ dimensional submanifold of class $ 2 $ in a set of $ \Haus{m} $ measure zero then it follows from \cite[4.12]{2017arXiv170309561M} that $ \Haus{m}(M^{(m)}) =0 $. Since $ M^{(i)} = \varnothing $ if $ i < m $ by \ref{Comparison with HLW04 and MS17}, it follows from  \ref{Area formula Gauss map:remark} and \ref{representation of support measures}\eqref{representation of support measures:2} that
\begin{equation*}
\dim T_{M}(a,u) \leq m-1 \quad \textrm{for $ \Haus{n-1} $ a.e.\ $(a,u) \in N(M) $.}
\end{equation*}
The existence of such $ M $ can be inferred from \cite{MR0427559}. 
\end{Remark}

\appendix

\section*{Appendix}

In this appendix we collect for the reader's convenience some remarks that are simple consequences of known facts.

\section{On approximate differentiability}\label{section: On approximate differentiability}

Basic facts on approximate differentiability for functions are collected in \cite[\S 2]{2017arXiv170107286S}. Here we point out some additional remarks.

\begin{Lemma}\label{local maximum of approximately differentiable functions}
	Suppose $ n \geq 1 $ is an integer, $ B \subseteq A \subseteq \Real{n}$, $ a \in A$ and \mbox{$ f: A \rightarrow \Real{} $} are such that $f$ is approximately differentiable at $ a $, $\supLdensity{n}{B}{a}=1$ and $ f(x) \leq f(a)$ for every $ x \in B$.
	
	Then $ \ap \Der f(a) =0 $.
\end{Lemma}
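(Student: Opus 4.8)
The plan is to prove this by contradiction: suppose $\ap\Der f(a) = L \neq 0$, pick a unit vector $e$ with $L(e) > 0$, and show that the set of points $x$ near $a$ with $f(x) > f(a)$ must have positive upper density at $a$ in a way that forces it to overlap $B$, contradicting $f(x) \leq f(a)$ on $B$.

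First I would set $L = \ap\Der f(a)$ and assume $L \neq 0$. Choose a unit vector $e$ such that $c := L(e) > 0$. The definition of approximate differentiability of order $1$ (\ref{approximate differentiability for functions}) says that
\begin{equation*}
\ap\lim_{x \to a} \frac{|f(x) - f(a) - L(x-a)|}{|x-a|} = 0,
\end{equation*}
so the set $E_\epsilon := \{x : |f(x) - f(a) - L(x-a)| \leq \epsilon|x-a|\}$ has $\Ldensity{n}{\Real{n} \sim E_\epsilon}{a} = 0$ for every $\epsilon > 0$. Now consider a thin cone $C$ around the ray $a + \Real{}_{>0}e$, say $C = \{x : (x-a)\bullet e \geq (1-\beta)|x-a|\}$ for small $\beta > 0$; for $x \in C$ one has $L(x-a) \geq (c - \|L\|\sqrt{2\beta})|x-a|$, which is $\geq (c/2)|x-a|$ once $\beta$ is small enough. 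Then for $x \in C \cap E_{c/4}$ we get $f(x) - f(a) \geq (c/2 - c/4)|x-a| = (c/4)|x-a| > 0$, so $x \notin B$.

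The key density bookkeeping is then: $C$ has positive upper density at $a$ (it is an open cone with vertex $a$, so $\supLdensity{n}{C}{a} > 0$, in fact $\Ldensity{n}{C}{a}$ exists and is a positive constant depending on $\beta$ and $n$). Since $\Ldensity{n}{\Real{n}\sim E_{c/4}}{a} = 0$, we get $\supLdensity{n}{C \cap E_{c/4}}{a} = \supLdensity{n}{C}{a} > 0$. But $C \cap E_{c/4}$ is disjoint from $B$ (every point there has $f > f(a)$), so $\supLdensity{n}{\Real{n} \sim B}{a} \geq \supLdensity{n}{C \cap E_{c/4}}{a} > 0$, whence $\supLdensity{n}{B}{a} < 1$, contradicting the hypothesis. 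Therefore $L = 0$.

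The main technical point to get right is the elementary estimate that $C \cap E_{c/4}$ retains positive upper density — i.e., that subtracting a density-zero set from a positive-density cone leaves positive upper density — together with the cone-geometry inequality bounding $L(x-a)$ from below on $C$. Both are routine, so I do not expect a genuine obstacle here; the only care needed is in choosing $\beta$ and $\epsilon$ in the right order (first $\beta$ small relative to $\|L\|$ and $c$, then $\epsilon = c/4$) and in invoking the subadditivity of upper density $\supLdensity{n}{\cdot}{a}$ correctly. One could equivalently phrase the argument using the approximate limit superior of $(f(x) - f(a))/|x-a|$ along the cone, but the contradiction via $\supLdensity{n}{B}{a} = 1$ is the cleanest route.
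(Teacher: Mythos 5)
Your cone construction is sound and this is essentially the paper's mechanism, but the final density inference as written has a genuine logical slip, and it occurs exactly at the step you flagged as "routine bookkeeping". From $\supLdensity{n}{\Real{n}\sim B}{a}>0$ you conclude $\supLdensity{n}{B}{a}<1$; that implication is false in general. Since the measures of $B$ and of its complement in a ball add up to the measure of the ball, positive \emph{upper} density of $\Real{n}\sim B$ only yields $\infLdensity{n}{B}{a}<1$, which is perfectly compatible with $\supLdensity{n}{B}{a}=1$ (the two sets can each be nearly full along different sequences of radii). What your argument needs is that the set $G=C\cap E_{c/4}$, which you correctly show is disjoint from $B$, has positive \emph{lower} density at $a$: then $\supLdensity{n}{B}{a}\leq 1-\infLdensity{n}{G}{a}<1$, and the contradiction is genuine. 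This repair is available from facts you already state: $C$ is an open cone with vertex $a$, so $\Ldensity{n}{C}{a}$ exists and is positive, and removing the density-zero set $\Real{n}\sim E_{c/4}$ does not decrease the lower density. So the proof is fixed by replacing upper with lower density in that last step, but as written the contradiction with the hypothesis $\supLdensity{n}{B}{a}=1$ is not established.

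For comparison, the paper runs the same cone idea in the opposite direction and thereby stays entirely with upper densities: it intersects $B$ (rather than its complement) with the cone $C$ on which $\ap\Der f(a)(x-a)\geq 2\epsilon|x-a|$, uses $\supLdensity{n}{B\sim C}{a}<1$ together with subadditivity of upper density to get $\supLdensity{n}{B\cap C}{a}>0$, and observes that every $x\in B\cap C$ satisfies $|f(x)-f(a)-\ap\Der f(a)(x-a)|\geq\epsilon|x-a|$, so the exceptional set of the approximate differential has positive upper density, contradicting approximate differentiability directly. That variant never needs a lower-density estimate, which is why it sidesteps the subtlety your version ran into.
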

\begin{proof}
	Assume $ a=0 $ and $ f(0)=0 $. If $ \ap\Der f(0)\neq 0 $ then there would be $ \epsilon >0 $ and a non empty open cone $ C $ such that $\ap\Der f(0)(x) \geq 2\epsilon |x| $ for every $ x \in C $. Therefore $ f(x)- \ap \Der f(0)(x)  \leq - 2\epsilon |x| $ for every $ x \in C \cap B $ and
	\begin{eqnarray*}
		&	\supLdensity{n}{B \sim C}{0}<1, \quad \supLdensity{n}{B\cap C}{0}>0, &\\
		& \supLdensity{n}{\Real{n}\sim \{x: | f(x)- \ap\Der f(0)(x) | \leq \epsilon |x|   \}}{0}>0. &
	\end{eqnarray*}
	This would be a contradiction.
\end{proof}

\begin{Remark}
	We observe that a similar argument proves that \textit{if $ f $ is approximately differentiable of order $ 2 $ at $ a $ then $ \ap \Der^{2} f(a) \leq 0 $.}
\end{Remark}

\begin{Lemma}\label{bi-Lip and inverse of ap Df}
	Suppose $n \geq 1 $ and $\nu \geq 1 $ are integers, $ B \subseteq A \subseteq \Real{n}$, $ a \in B$ and $ f: A \rightarrow \Real{\nu} $ are such that $f$ is approximately differentiable at $ a $, $f|B$ is a bi-Lipschitzian homeomorphism and $\Ldensity{n}{\Real{n} \sim B}{a}=0$. 
	
	Then $\ker \ap \Der f(a) = \{0\}$.
\end{Lemma}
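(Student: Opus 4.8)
The plan is to argue by contradiction. Suppose $\ker \ap\Der f(a) \neq \{0\}$ and fix a unit vector $w$ in it. The idea is that along directions close to $w$ the first-order Taylor approximation $P(x) = f(a) + \ap\Der f(a)(x-a)$ forces $|f(x)-f(a)|$ to be much smaller than $|x-a|$, which is incompatible with the lower Lipschitz bound coming from $f|B$ being a bi-Lipschitzian homeomorphism, provided enough such points lie in $B$; the density hypothesis $\Ldensity{n}{\Real{n}\sim B}{a}=0$ is exactly what guarantees this.

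First I would record the two quantitative ingredients. From $f|B$ being a bi-Lipschitzian homeomorphism and $a \in B$, there is $L \in (0,\infty)$ with $|f(x)-f(a)| \geq L|x-a|$ for all $x \in B$. From approximate differentiability of $f$ at $a$ (Definition \ref{approximate differentiability for functions} with $k = 1$), the sets $E_{\epsilon} = \{x : |f(x)-P(x)| \leq \epsilon|x-a|\}$ satisfy $\Ldensity{n}{\Real{n}\sim E_{\epsilon}}{a} = 0$ for every $\epsilon > 0$. Next, since $\ap\Der f(a)$ is linear and $\ap\Der f(a)(w) = 0$, any unit vector $u$ with $|u-w|$ small satisfies $|\ap\Der f(a)(u)| = |\ap\Der f(a)(u-w)| \leq \|\ap\Der f(a)\|\,|u-w|$; hence I would pick $\eta > 0$ with $\|\ap\Der f(a)\|\,\eta < L/4$, set $\epsilon = L/4$, and let $C$ be the open cone of all $x \neq a$ whose direction $(x-a)/|x-a|$ lies within $\eta$ of $w$. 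Then for every $x \in C \cap E_{\epsilon}$ one gets $|f(x)-f(a)| \leq |f(x)-P(x)| + |P(x)-f(a)| < (L/2)|x-a|$.

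Finally I would close with a density count. The cone $C$ has $\Ldensity{n}{C}{a} = \gamma$ for some $\gamma > 0$ equal to the normalized solid angle, while $\Real{n}\sim B$ and $\Real{n}\sim E_{\epsilon}$ have density $0$ at $a$; therefore $(B \cap E_{\epsilon} \cap C) \cap \mathbf{B}(a,r)$ has Lebesgue measure at least $\gamma\,\bm{\alpha}(n)\,r^{n} - o(r^{n})$, which is strictly positive for all small $r > 0$. Thus $B \cap E_{\epsilon} \cap C$ contains points $x \neq a$ arbitrarily close to $a$, and for any such $x$ the inequality $|f(x)-f(a)| < (L/2)|x-a|$ contradicts $|f(x)-f(a)| \geq L|x-a|$. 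The only point requiring a little care is this last density estimate, but it is entirely routine since a cone has a positive Lebesgue density at its apex; there is no genuine obstacle, the argument being just a confrontation of an upper and a lower bound on $f$ near $a$ restricted to a cone of positive density.
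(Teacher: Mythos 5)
Your proof is correct and follows essentially the same route as the paper: choose a cone of directions around the kernel vector on which $\ap\Der f(a)$ is small, intersect it with the set where the first-order approximation holds and with $B$, and use the density hypotheses to find points there, contradicting the lower Lipschitz bound for $f|B$. The only difference is cosmetic (the paper states the intersection is empty and contradicts the density estimates, while you exhibit a point and contradict the two bounds at it), so nothing further is needed.
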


\begin{proof}
	If $\Gamma = (1/2)(\Lip(f|B)^{-1})^{-1}$ then $ | f(y) - f(x)| \geq 2\Gamma |y-x|$ whenever $ y,x \in B $. If there was $ v \in \Real{n}\sim \{0\}$ such that $\ap \Der f(a)(v) =0 $, then there would exist a non empty open cone $C$ such that 
	\begin{equation*}
	|	\ap \Der f(a)(u)| \leq \Gamma |u| \quad \textrm{whenever $ u \in C$.}
	\end{equation*}
	Choosing $ 0 < \epsilon < \Gamma $ and letting $ D = \{ u + a : u \in C\}$ and 
	\begin{equation*}
	E = A \cap \{ x : |f(x) -f(a)- \ap \Der f(a)(x-a)| \leq \epsilon |x-a| \},
	\end{equation*}
	we would notice that $\Ldensity{n}{\Real{n}\sim E}{a} =0 $ and $ B \cap D \cap E = \varnothing $ and we would get a contradiction.
\end{proof}

\begin{Lemma}\label{composition and approx diff}
	If $ m,n,\nu $ are positive integers, $ D \subseteq \Real{m} $, $ U \subseteq \Real{n} $ is open, \mbox{$ f : D \rightarrow \Real{n} $,} $ g : U \rightarrow \Real{\nu} $, $ x \in D $, $ f(x) \in U $, $ f $ is approximately differentiable at $ x $ and $ g $ is differentiable at $ f(x) $, then $ g \circ f $ is approximately differentiable at $ x $ with
	\begin{equation*}
	\ap \Der(g \circ f)(x) = \Der g(f(x)) \circ \ap \Der f(x).
	\end{equation*}
\end{Lemma}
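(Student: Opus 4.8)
The plan is a direct $\epsilon$--$\delta$ argument that upgrades the classical chain-rule estimate by keeping track of the exceptional sets of density zero. Write $ b = f(x) $, $ L = \ap\Der f(x) $, $ M = \Der g(b) $, and let $ P(y) = g(b) + (M\circ L)(y-x) $, an affine map with $ P(x) = g(b) = g(f(x)) = (g\circ f)(x) $. By Definition \ref{approximate differentiability for functions} it then suffices to show $ \ap\lim_{y\to x}|y-x|^{-1}|g(f(y)) - P(y)| = 0 $ and to set $ \ap\Der(g\circ f)(x) = \Der P(x) = M\circ L $.

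First I would record two consequences of approximate differentiability of $ f $ at $ x $: for every $ \epsilon_{1} > 0 $ the set $ \{ y : |f(y) - f(x) - L(y-x)| \leq \epsilon_{1}|y-x| \} $ has density $ 1 $ at $ x $, and — since $ |f(y) - b| \leq \|L\|\,|y-x| + |f(y) - f(x) - L(y-x)| $ — the map $ f $ is approximately continuous at $ x $ with $ \ap\lim_{y\to x}f(y) = b $, so for every $ \delta > 0 $ the set $ \{ y : |f(y) - b| < \delta \} $ also has density $ 1 $ at $ x $. Next, given $ \epsilon > 0 $, I would fix the constants in the right order: choose $ \eta > 0 $ with $ \eta(1+\|L\|) \leq \epsilon/2 $; use differentiability of $ g $ at $ b $ together with openness of $ U $ to get $ \delta > 0 $ with $ \mathbf{U}(b,\delta) \subseteq U $ and $ |g(z) - g(b) - M(z-b)| \leq \eta|z-b| $ whenever $ |z-b| < \delta $; finally choose $ 0 < \epsilon_{1} \leq 1 $ with $ \|M\|\epsilon_{1} \leq \epsilon/2 $. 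Let $ G $ be the intersection of $ \dmn f $ with $ \{ y : |f(y)-b| < \delta \} $ and $ \{ y : |f(y) - f(x) - L(y-x)| \leq \epsilon_{1}|y-x| \} $; then $ \Ldensity{m}{\Real{m}\sim G}{x} = 0 $, and since $ \mathbf{U}(b,\delta) \subseteq U $ we have $ G \subseteq \dmn(g\circ f) $.

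The core estimate is the split $ |g(f(y)) - P(y)| \leq |g(f(y)) - g(b) - M(f(y)-b)| + \|M\|\,|f(y) - f(x) - L(y-x)| $ for $ y \in G $: there $ |f(y) - b| \leq (\|L\|+\epsilon_{1})|y-x| \leq (\|L\|+1)|y-x| $, so the first term is at most $ \eta(\|L\|+1)|y-x| \leq (\epsilon/2)|y-x| $ and the second at most $ \|M\|\epsilon_{1}|y-x| \leq (\epsilon/2)|y-x| $. Hence $ \{ y \neq x : |y-x|^{-1}|g(f(y)) - P(y)| > \epsilon \} \subseteq \Real{m}\sim G $, a set of density $ 0 $ at $ x $; as $ \epsilon $ was arbitrary, this yields the desired approximate limit.

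I do not expect a genuine obstacle, but the delicate point is that $ g $ is assumed differentiable only at the single point $ b $, with no Lipschitz control on a neighbourhood, so the linear estimate $ |g(z) - g(b) - M(z-b)| \leq \eta|z-b| $ holds only on $ \mathbf{U}(b,\delta) $ with $ \delta = \delta(\epsilon) $. This forces the good set $ G $ to be chosen depending on $ \epsilon $ and is exactly why approximate continuity of $ f $ at $ x $ must be invoked, to guarantee that $ f(y) $ is pinned near $ b $ on a set of density $ 1 $; the remainder is routine bookkeeping of density-zero sets.
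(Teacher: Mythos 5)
Your argument is correct: the affine candidate $P(y)=g(f(x))+(\Der g(f(x))\circ \ap\Der f(x))(y-x)$, the splitting of $|g(f(y))-P(y)|$ into the first-order error of $g$ at $f(x)$ plus $\|\Der g(f(x))\|$ times the first-order error of $f$ at $x$, and the density bookkeeping (intersecting the good set from the approximate differentiability of $f$ with the set where $f(y)$ lands in $\mathbf{U}(f(x),\delta)\subseteq U$, which has full density because $f$ is approximately continuous at $x$) together verify the definition of approximate limit, and you correctly isolate the only delicate point, namely that $g$ is differentiable only at the single point $f(x)$ so the estimate for $g$ is available only after pinning $f(y)$ near $f(x)$ on a set of density one. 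The paper's proof is a one-line citation: it combines \cite[2.8]{2017arXiv170107286S}, which converts approximate differentiability of $f$ at $x$ into genuine (relative) differentiability along a set of density one at $x$, with Federer's chain rule \cite[3.1.1(2)]{MR0257325} for a map differentiable relative to a set composed with a map differentiable at the image point. So the underlying mechanism is the same, but you reprove it from scratch: your version is self-contained and makes the $\epsilon$--$\delta$ and density-zero manipulations explicit, at the cost of redoing estimates that the cited results already encapsulate, while the paper's route is shorter and delegates exactly these two steps (full-density restriction, relative chain rule) to the references. No gap.
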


\begin{proof}
	Combine \cite[2.8]{2017arXiv170107286S} and \cite[3.1.1(2)]{MR0257325}.
\end{proof}

\begin{Lemma}\label{approx. versus pointwise diff. for Lip functions}
	If $ n, \nu \geq 1 $ are integers, $ D \subseteq \Real{n} $, $ z \in D $ and $ g : \Real{n} \rightarrow \Real{\nu} $ is a Lipschitzian function such that $ g|D $ is approximately differentiable at $ z $, then $ g $ is differentiable at $ z $ with $ \ap \Der (g|D)(z) = \Der g(z) $.
\end{Lemma}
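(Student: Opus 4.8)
The plan is to establish directly the pointwise first-order Taylor estimate for $g$ at $z$ with linear part $L := \ap\Der(g|D)(z)$, using the global Lipschitz bound on $g$ to promote the density-one information coming from the approximate differentiability of $g|D$ into a genuine everywhere estimate near $z$. First I would record what the hypothesis gives: by Definition~\ref{approximate differentiability for functions} there is a linear map $L : \Real{n} \to \Real{\nu}$ such that, writing $P(x) = g(z) + L(x-z)$, one has $\ap\lim_{x\to z}|x-z|^{-1}|g(x)-P(x)| = 0$ relative to the domain $D$ of $g|D$. Unravelling the definition of approximate limit, this is equivalent to $\Ldensity{n}{\Real{n}\sim D}{z} = 0$ together with the statement that for every $\epsilon>0$
\[
\Ldensity{n}{\Real{n} \sim D_\epsilon}{z} = 0, \qquad \text{where } D_\epsilon = D \cap \{ x : |g(x) - g(z) - L(x-z)| \leq \epsilon |x-z| \}.
\]

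Next I would fix $\epsilon > 0$ and put $M = \Lip(g) + \|L\|$, then choose $\delta > 0$ small enough that $M\delta + \tfrac{\epsilon}{2}(1+\delta) \leq \epsilon$ (possible since the left side tends to $\epsilon/2$ as $\delta \to 0$). Applying $\Ldensity{n}{\Real{n}\sim D_{\epsilon/2}}{z}=0$ at scale $(1+\delta)r$, there is $r_0 > 0$ such that
\[
\Leb{n}\big( \mathbf{B}(z, (1+\delta)r) \sim D_{\epsilon/2} \big) < \bm{\alpha}(n)(\delta r)^n = \Leb{n}(\mathbf{B}(x, \delta r))
\]
whenever $0 < r \leq r_0$ and $x \in \Real{n}$; since $\mathbf{B}(x,\delta r) \subseteq \mathbf{B}(z,(1+\delta)r)$ as soon as $|x-z| = r$, the ball $\mathbf{B}(x,\delta r)$ cannot be contained in $\mathbf{B}(z,(1+\delta)r)\sim D_{\epsilon/2}$, hence it meets $D_{\epsilon/2}$.

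Then, given $x$ with $0 < |x-z| =: r \leq r_0$, I would pick $y \in D_{\epsilon/2}$ with $|y-x| \leq \delta r$, so that $|y-z| \leq (1+\delta)r$, and estimate by the triangle inequality, the Lipschitz bound for $g$, the operator bound for $L$, and the membership $y \in D_{\epsilon/2}$:
\[
|g(x) - g(z) - L(x-z)| \leq \Lip(g)\,\delta r + \tfrac{\epsilon}{2}(1+\delta)r + \|L\|\,\delta r = \big(M\delta + \tfrac{\epsilon}{2}(1+\delta)\big)r \leq \epsilon\,|x-z|.
\]
As $\epsilon$ was arbitrary, this shows that $g$ is differentiable at $z$ with $\Der g(z) = L$, which is $\ap\Der(g|D)(z)$ by construction.

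I do not expect a genuine obstacle here: this is the standard device for passing from approximate to pointwise differentiability in the presence of a Lipschitz bound, and the only step meriting a little care is the elementary volume comparison in the second paragraph, which is precisely what converts the measure-theoretic hypothesis into the everywhere conclusion by ensuring that every small ball centred near $z$ captures a point of the density-one set $D_{\epsilon/2}$.
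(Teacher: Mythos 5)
Your argument is correct and complete: the passage from the density-one set $D_{\epsilon/2}$ to an everywhere estimate via the volume comparison (every ball $\mathbf{B}(x,\delta r)$ with $|x-z|=r$ must meet $D_{\epsilon/2}$ because its measure exceeds that of $\mathbf{B}(z,(1+\delta)r)\sim D_{\epsilon/2}$), combined with the Lipschitz bound on $g$ and the operator bound on $L$, is exactly the right mechanism, and the choice of $\delta$ with $M\delta+\tfrac{\epsilon}{2}(1+\delta)\leq\epsilon$ closes the estimate cleanly. The only difference from the paper is that the paper does not argue at all: it disposes of the lemma by citing Federer, \emph{Geometric measure theory}, 3.1.5, of which your proof is essentially the standard self-contained demonstration. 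So you lose nothing in rigor and gain a proof readable without the reference, at the cost of a paragraph where the paper spends one line; the content of the two routes is the same classical density-point argument.
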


\begin{proof}
	This is proved in \cite[3.1.5]{MR0257325}.
\end{proof}

\section{On the tangent cone of a measure}

The concept of approximate tangent vector to a measure is introduced in \cite[3.2.16]{MR0257325}. Besides the fundamental results given in \cite[3.2.16--3.2.22, 3.3.18]{MR0257325}, we point out here some useful consequences.

First, the following elementary inequality is useful here and elsewhere.
\begin{Lemma}\label{integral and bi-lip functions}
	If $X$ and $Y$ are metric spaces, $ m \geq 1 $ is an integer, \mbox{$ \theta(x) \geq 0 $} for $\Haus{m}$ a.e.\ $ x \in X$, $ 0 \leq \gamma < \infty $ and $ f : X \rightarrow Y $ is an univalent Lipschitzian map onto $Y$ such that $\gamma$ is a Lipschitz constant for $f^{-1}$, then
	\begin{equation*}
	\int^{\ast}_{X}\theta d\Haus{m} \leq \gamma^{m} \int_{Y}^{\ast}\theta \circ f^{-1}d\Haus{m}.
	\end{equation*}
\end{Lemma}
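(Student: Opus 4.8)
The plan is to reduce the statement to the elementary fact that a Lipschitzian map with constant $L$ increases $m$ dimensional Hausdorff measure by at most the factor $L^{m}$ — which is immediate from the definition of $\Haus{m}$, since such a map sends a set of diameter $d$ onto a set of diameter at most $Ld$ — applied to the inverse map $f^{-1}$, and then to integrate the resulting inequality between the measures of level sets by means of the Cavalieri (layer cake) representation of the upper integral.

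First I would note that, since $f$ is univalent and onto, $f^{-1} : Y \to X$ is a genuine map and, for every $t > 0$,
\begin{equation*}
\{ y : \theta(f^{-1}(y)) > t \} = f[\{ x : \theta(x) > t \}];
\end{equation*}
moreover $\theta \circ f^{-1}$ is defined $\Haus{m}$ almost everywhere on $Y$, because if $N \subseteq X$ is the $\Haus{m}$ null set off which $\theta$ is defined, then $\theta \circ f^{-1}$ is defined off $f[N]$ and $\Haus{m}(f[N]) \leq (\Lip f)^{m}\Haus{m}(N) = 0$. Applying the scaling estimate to $f^{-1}$, which admits $\gamma$ as a Lipschitz constant, gives $\Haus{m}(f^{-1}[B]) \leq \gamma^{m}\Haus{m}(B)$ for every $B \subseteq X$; taking $B = f[\{ \theta > t \}]$ and using that $f$ is a bijection (so $f^{-1}[f[E]] = E$), this yields
\begin{equation*}
\Haus{m}\{ x : \theta(x) > t \} \leq \gamma^{m}\,\Haus{m}\{ y : \theta(f^{-1}(y)) > t \} \qquad \textrm{for all } t > 0.
\end{equation*}

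Finally I would integrate this inequality over $t \in (0, \infty)$ and use $\int^{*}_{X}\theta\,d\Haus{m} = \int_{0}^{\infty}\Haus{m}\{ x : \theta(x) > t \}\,dt$, together with its analogue on $Y$ for $\theta \circ f^{-1}$, to conclude
\begin{equation*}
\int^{*}_{X}\theta\,d\Haus{m} \leq \gamma^{m}\int^{*}_{Y}\theta \circ f^{-1}\,d\Haus{m}.
\end{equation*}
The argument is essentially bookkeeping; the only point requiring a little care is the legitimacy of the Cavalieri formula for the upper integral of the nonnegative, possibly non $\Haus{m}$ measurable integrand $\theta$, which rests on the fact that $\Haus{m}$ is defined on all subsets and is Borel regular, cf.\ \cite[2.4.2]{MR0257325}; alternatively, one runs the same computation with $\Haus{m}$ measurable majorants of $\theta$ transported by the continuous map $f^{-1}$.
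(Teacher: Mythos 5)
Your argument is correct, and it rests on the same two pillars as the paper's proof: the scaling inequality $\Haus{m}(f^{-1}[B])\leq\gamma^{m}\Haus{m}(B)$ for the Lipschitzian inverse, and the definitional approximation of the upper integral from \cite[2.4.2]{MR0257325}. The packaging, however, is different. The paper's (terse) proof approximates $\theta\circ f^{-1}$ from above by upper functions on $Y$, pulls the majorant back to $X$ by composition with $f$, and estimates set by set, which only ever uses $\int^{*}_{X}\chi_{f^{-1}[B]}\,d\Haus{m}\leq\Haus{m}(f^{-1}[B])$. You instead push the level sets of $\theta$ forward under $f$ and compare distribution functions, which forces you to invoke the Cavalieri identity for the upper integral of a possibly non $\Haus{m}$ measurable $\theta$; the delicate half of that identity is $\int^{*}_{X}\theta\,d\Haus{m}\leq\int_{0}^{\infty}\Haus{m}\{x:\theta(x)>t\}\,dt$, and your appeal to Borel regularity is indeed what saves it: choosing nested Borel hulls $E_{k}\supseteq\{\theta>\lambda^{k}\}$ with $\Haus{m}(E_{k})=\Haus{m}\{\theta>\lambda^{k}\}$ and testing with the measurable majorant $\sum_{k}\lambda^{k+1}\chi_{E_{k}\setminus E_{k+1}}$ gives the inequality up to a factor $\lambda>1$, which one then lets tend to $1$. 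So your route is sound but purchases the result at the price of this extra (true, if not quite quotable) lemma, which the paper's direct approximation by upper functions on the $Y$ side avoids; conversely, your version isolates the geometric content, namely the level-set inequality $\Haus{m}\{\theta>t\}\leq\gamma^{m}\Haus{m}\{\theta\circ f^{-1}>t\}$, very cleanly. (Minor slip: the scaling estimate should be stated for every $B\subseteq Y$, not $B\subseteq X$.)
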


\begin{proof}
	We assume $\int_{Y}^{\ast}\theta \circ f^{-1}d\Haus{m}< \infty$. Then the conclusion easily follows from the definition of upper integral in \cite[2.4.2]{MR0257325}, using approximation by upper functions.
\end{proof}

\begin{Lemma}\label{approx tangent cone and bilipschitz maps}
	Suppose $X$ and $Y$ are normed vector spaces, $ P \subseteq X$, $ m \geq 1 $ is an integer, $ \theta(x) \geq 0 $ for $\Haus{m}$ a.e.\ $ x \in P$, $ a \in P$ and $ f : X \rightarrow Y $ is a function differentiable at $a$ such that $ f|P$ is a bi-Lipschitzian homeomorphism. Additionally, we define the measures
	\begin{equation*}
	\psi =  \theta \Haus{m}\restrict P, \quad \mu = (\theta \circ (f|P)^{-1} )\Haus{m} \restrict f(P).
	\end{equation*}

	Then $\Der f(a)[\Tan^{m}(\psi,a)] \subseteq \Tan^{m}(\mu, f(a))$.
\end{Lemma}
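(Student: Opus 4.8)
The plan is to reduce the claim to a statement about approximate tangent vectors of measures under Lipschitz maps, for which a direct density argument works. Fix $v \in \Tan^{m}(\psi, a)$; I must show $\Der f(a)(v) \in \Tan^{m}(\mu, f(a))$. By the definition of $\Tan^{m}(\psi, a)$ in \cite[3.2.16]{MR0257325}, either $v = 0$ (in which case the conclusion is trivial since $0 \in \Tan^{m}(\mu,f(a))$), or for every $\varepsilon > 0$ the set of points $x \in P$ lying inside the cone $\{x : |x - a - r v| \leq \varepsilon r \text{ for some } r > 0\}$ has positive upper $m$-density with respect to $\psi$ at $a$; more precisely,
\begin{equation*}
\limsup_{r \to 0^{+}} \frac{\psi\big(\mathbf{B}(a,r) \cap \{x : \dist{x - a}{\Real{}\,v} \leq \varepsilon |x-a|\}\big)}{r^{m}} > 0 .
\end{equation*}

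First I would record the differentiability of $f$ at $a$ in the form: for every $\eta > 0$ there is $\delta > 0$ with $|f(x) - f(a) - \Der f(a)(x-a)| \leq \eta |x - a|$ whenever $|x-a| \leq \delta$. Hence the image under $f$ of the cone around $\Real{}\,v$ at $a$ is, up to an error controlled by $\eta$ and by $\|\Der f(a)\|$, contained in a cone around $\Real{}\,\Der f(a)(v)$ at $f(a)$; a short computation shows that given $\varepsilon' > 0$ one can choose $\varepsilon$ and $\eta$ small so that
\begin{equation*}
f\big(P \cap \mathbf{B}(a,r) \cap \{x : \dist{x-a}{\Real{}\,v} \leq \varepsilon|x-a|\}\big) \subseteq \mathbf{B}(f(a), C r) \cap \{y : \dist{y - f(a)}{\Real{}\,\Der f(a)(v)} \leq \varepsilon' |y - f(a)|\}
\end{equation*}
for small $r$, where $C$ depends only on $\|\Der f(a)\|$. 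Then I would transfer the density lower bound from $\psi$ to $\mu$: since $f|P$ is bi-Lipschitzian and $\mu = (\theta \circ (f|P)^{-1})\,\Haus{m}\restrict f(P)$ is the pushforward of $\psi$ under $f|P$ in the sense of \ref{integral and bi-lip functions} (applied with $f^{-1}$ as the univalent Lipschitz map), the $\mu$-measure of the image set is bounded below by $\gamma^{-m}$ times the $\psi$-measure of the original set, where $\gamma = \Lip(f|P)$. Combining this with the cone inclusion and the positive upper $\psi$-density gives a positive upper $\mu$-density at $f(a)$ for every cone around $\Real{}\,\Der f(a)(v)$, which is exactly the condition $\Der f(a)(v) \in \Tan^{m}(\mu, f(a))$.

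The main obstacle is bookkeeping rather than conceptual: one must keep track of how the two small parameters (the cone aperture $\varepsilon$ and the differentiability error $\eta$) interact, and one must be careful that the radii match up — a ball of radius $r$ around $a$ maps into a ball of radius roughly $\gamma r$ around $f(a)$, and conversely, so the $\limsup$ over $r \to 0$ is preserved after rescaling. The one genuinely nontrivial input is \ref{integral and bi-lip functions} together with the fact that bi-Lipschitz maps send $\Haus{m}$-measurable sets to $\Haus{m}$-measurable sets with comparable measure, which is what legitimizes comparing $\psi$-mass and $\mu$-mass of corresponding sets; everything else is the definition of the tangent cone of a measure unwound carefully. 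It is worth noting the inclusion in the conclusion need not be an equality in general, since $\Der f(a)$ may be singular even though $f|P$ is bi-Lipschitzian — this is precisely why the statement is phrased as $\subseteq$.
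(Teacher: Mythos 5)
Your strategy is viable and, at bottom, it is the paper's: the one substantive ingredient is Lemma \ref{integral and bi-lip functions}, used to compare the $\psi$-mass of a set $E \subseteq P$ with the $\mu$-mass of $f[E]$, combined with the differentiability of $f$ at $a$ to control the geometry. The paper packages the geometric half differently: it shows $\bm{\Theta}^{m}(\psi \restrict X \sim f^{-1}[T],a)=0$ whenever $\bm{\Theta}^{m}(\mu \restrict Y \sim T,f(a))=0$ and then quotes \cite[3.1.21, 3.2.16]{MR0257325}, which disposes of all the cone bookkeeping that you carry out by hand. Your unwound version can be made to work, but as written it has three concrete defects.

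First, the direction of Lemma \ref{integral and bi-lip functions}: you need the lower bound $\mu(f[E]) \geq \gamma^{-m}\psi(E)$ for $E \subseteq P$, which comes from applying the lemma to the map $f|E$ with $\gamma$ a Lipschitz constant of $(f|P)^{-1}$; applying it ``with $f^{-1}$ as the univalent Lipschitz map'' and $\gamma = \Lip(f|P)$, as you state, yields the opposite inequality $\mu(f[E]) \leq \Lip(f|P)^{m}\psi(E)$, which is useless here (this is also exactly how the paper invokes the lemma, with a common Lipschitz constant for $f|P$ and $(f|P)^{-1}$). Second, your ``more precisely'' reformulation and your displayed image inclusion use the two-sided cone $\{x : \dist{x-a}{\Real{} v} \leq \varepsilon|x-a|\}$; positive upper $\mu$-density in every two-sided cone around $\Real{}\Der f(a)(v)$ is strictly weaker than $\Der f(a)(v) \in \Tan^{m}(\mu,f(a))$ (consider $\mu$ concentrated on the ray opposite to $\Der f(a)(v)$), so your closing ``which is exactly the condition'' is false for that formulation. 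You must keep the one-sided cones $\{x : |x-a-rv| \leq \varepsilon r \ \textrm{for some}\ r>0\}$ of your first display throughout; the same differentiability estimate does map such a cone around $v$ into an arbitrarily thin one-sided cone around $\Der f(a)(v)$, with comparable radii. Third, the degenerate cases: when $v=0$, and when $v \neq 0$ but $\Der f(a)(v)=0$ (possible, as you yourself note, since $\Der f(a)$ may be singular), the cone inclusion is vacuous and ``$0 \in \Tan^{m}(\mu,f(a))$'' is not automatic — it requires $\bm{\Theta}^{*m}(\mu,f(a))>0$, which however follows from the same mass comparison applied to plain balls, $\psi(\mathbf{B}(a,r)) \leq \gamma^{m}\mu(\mathbf{B}(f(a),\gamma r))$. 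With these repairs your argument closes; alternatively, the paper's route through preimages of full-density sets and \cite[3.1.21]{MR0257325} avoids the cone estimates entirely.
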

\begin{proof}
	Firstly we prove that $ \bm{\Theta}^{m}(\psi \restrict X \sim f^{-1}[T], a) =0 $, whenever $ T \subseteq Y$ such that $\bm{\Theta}^{m}( \mu \restrict Y \sim T,f(a)) =0 $. In fact, for such a subset $T$, if $S = f^{-1}[T]$, $\gamma$ is a Lipschitz constant for $f|P$ and$(f|P)^{-1}$ and $ r > 0 $, we observe that
	\begin{equation*}
	f[(P \sim S) \cap \mathbf{B}(a,r)] \subseteq (f[P] \sim T) \cap \mathbf{B}(f(a), \gamma r),
	\end{equation*}
	and we employ \ref{integral and bi-lip functions} to get that $\psi(\mathbf{B}(a,r) \sim S)\leq \gamma^{m}\mu(\mathbf{B}(f(a), \gamma r) \sim T) $. Therefore $\Der f(a)[\Tan^{m}(\psi,a)] \subseteq \Tan^{m}(\mu, f(a))$ by \cite[3.1.21, p.\ 234]{MR0257325} and \cite[3.2.16, p.\ 252]{MR0257325}.
\end{proof}

\begin{Remark}\label{approx tangent cone and bilipschitz maps:remark}
	If $ \theta $ is the characteristic function of $P$ then, by \cite[2.4.5]{MR0257325}, we have that $\psi = \Haus{m}\restrict P$ and $\mu =\Haus{m}\restrict f[P]$.
\end{Remark}

\begin{Lemma}\label{countably rectifiable sets and Radon measures}
	Suppose $ 1 \leq k \leq \nu $ are integers, $ E \subseteq \Real{\nu} $ is countably $\rect{k}$ rectifiable and $\Haus{k}$ measurable and $\theta$ is a $\Haus{k}\restrict E$ measurable $ \Haus{k}\restrict E $ almost positive function such that
\begin{equation*}
	\psi = \theta \Haus{k} \restrict E 
	\end{equation*}
is a Radon measure over $ \mathbf{R}^{\nu} $. 

	Then $\Tan^{k}(\psi,z)$ is a $k$ dimensional plane contained in $\Tan^{k}(\Haus{k} \restrict E,z)$ for $\Haus{k}$ a.e.\ $ z \in E$ and 
	\begin{equation*}
	\Tan^{k}(\Haus{k}\restrict F,z) \subseteq \Tan^{k}(\psi, z)  \quad \textrm{for $\Haus{k}$ a.e.\ $ z \in F$,}
	\end{equation*}
	whenever $ F \subseteq E $ is $\Haus{k}$ measurable such that $\Haus{k}(F) < \infty $. 
\end{Lemma}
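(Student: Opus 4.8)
The plan is to reduce the statement to the behaviour of $\Haus{k}$ on the $C^{1}$ pieces of $E$ and to invoke Federer's theory of approximate tangent cones of Radon measures \cite[3.2.16--3.2.22, 3.3.18]{MR0257325}. The one real complication is that the hypotheses bound $\theta$ from below but not from above, so $\psi$ cannot simply be dominated by a constant multiple of $\Haus{k}\restrict E$.

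First I would choose, using that $E$ is countably $\rect{k}$ rectifiable, a countable family $M_{1}, M_{2}, \ldots$ of $k$ dimensional submanifolds of class $1$ of $\Real{\nu}$, and for each index $i$, each positive rational $c$ and each positive integer $j$ set $S_{i,c,j} = M_{i} \cap E \cap \{ x : \theta(x) \geq c \} \cap \mathbf{B}(0,j)$. Since $\{M_{i}\}$ can be taken to cover $E$ up to an $\Haus{k}$ null set, since $\theta > 0$ $\Haus{k}\restrict E$ almost everywhere, and since $E$ is $\Haus{k}$ measurable, the countably many $\Haus{k}$ measurable sets $S_{i,c,j}$ cover $\Haus{k}$ almost all of $E$. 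Since $\psi$ is Radon and $\psi\restrict S_{i,c,j} \geq c\,\Haus{k}\restrict S_{i,c,j}$, each measure $\Haus{k}\restrict S_{i,c,j}$ is finite and mutually absolutely continuous with $\psi$ on $S_{i,c,j}$; since moreover $S_{i,c,j} \subseteq M_{i}$, \cite[3.2.16--3.2.22]{MR0257325} gives $\Tan^{k}(\Haus{k}\restrict S_{i,c,j}, z) = \Tan(M_{i}, z)$ for $\Haus{k}$ almost every $z \in S_{i,c,j}$.

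The key step is to show $\Tan^{k}(\psi, z) = \Tan(M_{i}, z)$ for $\Haus{k}$ almost every $z \in S_{i,c,j}$. Fix such a $z$; using the differentiation theory for the Radon measure $\psi$ (applied against $\Haus{k}\restrict S_{i,c,j}$, together with the elementary estimate $\Haus{k}(S_{i,c,j}\cap\mathbf{B}(z,r)) = O(r^{k})$ coming from $M_{i}$ being a $C^{1}$ submanifold) I may assume $\bm{\Theta}^{\ast k}(\psi, z) < \infty$ and $\bm{\Theta}^{\ast k}(\psi\restrict(\Real{\nu}\sim S_{i,c,j}), z) = 0$. The inclusion $\Tan(M_{i}, z) \subseteq \Tan^{k}(\psi, z)$ follows from $\psi \geq c\,\Haus{k}\restrict S_{i,c,j}$, since $\Tan^{k}(\mu_{1}, z) \subseteq \Tan^{k}(\mu_{2}, z)$ whenever $\mu_{1} \leq \mu_{2}$. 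For the reverse inclusion, let $v$ be a unit vector not in $\Tan(M_{i}, z)$ and let $X$ be a closed cone with vertex $z$, containing $v$ in its interior, whose directions stay at positive distance from $\Tan(M_{i}, z)$. Because $M_{i}$ has tangent plane $\Tan(M_{i}, z)$ at $z$, there is $r_{0} > 0$ with $M_{i} \cap X \cap \mathbf{B}(z, r) \subseteq \{z\}$ for $0 < r < r_{0}$; hence $\psi(X \cap \mathbf{B}(z, r) \cap S_{i,c,j}) = 0$ for such $r$, while $\psi(X \cap \mathbf{B}(z, r) \sim S_{i,c,j}) \leq \psi(\mathbf{B}(z, r) \sim S_{i,c,j}) = o(r^{k})$ by the two density facts above. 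Therefore $\psi(X \cap \mathbf{B}(z, r)) = o(r^{k})$, so $v \notin \Tan^{k}(\psi, z)$. This proves the key step; consequently $\Tan^{k}(\psi, z)$ is a $k$ dimensional plane for $\Haus{k}$ almost every $z \in E$, and it is contained in $\Tan^{k}(\Haus{k}\restrict E, z)$ since $\Haus{k}\restrict E \geq \Haus{k}\restrict S_{i,c,j}$ and $\Tan^{k}(\psi, z) = \Tan^{k}(\Haus{k}\restrict S_{i,c,j}, z)$.

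Finally, for the last assertion let $F \subseteq E$ be $\Haus{k}$ measurable with $\Haus{k}(F) < \infty$. Then $\Haus{k}\restrict F$ is a finite Borel measure, and repeating the key step with $\psi$ replaced by $\Haus{k}\restrict F = \mathbf{1}_{F}\,\Haus{k}\restrict E$ --- using the same family $\{M_{i}\}$, and now with a bounded density so that no special care about large values is needed --- gives $\Tan^{k}(\Haus{k}\restrict F, z) = \Tan(M_{i}, z) = \Tan^{k}(\psi, z)$ for $\Haus{k}$ almost every $z \in F$, which in particular yields $\Tan^{k}(\Haus{k}\restrict F, z) \subseteq \Tan^{k}(\psi, z)$. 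I expect the main obstacle to be the reverse inclusion in the key step: the absence of an upper bound on $\theta$ forces the split of $\psi$ near $z$ into the part carried by the smooth piece $S_{i,c,j}$ (which vanishes on the cone $X$ for purely geometric reasons) and the remainder (whose $\Haus{k}$ density at $z$ vanishes, a consequence of $\psi$ being Radon).
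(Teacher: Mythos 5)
Your proof is correct, but it follows a genuinely more elementary route than the paper's. The paper gets the first assertion almost for free from Federer's machinery: since $\theta$ is almost positive, $\psi$ and $\Haus{k}\restrict E$ are mutually absolutely continuous, so $\Real{\nu}$ is $(\psi,k)$ rectifiable; \cite[2.4.10, 2.10.19(3)]{MR0257325} give $\bm{\Theta}^{\ast k}(\psi,z)<\infty$ $\psi$ almost everywhere, and then the structure theorem \cite[3.3.18]{MR0257325} immediately yields that $\Tan^{k}(\psi,z)\in\mathbf{G}(\nu,k)$ almost everywhere; the containment in $\Tan^{k}(\Haus{k}\restrict E,z)$ is \cite[3.2.19]{MR0257325} after covering $E$ by rectifiable pieces via \cite[3.2.14]{MR0257325}. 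You instead re-derive the relevant special case of 3.3.18 by hand: decomposing into pieces $S_{i,c,j}$ of a $C^{1}$ submanifold on which $\theta$ is bounded below, using Besicovitch differentiation of $\psi$ against $\Haus{k}\restrict S_{i,c,j}$ to kill the density of the complementary part, and running a cone argument for the reverse inclusion. This is longer but self-contained, and your observation that $\psi\geq c\,\Haus{k}\restrict S_{i,c,j}$ forces $\Haus{k}(S_{i,c,j})<\infty$ neatly sidesteps the missing upper bound on $\theta$. (Two small points: the $O(r^{k})$ bound and the cone containment $M_{i}\cap X\cap\mathbf{B}(z,r)\subseteq\{z\}$ rely on submanifolds being locally closed, which holds for Federer's definition; and for the final assertion the paper's truncation $F_{i}=F\cap\{z:\theta(z)\geq i^{-1}\}$ combined with $\Haus{k}\restrict F_{i}\leq i\,\psi$ and monotonicity of $\Tan^{k}$ gives the inclusion more directly than re-running your key step for $\Haus{k}\restrict F$, though your version also works.)
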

\begin{proof}
	Firstly we observe that $\psi(S) = 0 $ if and only if $\Haus{k}(S) =0$. Therefore $\Real{\nu}$ is $(\psi,k)$ rectifiable and, employing \cite[2.4.10, 2.10.19(3)]{MR0257325},
	\begin{equation*}
	\bm{\Theta}^{\ast k}(\psi,z)< \infty \quad \textrm{for $\psi$ a.e.\ $ z \in \Real{\nu} $.}
	\end{equation*}
	We apply \cite[3.3.18]{MR0257325} to conclude that $\Tan^{k}(\psi,z) \in \mathbf{G}(n,k)$ for $\Haus{k}$ a.e.\ $ z \in E$. If $ F \subseteq E $ is $\Haus{k}$ measurable and $\Haus{k}(F) < \infty $, we define
	\begin{equation*}
	F_{i} = F \cap \{ z : \theta(z)\geq i^{-1}  \} \quad \textrm{for every integer $ i \geq 1 $,}
	\end{equation*}
	we observe that $\Tan^{k}(\Haus{k}\restrict F,z)= \Tan^{k}(\Haus{k}\restrict F_{i},z)$ for $\Haus{k}$ a.e.\ $ z \in F_{i}$ by \cite[2.10.19(4)]{MR0257325}, and we use \cite[3.2.16]{MR0257325} to conclude 
	\begin{equation*}
	\Tan^{k}(\Haus{k}\restrict F, z) \subseteq \Tan^{k}(\psi, z) \quad \textrm{for $\Haus{k}$ a.e.\ $ z \in F $.}
	\end{equation*}
	Since by \cite[3.2.14]{MR0257325} the set $E$ can be $\Haus{k}$ almost covered by countably many $\Haus{k}$ measurable $k$ rectifiable subsets of $\Real{\nu}$, we may apply \cite[3.2.19]{MR0257325} to conclude that $\Tan^{k}(\psi, z) \subseteq \Tan^{k}(\Haus{k}\restrict E, z)$ for $\Haus{k}$ a.e.\ $ z \in E$.
\end{proof}


\begin{thebibliography}{{San}17}
	
	\bibitem[Asp73]{MR0310150}
	Edgar Asplund.
	\newblock Differentiability of the metric projection in finite-dimensional
	{E}uclidean space.
	\newblock {\em Proc. Amer. Math. Soc.}, 38:218--219, 1973.
	
	\bibitem[Buc92]{MR1100645}
	Zolt{\'a}n Buczolich.
	\newblock Density points and bi-{L}ipschitz functions in {${\bf R}^m$}.
	\newblock {\em Proc. Amer. Math. Soc.}, 116(1):53--59, 1992.
	
	\bibitem[CH00]{MR1742247}
	Andrea Colesanti and Daniel Hug.
	\newblock Hessian measures of semi-convex functions and applications to support
	measures of convex bodies.
	\newblock {\em Manuscripta Math.}, 101(2):209--238, 2000.
	
	\bibitem[Fed59]{MR0110078}
	Herbert Federer.
	\newblock Curvature measures.
	\newblock {\em Trans. Amer. Math. Soc.}, 93:418--491, 1959.
	
	\bibitem[Fed69]{MR0257325}
	Herbert Federer.
	\newblock {\em Geometric measure theory}.
	\newblock Die Grundlehren der mathematischen Wissenschaften, Band 153.
	Springer-Verlag New York Inc., New York, 1969.
	
	\bibitem[Fed78]{MR0467473}
	Herbert Federer.
	\newblock Colloquium lectures on geometric measure theory.
	\newblock {\em Bull. Amer. Math. Soc.}, 84(3):291--338, 1978.
	
	\bibitem[Fer76]{MR0413112}
	Steve Ferry.
	\newblock When {$\epsilon $}-boundaries are manifolds.
	\newblock {\em Fund. Math.}, 90(3):199--210, 1975/76.
	
	\bibitem[Fu85]{MR816398}
	Joseph Howland~Guthrie Fu.
	\newblock Tubular neighborhoods in {E}uclidean spaces.
	\newblock {\em Duke Math. J.}, 52(4):1025--1046, 1985.
	
	\bibitem[Fu89]{MR1021369}
	Joseph H.~G. Fu.
	\newblock Curvature measures and generalized {M}orse theory.
	\newblock {\em J. Differential Geom.}, 30(3):619--642, 1989.
	
	\bibitem[GP72]{MR0287442}
	Ronald Gariepy and W.~D. Pepe.
	\newblock On the level sets of a distance function in a {M}inkowski space.
	\newblock {\em Proc. Amer. Math. Soc.}, 31:255--259, 1972.
	
	\bibitem[HLW04]{MR2031455}
	Daniel Hug, G{\"u}nter Last, and Wolfgang Weil.
	\newblock A local {S}teiner-type formula for general closed sets and
	applications.
	\newblock {\em Math. Z.}, 246(1-2):237--272, 2004.
	
	\bibitem[Hug98]{MR1652084}
	Daniel Hug.
	\newblock Generalized curvature measures and singularities of sets with
	positive reach.
	\newblock {\em Forum Math.}, 10(6):699--728, 1998.
	
	\bibitem[Koh77]{MR0427559}
	Robert~V. Kohn.
	\newblock An example concerning approximate differentiation.
	\newblock {\em Indiana Univ. Math. J.}, 26(2):393--397, 1977.
	
	\bibitem[KW14]{MR3153586}
	E.~V. Khmaladze and W.~Weil.
	\newblock Differentiation of sets---the general case.
	\newblock {\em J. Math. Anal. Appl.}, 413(1):291--310, 2014.
	
	\bibitem[Las06]{MR2307063}
	G\"{u}nter Last.
	\newblock On mean curvature functions of {B}rownian paths.
	\newblock {\em Stochastic Process. Appl.}, 116(12):1876--1891, 2006.
	
	\bibitem[MS17]{2017arXiv170309561M}
	U.~{Menne} and M.~{Santilli}.
	\newblock {A geometric second-order-rectifiable stratification for closed
		subsets of Euclidean space}.
	\newblock {\em ArXiv e-prints}, March 2017. \newblock{To appear in  \em Ann. Sc. Norm. Super. Pisa Cl. Sci. (5).}
	
	\bibitem[RW10]{MR2865426}
	Jan Rataj and Steffen Winter.
	\newblock On volume and surface area of parallel sets.
	\newblock {\em Indiana Univ. Math. J.}, 59(5):1661--1685, 2010.
	
	\bibitem[RZ12]{MR2954647}
	Jan Rataj and Lud\v{e}k Zaj\'{i}\v{c}ek.
	\newblock Critical values and level sets of distance functions in {R}iemannian,
	{A}lexandrov and {M}inkowski spaces.
	\newblock {\em Houston J. Math.}, 38(2):445--467, 2012.
	
	\bibitem[{San}17]{2017arXiv170107286S}
	M.~{Santilli}.
	\newblock {Rectifiability and approximate differentiability of higher order for
		sets}.
	\newblock {\em ArXiv e-prints}, January 2017. \newblock{To appear in \em Indiana Univ. Math. J.}
	
	\bibitem[Sta79]{MR534512}
	L.~L. Stach\'o.
	\newblock On curvature measures.
	\newblock {\em Acta Sci. Math. (Szeged)}, 41(1-2):191--207, 1979.
	
	\bibitem[Wal76]{MR0417984}
	Rolf Walter.
	\newblock Some analytical properties of geodesically convex sets.
	\newblock {\em Abh. Math. Sem. Univ. Hamburg}, 45:263--282, 1976.
	
	\bibitem[Z{\"a}h86]{MR849863}
	M.~Z{\"a}hle.
	\newblock Integral and current representation of {F}ederer's curvature
	measures.
	\newblock {\em Arch. Math. (Basel)}, 46(6):557--567, 1986.
	
\end{thebibliography}

\end{document}